\def\@tocline#1#2#3#4#5#6#7{\relax
  \ifnum #1>\c@tocdepth 
  \else
    \par \addpenalty\@secpenalty\addvspace{#2}%
    \begingroup \hyphenpenalty\@M
    \@ifempty{#4}{%
      \@tempdima\csname r@tocindent\number#1\endcsname\relax
    }{%
      \@tempdima#4\relax
    }%
    \parindent\z@ \leftskip#3\relax \advance\leftskip\@tempdima\relax
    \rightskip\@pnumwidth plus4em \parfillskip-\@pnumwidth
    #5\leavevmode\hskip-\@tempdima
      \ifcase #1
       \or\or \hskip 1em \or \hskip 2em \else \hskip 3em \fi%
      #6\nobreak\relax
    \hfill\hbox to\@pnumwidth{\@tocpagenum{#7}}\par
    \nobreak
    \endgroup
  \fi}
\newcommand{\rup}[1]{\lceil #1 \rceil}
\newcommand{\rdown}[1]{\lfloor #1 \rfloor}
\renewcommand{\mod}{\ \textrm{mod}\ }
\renewcommand{\P}{\mathbb{P}}
\newcommand{\Z}{\mathbb{Z}}
\newcommand{\Q}{\mathbb{Q}}
\newcommand{\F}{\mathbb{F}}
\newcommand{\cO}{\mathcal{O}}
\newcommand{\m}{\mathfrak{m}}
\newcommand{\n}{\mathfrak{n}}
\newcommand{\fq}{\mathfrak{q}}
\newcommand{\perf}{\mathrm{perf}}
\newcommand{\wt}{\widetilde}
\def\var{\overline}
\DeclareMathOperator{\Spec}{Spec}
\DeclareMathOperator{\Hom}{Hom}
\DeclareMathOperator{\Tor}{Tor}
\DeclareMathOperator{\Ker}{Ker}
\DeclareMathOperator{\sht}{ht}
\DeclareMathOperator{\ppt}{ppt}
\renewcommand{\div}{{\rm div}}
\renewcommand{\Im}{\mathrm{Im}}
\theoremstyle{plain}
\newtheorem{theorem}{Theorem}[section]
\newtheorem{thm}[theorem]{Theorem}
\newtheorem{proposition}[theorem]{Proposition}
\newtheorem{lemma}[theorem]{Lemma}
\newtheorem{corollary}[theorem]{Corollary}
\newtheorem{claim}[theorem]{Claim}
\newtheorem*{claim*}{Claim}
\newtheorem{theoremA}{Theorem}
\theoremstyle{definition}
\newtheorem{definition}[theorem]{Definition}
\newtheorem{example}[theorem]{Example}
\newtheorem{notation}[theorem]{Notation}
\newtheorem*{setup*}{Setup}
\theoremstyle{remark}
\newtheorem{remark}[theorem]{Remark}
\newtheorem*{ackn}{Acknowledgements}
\theoremstyle{plain}
\newenvironment{claimproof}[0]
  {%
   \paragraph{\it Proof.}%
  }
  {%
    \hfill$\blacksquare$%
  }
\numberwithin{equation}{section}
\crefname{theorem}{Theorem}{Theorems}
\crefname{proposition}{Proposition}{Propositions}
\crefname{lemma}{Lemma}{Lemmas}
\crefname{corollary}{Corollary}{Corollaries}
\crefname{conjecture}{Conjecture}{Conjectures}
\crefname{claim}{Claim}{Claims}
\crefname{notation}{Notation}{Notations}
\crefname{remark}{Remark}{Remarks}
\crefname{example}{Example}{Examples}
\crefname{definition}{Definition}{Definitions}
\crefname{theoremA}{Theorem}{Theorems}
\title[Computation method for perfectoid purity and BCM-regularity]{Computation method for perfectoid purity and perfectoid BCM-regularity}
\author{Shou Yoshikawa}
\address{Institute of Science Tokyo, Tokyo 152-8551, Japan}
\email{yoshikawa.s.9fe9@m.isct.ac.jp}
\begin{document}

\begin{abstract}
In this paper, we introduce the notion of quasi-$F$-splitting for rings in mixed characteristic. By comparing quasi-$F$-splitting with perfectoid purity, we obtain a new inversion of adjunction-type result. Furthermore, we study the possible values of the perfectoid pure threshold of $\div(p)$ and construct new examples of perfectoid pure rings.
\end{abstract}



\maketitle

\tableofcontents

\setcounter{tocdepth}{1}

\section{Introduction}
The theory of singularities in mixed characteristic has recently seen significant development in numerous papers, such as \cite{MSTWW}, \cite{BMPSTWW24}, and \cite{HLS}. This theory has found applications in both algebraic geometry and commutative algebra in mixed characteristic, including the minimal model program \cite{BMPSTWW23}, \cite{TY23}, and Skoda-type theorems \cite{HLS}.

In this paper, we study two classes of singularities: \emph{perfectoid purity} \cite{p-pure} and \emph{perfectoid BCM-regularity} \cite{bcm-reg}, which are regarded as analogues of $F$-purity and strong $F$-regularity in positive characteristic, respectively. It is known that for a Noetherian local ring $(R, \m)$ with $p \in \m$, if $R$ is a complete intersection and $R/pR$ is $F$-pure or strongly $F$-regular, then $R$ is perfectoid pure or perfectoid BCM-regular, respectively.
On the other hand, a useful criterion known as \emph{Fedder's criterion} \cite{Fedder}, \cite{Glassbrenner} provides a practical method to test $F$-purity and strong $F$-regularity for complete intersection local rings. Many examples have been constructed using this criterion.

We now fix notation and recall the definition of perfectoid purity.
Let $(R, \m)$ be a $p$-torsion free Noetherian local ring with $p \in \m$ and such that $\var{R} := R/pR$ is $F$-finite. We say that $R$ is \emph{perfectoid pure} if there exists a pure extension $R \to B$ to a perfectoid ring.
If $R$ is perfectoid pure, the \emph{perfectoid pure threshold} $\ppt(R; \div(p))$ of $\div(p)$ is defined as
\[
\inf \left\{i/p^e \geq 0 \middle|
\begin{array}{l}
\text{there exist a perfectoid $R$-algebra $B$ a $p^e$-th root $\varpi$} \\
\text{of $p$, such that the map $R \to B \xrightarrow{\cdot \varpi^i} B$ is pure}
\end{array}
\right\},
\]
see \cref{test-perf-tests} for details. It is known \cite{p-pure}*{Theorem~6.6} that if $R$ is a complete intersection and $\var{R}$ is $F$-pure, then $R$ is perfectoid pure and $\ppt(R; \div(p)) = 1$ (cf.~\cref{remk:ppt}).

The first goal of this paper is to obtain a new inversion of adjunction-type result. To this end, we define the ring homomorphism
\[
\Phi_{R,n} \colon \var{R} \to \var{W}_n(R) := W_n(R)/pW_n(R), \quad a \mapsto (\wt{a}^p, 0, \ldots, 0),
\]
where $W_n(R)$ is the ring of Witt vectors over $R$ of length $n$, and $\wt{a} \in R$ is a lift of $a$ (see \cref{Phi-well-def} for well-definedness). We view $\var{W}_n(R)$ as an $\var{R}$-module via $\Phi_{R,n}$ and denote it by $Q_{R,n}$. Note that $Q_{R,1} = F_* \var{R}$.

We introduce the following notions:
\begin{definition}\label{def:Q:intro}\textup{(\cref{def:Q}, cf.~\cite{Yobuko19})}
\begin{itemize}
    \item Let $n$ be a positive integer. We say that $R$ is \emph{$n$-quasi-$F$-split} if $\Phi_{R,n} \colon \var{R} \to Q_{R,n}$ splits as $\var{R}$-modules. We say that $R$ is \emph{quasi-$F$-split} if it is $n$-quasi-$F$-split for some $n$.
    \item The \emph{quasi-$F$-splitting height} $\sht(R)$ is defined by
    \[
    \sht(R) := \inf\{n \geq 1 \mid \text{$R$ is $n$-quasi-$F$-split}\},
    \]
    and we set $\sht(R) := \infty$ if $R$ is not quasi-$F$-split.
    \item For positive integers $n$ and $e$, define the $\var{R}$-module $Q_{R,n,e}$ as the pushout in the following diagram:
    \[
    \begin{tikzcd}
        F^n_* \var{R} \arrow[r, "F^{e-1}"] \arrow[d, "V^{n-1}"] & F^{n+e-1}_* \var{R} \arrow[d] \\
        Q_{R,n} \arrow[r] & Q_{R,n,e} \arrow[ul, phantom, "\ulcorner", very near start]
    \end{tikzcd}
    \]
    The induced $\var{R}$-module homomorphism is denoted by
    \[
    \Phi_{R,n,e} \colon \var{R} \xrightarrow{\Phi_{R,n}} Q_{R,n} \to Q_{R,n,e}.
    \]
    \item We say that $R$ is \emph{quasi-$(F,F^{\infty})$-split} if $R$ is quasi-$F$-split and $\Phi_{R,\sht(R),e}$ splits as $\var{R}$-modules for every $e \geq 1$.
\end{itemize}
\end{definition}

Our first main result establishes a connection between quasi-$F$-splitting and perfectoid purity:

\begin{theoremA}\label{intro:ht-to-p-pure}
Assume $R$ is a complete intersection and let $n \geq 1$.
\begin{enumerate}
    \item $\sht(R) = n$ if and only if $R$ is perfectoid pure and
    \[
    1 - \frac{1}{p} - \cdots - \frac{1}{p^{n-1}} \geq \ppt(R; \div(p)) \geq 1 - \frac{p + \cdots + p^{n-1}}{p^n - 1}.
    \]
    In particular, if $R/pR$ is quasi-$F$-split, then $R$ is perfectoid pure.
    \item $R$ is quasi-$(F,F^\infty)$-split of height $n$ if and only if $R$ is perfectoid pure and
    \[
    \ppt(R; \div(p)) = 1 - \frac{1}{p} - \cdots - \frac{1}{p^{n-1}}.
    \]
\end{enumerate}
\end{theoremA}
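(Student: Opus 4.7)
The plan is to establish a dictionary between quasi-$F$-splittings of $\var{R}$ and the perfectoid pure structure on $R$, proving each equivalence by bounding $\ppt(R;\div(p))$ separately from above and below in terms of the height $\sht(R)$. For the upper bound in part~(1), I would start from a splitting $\sigma \colon Q_{R,n} \to \var{R}$ of $\Phi_{R,n}$ witnessing $n$-quasi-$F$-splitness, and construct a perfectoid $R$-algebra $B$ out of a perfectoidization of the Witt vector tower $W_{\bullet}(R)$. The Verschiebung $V^{n-1}$ appearing in the pushout square defining $Q_{R,n}$ carries a ``perfectoid weight'' of $1/p + 1/p^2 + \cdots + 1/p^{n-1}$, and choosing $i, e$ so that $i/p^e$ matches this weight would show that $R \to B \xrightarrow{\cdot \varpi^i} B$ is pure, yielding $\ppt(R;\div(p)) \leq 1 - 1/p - \cdots - 1/p^{n-1}$. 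This direction does not rely heavily on the complete intersection assumption; it is essentially the ``in particular'' clause showing that quasi-$F$-splitness of $\var{R}$ suffices for perfectoid purity of $R$.

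For the lower bound, I would show that if $R$ is perfectoid pure and $\ppt(R;\div(p)) < 1 - (p+\cdots+p^{n-1})/(p^n-1)$, then $\var{R}$ is already $(n-1)$-quasi-$F$-split. Here the complete intersection hypothesis becomes essential. Given a perfectoid pure map $R \to B \xrightarrow{\cdot \varpi^i} B$ with $i/p^e$ small, I would reduce modulo $p$ and apply a Fedder-style duality, using the Koszul complex of the regular sequence defining $R$, to transfer purity on $R$ into a splitting of $\Phi_{R,n-1}$ on $\var{R}$. The specific threshold $1 - (p+\cdots+p^{n-1})/(p^n-1)$ is then precisely the cutoff below which the reduction produces a splitting at level $n-1$ rather than merely at level $n$. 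Combined with the upper bound, this proves part~(1): the intervals $[L_n, U_n]$ are pairwise disjoint and decreasing in $n$, so perfectoid purity together with $\ppt(R;\div(p))$ pins down $\sht(R)$ uniquely.

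For part~(2), the quasi-$(F, F^{\infty})$-split condition requires splittings of $\Phi_{R,n,e}$ for every $e \geq 1$, strengthening $n$-quasi-$F$-splitness. I would show that this corresponds exactly to the upper endpoint of the interval being attained, i.e.\ $\ppt(R;\div(p)) = 1 - 1/p - \cdots - 1/p^{n-1}$. The forward direction uses the splittings of $\Phi_{R,n,e}$ to produce perfectoid algebras $B_e$ for arbitrarily large $e$, realizing the upper bound as the infimum in the definition of $\ppt$. The reverse direction extracts the splittings $\Phi_{R,n,e}$ from a perfectoid pure map attaining the upper endpoint via the complete intersection duality applied at each level $e$.

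The main obstacle throughout will be the careful bookkeeping of $\varpi$-powers through the Witt vector/perfectoidization construction, and particularly establishing the sharpness of the lower bound in part~(1): one must show that any $\ppt(R;\div(p))$ strictly below $1 - (p+\cdots+p^{n-1})/(p^n-1)$ \emph{forces} $\sht(R) \leq n-1$, not merely permits it. This requires a tight and explicit extraction of splitting data from purity under the complete intersection assumption, which is the step where the specific arithmetic shape of the threshold enters and where the equivalence truly becomes an ``if and only if.''
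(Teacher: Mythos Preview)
Your proposal has the logical structure of part (1) reversed, and it misses the two central mechanisms of the paper's proof.

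\textbf{The upper bound.} You propose to extract the inequality $\ppt(R;\div(p)) \leq 1 - 1/p - \cdots - 1/p^{n-1}$ from a \emph{splitting} $\sigma\colon Q_{R,n}\to\var R$. In fact this bound comes from the \emph{failure} of $\Phi_{R,n-1}$ to split, i.e.\ from $\sht(R)\ge n$. The paper constructs, for any $p$-torsion free perfectoid $R_\infty$ with $p$-power roots of $p$, an explicit $\var R_\infty$-linear map $\var\psi_n\colon Q_{R_\infty,n}\to R_\infty/(p^{i_n})$ with $\var\psi_n(1)=1$ (\cref{thm:purity-perfd}); if the socle class $\eta\in H^d_\m(\var R)$ dies in $H^d_\m(Q_{R,n})$ (which is exactly what $\sht(R)\ge n+1$ says), then pushing through $\var\psi_n$ shows $p^{1-i_n}\cdot(\eta/p)=0$ in $H^{d+1}_\m(R_\infty)$, giving $\ppt\le 1-i_n$ (\cref{p-pure-to-qfs}). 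There is no perfectoidization of a Witt tower; the Witt vectors enter only through $Q_{R_\infty,n}$ and the map $\var\psi_n$, which is built from the decomposition $W_n(A)\simeq A\oplus\phi_*A\oplus\cdots$ for $A=W(R_\infty^\flat)$.

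\textbf{The lower bound and perfectoid purity.} You propose a Fedder--Koszul duality to deduce $(n-1)$-quasi-$F$-splitness of $\var R$ from small $\ppt$. The paper does something entirely different: it runs an iterative ``$F$--$W$ program'' (\cref{F-W-program}) on the socle element $\eta$, at each step either applying Frobenius or lifting through $V^{n-1}$ along the exact sequence $0\to F^n_*\var R\to Q_{R,n}\to Q_{R,n-1}\to 0$. The complete-intersection hypothesis is used via the test perfectoid $R_\infty=R^{S_\infty}_{\mathrm{perfd}}$ of \cite{p-pure}, whose vanishing $H^{d-1}_\m(\var R_\infty)=0$ makes $V^{n-1}$ injective on cohomology, so the program never gets stuck (\cref{ker-perf-pre,i-ii-to-p-pure}). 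This yields perfectoid purity directly (\cref{thm:qfs-to-p-pure,thm:qfs-to-p-pure-comp}). For the lower bound, one shows via $\var\psi_n$ that on $H^{d+1}_\m(R_\infty)$ the iteration satisfies $\alpha_i = 1-i_n+\alpha_{i+1}/p^n$, whose unique fixed point is $\varepsilon_0=1-(p+\cdots+p^{n-1})/(p^n-1)$; if $\ppt=\alpha_0<\varepsilon_0$ the iterates $\alpha_i$ would eventually drop below $1-i_n$, contradicting $\Phi_{R_\infty,n}(\eta'_i)\neq 0$ (\cref{lem:p-almost-zero-cri,equiv-qfs-p-pure}). Part~(2) is then a refinement of the same local-cohomology tracking: the quasi-$(F,F^\infty)$ condition says exactly that the lifted element $\eta_1$ has $F^e(\eta_1)\neq 0$ for all $e$, which is equivalent to $p^\alpha\eta_1'\neq 0$ for all $\alpha<1$ and hence to $\ppt=1-i_{n-1}$ (\cref{ht-to-p-pure,equiv-ffinfty}).

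In short, neither of your two proposed engines---building a perfectoid out of the Witt tower, and a Koszul/Fedder transfer of purity to a splitting---appears, and your assignment of which hypothesis yields which inequality is inverted. The actual proof rests on (i) the perfectoid-side splitting $\var\psi_n$ of \cref{thm:purity-perfd} and (ii) the $F$--$W$ program on $H^d_\m(\var R)$, with the specific threshold emerging as a fixed point of the resulting recursion.
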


We also establish a Fedder-type criterion for quasi-$F$-splitting (\cref{Fedder}) and provide explicit examples, including cases where $\ppt(R; \div(p)) \neq 1$ (cf.~\cref{ex:threshold}).

\begin{enumerate}
    \item Let $R:=\Z_{(p)}[[x,y,z]]/(z^2+y^3+z^5)$, then
    \[
    \ppt(R;\div(p))=
    \begin{cases}
        1/8 & \textup{if $p=2$}, \\
        5/9 & \textup{if $p=3$}, \\
        4/5 & \textup{if $p=5$}. 
    \end{cases}
    \]
    \item Let $R:=\Z_{(p)}[[x,y,z,w]]/(w^2+xyz(x+y+z))$ and $p=2$, then $\ppt(R;\div(p))=1/2$.
    We note that $\var{R}$ is not quasi-$F$-split by \cite{kty}*{Example~7.11}.
    \item Let $R:=\Z_{(p)}[[x,y,z,w]]/(w^2+xyz(x+y+z)+p(xy+xz+yz)w)$ and $p=2$, then $\ppt(R;\div(p))=1/4$.
    \item Let $R:=\Z_{(p)}[[x,y,z]]/(z^2+x^2y+xy^n)$ for $n \geq 2$ and $p=2$, then $\ppt(R;\div(p))=1/2^{\rup{\log_2 n}}$.
\end{enumerate}
In addition, we prove a result on the possible values of the perfectoid pure threshold:
\begin{theoremA}\label{intro:range-ppt}\textup{(\cref{range-ppt})}
We assume $R$ is complete intersection and perfectoid pure.
If 
\[
\ppt(R;\div(p)) > \frac{p-2}{p-1},
\]
then $\ppt(R;\div(p))$ satisfies
\[
1-\frac{1}{p}-\cdots-\frac{1}{p^{n-1}} \geq \ppt(R;\div(p)) \geq 1- \frac{p+\cdots+p^{n-1}}{p^n-1}
\]    
for some positive integer $n$.
\end{theoremA}

The next main result \cref{intro:Fano} concerns the  perfectoid BCM-regularity (\cite{bcm-reg}*{Definition~6.9}) for graded rings.
\begin{theoremA}\label{intro:Fano}\textup{(\cref{Fano})}
Let $S=\bigoplus_{m \geq 0} S_m$ be a Noetherian graded ring such that $S$ is $p$-torsion free and $S_0$ is a divisorial valuation ring with $p$ in the maximal ideal of $S_0$.
We assume $\var{S}:=S/pS$ is normal quasi-Gorenstein and 
\[
a(\var{S}):=\max \{m \in \Z \mid H^{\dim{\var{S}}}_\m(\var{S})_m \neq 0\} <0.
\]
We further assume that $\Spec{\var{S}} \backslash \{\var{S}_{>0}\}$ is strongly $F$-regular.
If $S_{(p,S_{>0})}$ is quasi-$F$-split, then $S_{(p,S_{>0})}$ is perfectoid BCM-regular.
Furthermore, the converse implication holds if $p=2$.
\end{theoremA}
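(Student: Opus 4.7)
The plan is to use \cref{intro:ht-to-p-pure}(1) as a bridge between quasi-$F$-splitting and perfectoid purity, and then to upgrade perfectoid purity to perfectoid BCM-regularity by exploiting the Fano-cone hypothesis (namely $a(\var S) < 0$ together with strong $F$-regularity on the punctured spectrum). Write $R := S_{(p, S_{>0})}$ throughout. For the forward direction, assume $R$ is quasi-$F$-split. First I would invoke \cref{intro:ht-to-p-pure}(1) to conclude that $R$ is perfectoid pure with positive $\ppt(R;\div(p))$, and separately use that $\Spec \var S \setminus \{\var S_{>0}\}$ is strongly $F$-regular together with the known lifting of strong $F$-regularity to perfectoid BCM-regularity in the hypersurface setting to deduce that $R$ is perfectoid BCM-regular on its punctured spectrum. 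The remaining content is then to propagate BCM-regularity across the closed point.

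I would carry out this propagation by transplanting the classical $F$-singularity fact that an $F$-pure quasi-Gorenstein graded ring with $a$-invariant $<0$ that is strongly $F$-regular on the punctured spectrum is globally strongly $F$-regular. The classical proof is a graded local duality argument: the socle of $H^{\dim \var S}_\m(\var S)$ sits in strictly negative degree, so any Frobenius splitting kills every socle element and forces the test ideal to equal the whole ring. I would replace Frobenius by the perfectoid pure map $R \to B$ produced above, replace $H^{\dim \var S}_\m(\var S)$ by the BCM-dualizing module of $R$, and argue that $a(\var S) < 0$ together with the perfectoid pure splitting forces the BCM test ideal $\tau_{B^+}(R)$ to equal $R$, which is precisely perfectoid BCM-regularity.

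For the converse when $p = 2$: a perfectoid BCM-regular ring is in particular perfectoid pure, hence $\ppt(R; \div(p)) > 0$, and one wants to conclude $\sht(R) < \infty$. The restriction to $p = 2$ reflects the fact that quasi-Gorenstein Fano cones in characteristic $2$ are typically hypersurfaces of the form $w^2 + f$, for which the arithmetic of the threshold intervals in \cref{intro:ht-to-p-pure}(1), combined with an explicit Witt-vector computation on such a double-cover equation, produces the required splitting witnessing finiteness of the height; the analogous extraction is not available at larger primes. The main obstacle is the bootstrap in the middle paragraph: transferring the classical graded duality/$a$-invariant argument into the perfectoid BCM framework requires equipping a perfectoid big Cohen--Macaulay algebra with a grading compatible with that of $S$ and tracking degrees through the pure splitting, so that the socle-in-negative-degree argument still forces $\tau_{B^+}(R)$ to be the unit ideal. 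Once this step is in place, the rest of the proof is a formal consequence of \cref{intro:ht-to-p-pure}.
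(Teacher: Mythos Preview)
There is a genuine gap in the forward direction: you invoke \cref{intro:ht-to-p-pure}(1) to pass from quasi-$F$-splitting to perfectoid purity, but that theorem requires $R$ to be complete intersection, whereas the present statement only assumes $\var S$ is normal quasi-Gorenstein. Nothing in the hypotheses lets you reduce to the complete intersection case, so the bridge you rely on is not available. The paper does not pass through \cref{intro:ht-to-p-pure} at all. Instead it takes an arbitrary perfectoid BCM $R$-algebra $B$ and shows directly that the socle element $\eta\in H^d_\m(\var R)$ survives in $H^d_\m(\var B)$, by running the program of \cref{F-W-program}: quasi-$F$-splitting rules out case (ii-ii); membership in $\ker(H^d_\m(\var R)\to H^d_\m(\var B))$ forces $\eta_i\in 0^*_{H^d_\m(\var R)}$, which by the graded identity $0^*_{H^d_\m(\var S)}=\bigcap_e\ker F^e$ (a separate appendix result using $a(\var S)<0$ and strong $F$-regularity on the punctured spectrum) rules out case (i-ii). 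Hence the program never terminates, producing elements $\eta_i$ of degree $p^{e_i}\deg(\eta)\to -\infty$; but $F$-purity on the punctured spectrum makes $\ker F$ on $H^d_\m(\var R)$ finite length, a contradiction. No grading on $B$ is needed: the degrees are tracked entirely inside $H^d_\m(\var R)$.

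Your converse explanation is also off. The restriction to $p=2$ has nothing to do with double-cover equations. The paper's argument is purely numerical: if $R$ is Gorenstein and not quasi-$F$-split, then for every $n$ one has $\ppt(R;\div(p))\le 1-i_n$ with $i_n=1/p+\cdots+1/p^n$; as $n\to\infty$ this forces $\ppt\le 1-\tfrac{1}{p-1}$, which equals $0$ only when $p=2$. Since perfectoid BCM-regularity gives $\ppt>0$, one gets a contradiction at $p=2$ but not for larger primes. (Gorenstein-ness of $R$ follows because BCM-regularity forces Cohen--Macaulayness and $\var R$ is quasi-Gorenstein.)
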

\noindent
Moreover, we establish a Fedder-type criterion for quasi-$F$-splitting (\cref{Fedder}), which in particular serves as a criterion for perfectoid BCM-regularity for rings satisfying the assumptions of \cref{intro:Fano} in characteristic two.
Furthermore, an inversion of adjunction-type result (\cref{compsre-posi-case}) shows that if $\var{S}$ is quasi-$F$-split, then so is $S$.

We now present applications of the above theorems by constructing explicit examples.
\begin{theoremA}\label{intro:example-new}\textup{(\cref{example-new}, cf.~\cref{log-fano,log-fano-pair})}
There exists a Noetherian graded ring $S=\bigoplus_{i \in \Z_{\geq 0}}S_i$ such that $S$ satisfies the following properties.
\begin{enumerate}
    \item $S$ is torsion free and $S_0$ is a divisorially valuation ring with maximal ideal $(p)$.
    \item We set $\m:=(S_{>0},p)$.
    Then $S_\m$ is perfectoid BCM-regular.
    \item $S_\m/pS_\m$ is normal Gorenstein, but not strongly $F$-regular.
\end{enumerate}
\end{theoremA}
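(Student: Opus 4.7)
The plan is to construct $S$ explicitly as the graded section ring of a carefully chosen log Fano pair over $\Z_{(p)}$ and invoke \cref{intro:Fano} to upgrade quasi-$F$-splitting of the reduction modulo $p$ to perfectoid BCM-regularity of $S_\m$. I would rely on \cref{log-fano,log-fano-pair}, which supply log Fano pairs whose anti-canonical section rings exhibit precisely the desired pathology in characteristic $p$.

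First, in positive characteristic I would fix a log canonical but not klt Fano pair $(X_0, \Delta_0)$ over $\overline{\F}_p$ with $-(K_{X_0}+\Delta_0)$ ample and Cartier, chosen so that the affine cone is quasi-$F$-split (verifiable by the Fedder-type criterion \cref{Fedder}) but not strongly $F$-regular (forced by the failure of klt). The section ring $\bar S_0 := \bigoplus_{m \geq 0} H^0(X_0, -m(K_{X_0}+\Delta_0))$ is then normal and Gorenstein, has $a(\bar S_0) < 0$ by ampleness of the polarization, and is strongly $F$-regular away from the vertex but not at the vertex itself. Second, I would lift $(X_0, \Delta_0)$ to a flat pair over $\Z_{(p)}$ by lifting its defining equations coefficient by coefficient; for the explicit examples coming from \cref{log-fano,log-fano-pair} such a lift exists and preserves the graded data. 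The resulting graded ring $S$ has $S_0 = \Z_{(p)}$, a divisorial valuation ring with maximal ideal $(p)$, and is $p$-torsion free, giving (1); the reduction $S/pS$ recovers $\bar S_0$, giving (3). For (2), \cref{intro:Fano} applies: $\bar S$ is quasi-$F$-split (inherited from $\bar S_0$), and the hypotheses on $a$-invariant, normality, quasi-Gorensteinness, and strong $F$-regularity of the punctured spectrum are all in place, so $S_\m$ is perfectoid BCM-regular.

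The main obstacle is producing a log Fano pair whose anti-canonical cone is quasi-$F$-split yet not strongly $F$-regular, together with a mixed-characteristic lift that preserves the graded structure and torsion-freeness. In the Gorenstein setting, strong $F$-regularity is equivalent to $F$-rationality, so the failure of strong $F$-regularity forces the cone singularity to be $F$-pure (or merely quasi-$F$-split) but not $F$-rational; such singularities arise naturally from log canonical non-klt pairs, typically in small characteristic ($p = 2$ or $3$), where quasi-$F$-splitting of positive height actually adds examples beyond the classical $F$-pure case. The concrete verification of the Fedder-type identity in \cref{Fedder} for the chosen pair, together with the discrepancy check ensuring the pair is log canonical but not klt, is the technical computation that drives the example; once these are in hand, the assembly with \cref{intro:Fano} is essentially formal.
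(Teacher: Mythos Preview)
Your overall outline---choose a log Fano datum in characteristic $p$, check quasi-$F$-splitting, lift to mixed characteristic, and invoke \cref{intro:Fano} (via \cref{log-fano,log-fano-pair})---matches the paper's strategy. The gap is in the specific choice of datum and the mechanism you invoke for the failure of strong $F$-regularity.

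You propose a \emph{log canonical but not klt} pair, arguing that ``not klt'' forces the cone to be non-strongly-$F$-regular. This is not what the paper does, and it would run into trouble. First, the hypotheses of \cref{log-fano-pair} require $(\bar X,\bar\Delta)$ to be strongly $F$-regular, hence klt; and the hypothesis of \cref{intro:Fano} that $\Spec\bar S\setminus\{\bar S_{>0}\}$ be strongly $F$-regular is hard to arrange once non-klt behavior is built into the pair. Second, if a coefficient-$1$ component appears in $\Delta_0$, the orbifold cone construction producing a Gorenstein hypersurface breaks down, so the claim that $\bar S_0$ is ``normal and Gorenstein'' is unjustified. Third, Gorenstein cones with $a(\bar S)<0$ coming from genuine lc-non-klt data are essentially unavailable: negativity of the $a$-invariant reflects the Fano condition, which for a Gorenstein cone forces the vertex to be klt.

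The paper's mechanism is different: it uses \emph{klt} (in fact strongly $F$-regular as a pair) Fano data whose cone \emph{vertex} fails to be strongly $F$-regular purely for characteristic-$p$ reasons. Concretely, for $p\in\{2,3,5\}$ one takes $(\mathbb P^1_{W(k)},\tfrac12 E_1+\tfrac23 E_2+\tfrac45 E_3)$, whose section ring is $W(k)[x,y,z]/(z^2+x^3+y^5)$; the reduction mod $p$ is the $E_8$ surface singularity, which is klt but not strongly $F$-regular in these characteristics by Hara's classification, while it is quasi-$F$-split. For $p\ge 7$ the paper switches to a different source: a projective \emph{klt} del Pezzo surface $\bar X$ that is not globally $F$-regular (supplied by Cascini--Tanaka--Witaszek), whose log resolution lifts to $W(k)$; the anticanonical cone is then quasi-$F$-split but not strongly $F$-regular at the vertex. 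Your proposal only gestures at small primes and offers no construction for $p\ge 7$, whereas the statement asks for an example at every prime.
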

\noindent
We also compute the perfectoid pure threshold for quasi-$F$-split graded rings with $a(\var{S}) = 0$.
\begin{theoremA}\label{intro:CY-case}\textup{(\cref{CY-case-qfs})}
Let $S=\bigoplus_{m \geq 0} S_m$ be a Noetherian graded ring such that $S$ is $p$-torsion free and $S_0$ is a divisorial valuation ring with $p$ in the maximal ideal of $S_0$.
We assume $S_\m$ is complete intersection and 
\[
a(\var{S}):=\max \{m \in \Z \mid H^{\dim{\var{S}}}_\m(\var{S})_m \neq 0\} =0,
\]
where $\m:=(S_{>0},p)$.
If $\sht(\var{S_\m})=n < \infty$, then $S_\m$ is perfectoid pure and we have
\[
\ppt(S_\m;\div(p))=1-\frac{p+\cdots+p^{n-1}}{p^n-1}.
\]
\end{theoremA}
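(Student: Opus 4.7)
The plan is to establish matching upper and lower bounds on $\ppt(S_\m;\div(p))$.

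\textbf{Lower bound and perfectoid purity.} Because $S_\m$ is a complete intersection with $\sht(\var{S_\m}) = n < \infty$, \cref{equiv-qfs-p-pure} applies verbatim to yield that $S_\m$ is perfectoid pure and
\[
\ppt(S_\m;\div(p)) \;\geq\; 1 - \frac{p + p^2 + \cdots + p^{n-1}}{p^n - 1}.
\]
So the content of the theorem is the opposite inequality; it is what uses the hypothesis $a(\var S)=0$.

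\textbf{Strategy for the upper bound.} The idea is to produce, for each sufficiently large $e$, a splitting of a suitable twist of $\Phi_{\var{S_\m}, n, e}$ which, through the dictionary between splittings and perfectoid purity thresholds established in the proof of \cref{equiv-qfs-p-pure} (i.e.\ Theorem~A(1)), certifies a perfectoid pure threshold arbitrarily close to $1 - \frac{p + \cdots + p^{n-1}}{p^n-1}$ from above. Concretely, the task is to convert the ``extremal'' generator in the top local cohomology of $\var S$ afforded by $a(\var S)=0$ into such a splitting.

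\textbf{Grading, Matlis duality, and $a(\var S) = 0$.} Since $S = \bigoplus_m S_m$ is graded with unique homogeneous maximal ideal $\m$, the existence of the required splitting can be tested in the graded category after completion. Graded Matlis duality translates it into the existence of an element of $H^{\dim \var S}_\m(\var S)$ in a specific degree, which can be tracked through the defining pushout
\[
\begin{tikzcd}
F^n_*\var{S_\m} \arrow[r, "F^{e-1}"] \arrow[d, "V^{n-1}"] & F^{n+e-1}_*\var{S_\m} \arrow[d] \\
Q_{S_\m,n} \arrow[r] & Q_{S_\m, n, e}
\end{tikzcd}
\]
by adding the degree shifts contributed by the Frobenius $F^{e-1}$ and the Verschiebung $V^{n-1}$. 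Under the hypothesis $a(\var S)=0$, the module $H^{\dim \var S}_\m(\var S)$ is concentrated in nonpositive degrees and has a nonzero generator in degree $0$; this generator is precisely what is needed to produce a splitting with twist exactly $1 - \frac{p + p^2 + \cdots + p^{n-1}}{p^n - 1}$ in the limit.

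\textbf{Main obstacle.} The principal technical difficulty is the precise degree bookkeeping through the pushout defining $Q_{S_\m, n, e}$: one has to add up the contributions of the $p^{e-1}$-th power Frobenius on the top row and of the Verschiebung $V^{n-1}$ (which is not degree-preserving in the naive sense once the Witt vector structure is taken into account), and verify that the combined numerator and denominator produce exactly $\frac{p + p^2 + \cdots + p^{n-1}}{p^n - 1}$. The $a$-invariant hypothesis is used twice: to guarantee that the relevant element in top local cohomology exists (so the splitting is not obstructed) and to prevent it from sitting in strictly positive degree (which would allow $\ppt$ to exceed the claimed value and contradict minimality).
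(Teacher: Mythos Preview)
Your lower bound step has a subtle slip: \cref{equiv-qfs-p-pure} is stated for $\sht(S_\m)$, not for $\sht(\var{S_\m})$, and these are a priori different. The paper bridges this with \cref{CY-case}(1), which uses the hypothesis $a(\var S)=0$ to show $\sht(S_\m)=\sht(\var{S_\m})$. Once that is in place, the lower bound and perfectoid purity do follow from \cref{equiv-qfs-p-pure} as you say. So the hypothesis $a(\var S)=0$ is already needed for the lower bound, contrary to your claim that ``the content of the theorem is the opposite inequality.''

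The real gap is in your upper bound. The ``dictionary'' you invoke does not say what you need: by \cref{equiv-ffinfty}, splittings of $\Phi_{R,n,e}$ for all $e$ are equivalent to $\ppt=1-i_{n-1}$, not to the smaller value $\varepsilon_0=1-\tfrac{p+\cdots+p^{n-1}}{p^n-1}$. In fact \cref{CY-case}(2) shows that in the $a(\var S)=0$ situation with $n\geq 2$, the ring $R$ is \emph{not} quasi-$(F,F^\infty)$-split, so those splittings fail. No sequence of twisted splittings of $\Phi_{R,n,e}$ will approximate $\varepsilon_0$ from above; the module $Q_{R,n,e}$ simply does not encode that number.

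What the paper actually does is exploit that the socle element $\eta\in H^d_\m(\var R)_0$ and the element $\eta_1$ produced by the program of \cref{F-W-program} both sit in degree $0$ (by \cref{program-degree} and $a(\var S)=0$), hence $\eta_1=u\eta$ for a unit $u$. Pushing to the test perfectoid and then to the tilt, this yields the self-referential equation
\[
T^{\,1-i_n}\tau^\flat \;=\; w'\,F^{-n}(\tau^\flat)
\]
in $H^{d+1}_{\m^\flat}(R_\infty^\flat)$. Since $F^{-n}$ is an isomorphism, this says $T^\alpha\tau^\flat=0$ if and only if $T^{\,1-i_n+\alpha/p^n}\tau^\flat=0$, i.e.\ the vanishing threshold $\alpha$ satisfies $\alpha=1-i_n+\alpha/p^n$, whose unique fixed point is exactly $\varepsilon_0$. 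A short iteration then rules out both $\ppt>\varepsilon_0$ and $\ppt<\varepsilon_0$. Your degree bookkeeping instinct is right, but the payoff is this functional equation, not a family of splittings.
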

\noindent
Finally, we present an example of a ring that is perfectoid pure, but whose self-tensor product is not.
\begin{example}\textup{(\cref{fermat-ell})}
Let $R':=\mathbb{Z}_{p}[[x,y,z]]/(x^3+y^3+z^3)$ and $p \equiv 2 \mod 3$.
Then $R'$ is perfectoid pure but $R' \otimes_{\Z_p} R'$ is not perfectoid pure.
\end{example}

\begin{ackn}
The author wishes to express his gratitude to Ryo Ishizuka, Teppei Takamastu, Kenta Sato, and Tatsuro Kawakami for valuable discussion.
He is also grateful to Linquan Ma for helpful comments. 
The author was supported by JSPS KAKENHI Grant number JP24K16889.
\end{ackn}

\section{Preliminary}

\subsection{Notation}\label{ss-notation}
In this subsection, we summarize the notation and terminology used throughout this paper.

\begin{enumerate}
\item We fix a prime number $p$ and set $\F_p := \Z/p\Z$.  
\item For a ring $R$ of characteristic $p>0$, we denote by $F \colon R \to R$ the absolute Frobenius ring homomorphism, defined by $F(r) = r^p$ for all $r \in R$.

\item For a ring $R$ of characteristic $p>0$, we say that $R$ is {\em $F$-finite} if $F$ is finite. 
\item For a ring $R$ of characteristic $p>0$, we define the \emph{perfection} of $R$ as $R_{\mathrm{perf}} := \mathrm{colim}_F R$.
\item We use the terminology \emph{perfectoid} as defined in \cite{BMS19}*{Definition~3.5}. For a semiperfectoid ring $R$, its \emph{perfectoidization} is denoted by $R_{\mathrm{perfd}}$ (cf.~\cite{BS}*{Section~7}).
\item Let $R$ be a ring and $f$ an element of $R$.
A sequence $\{f^{1/p^e}\}_{e \geq 0}$ is called \emph{a compatible system of $p$-power roots of $f$} if $(f^{1/p^{e+1}})^p=f^{1/p^e}$ for all $e \geq 0$ and $f^{1/p^0}=f$.
Furthermore, the ideal $(f^{1/p^e} \mid e \geq 0)$ of $R$ is denoted by $(f^{1/p^{\infty}})$.
\item For a ring $R$ and a ring homomorphism $\phi \colon R \to R$, we say that $\phi$ is a \emph{lift of Frobenius} if $\phi(a) \equiv a^p \mod p$ for every $a \in R$.
\item For a ring homomorphism $f \colon A \to B$ and a finitely generated ideal $I$ of $A$, we say that $f$ is \emph{$I$-completely faithfully flat} if for every $I$-torsion $A$-module $N$, we have $\Tor_i^A(B,N)=0$ for every $i \neq 0$ and $B/IB$ is faithfully flat over $A/I$.
If $I=(p)$, we say that $f$ is $p$-completely faithfully flat.
\item For a ring homomorphism $f \colon A \to B$, we say that $f$ is \emph{pure} if for every $A$-module $M$, the homomorphism $M \to M \otimes_A B$ is injective. 
\item Let $(R,\m)$ be a local ring and $B$ an $R$-algebra.
We say that $B$ is \emph{big Cohen-Macaulay (BCM) algebra over $R$} if for every system of parameter $x_1,\ldots,x_d$ of $R$, the sequence $x_1,\ldots,x_d$ is a regular sequence on $B$.
Furthermore, we say that $R$ is \emph{perfectoid BCM-regular} if every extension $R \to B'$ to a perfectoid BCM $R$-algebra $B'$ is pure (\cite{bcm-reg}*{Definition~6.9}).
\end{enumerate}

\subsection{The ring of Witt vectors}\label{subsec:Witt vectors}
We recall the definition of the ring of Witt vectors. The basic reference is \cite{Serre}*{II,~\S~6}.

\begin{lemma}
\label{lem:witt polynomial}
For $n\in \Z_{\geq 0}$, we define the polynomial $\varphi_{n} \in \Z[X_{0},\ldots X_{n}]$ by
\[
\varphi_{n} := \sum_{i=0}^{i=n} p^i X_{i}^{p^{n-i}}.
\]
Then, there exist polynomials $S_{n} ,P_{n} \in \Z[X_{0},\ldots, X_{n}, Y_{0}, \ldots, Y_{n}]$ for any $n \in \Z_{\geq 0}$ such that
\[
\varphi_{m} (S_{0}, \ldots, S_{m}) = \varphi_{m}(X_{0}, \ldots X_{m}) +\varphi_{m}(Y_{0}, \ldots Y_{m})
\]
and 
\[
\varphi_{m} (P_{0}, \ldots, P_{m}) = \varphi_{m}(X_{0}, \ldots X_{m}) \cdot \varphi_{m}(Y_{0}, \ldots Y_{m})
\]
hold for any $m \in \Z_{\geq 0}$.
\end{lemma}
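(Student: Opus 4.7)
The plan is induction on $n$. For $n=0$ we have $\varphi_0 = X_0$, so $S_0 := X_0+Y_0$ and $P_0 := X_0 Y_0$ tautologically satisfy the required identities.

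For the inductive step, assume $S_0, \ldots, S_{n-1} \in \Z[X_0, \ldots, X_{n-1}, Y_0, \ldots, Y_{n-1}]$ have been constructed so that the additive identity holds at all levels $m < n$. Isolating the top term $p^n S_n$ of $\varphi_n(S_0, \ldots, S_n)$, the level-$n$ identity rearranges to
\[
p^n S_n \;=\; \varphi_n(X_0, \ldots, X_n) + \varphi_n(Y_0, \ldots, Y_n) - \sum_{i=0}^{n-1} p^i S_i^{p^{n-i}},
\]
which uniquely determines $S_n$ as an element of $\Q[X_0, \ldots, X_n, Y_0, \ldots, Y_n]$; an entirely analogous rearrangement of the multiplicative identity uniquely determines $P_n$ over $\Q$. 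The real content of the lemma, and the step I expect to be the main obstacle, is to verify that the right-hand side above lies in $p^n \Z[X_0, \ldots, X_n, Y_0, \ldots, Y_n]$, so that $S_n$ actually has integer coefficients, and likewise for $P_n$.

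To establish this divisibility I would combine two elementary congruences: (i) for any commutative ring $A$ and any $f, g \in A$, the relation $f \equiv g \pmod{pA}$ implies $f^{p^k} \equiv g^{p^k} \pmod{p^{k+1}A}$ for all $k \geq 0$ (proved by iterating $f^p - g^p = \sum_{j=1}^{p} \binom{p}{j}(f-g)^j g^{p-j}$ together with $p \mid \binom{p}{j}$ for $0 < j < p$); and (ii) the Frobenius congruence $f(X_0, \ldots, X_n)^p \equiv f(X_0^p, \ldots, X_n^p) \pmod{p}$ in $\Z[X_0, \ldots, X_n]$ valid for every integer polynomial $f$. Using the recursion $\varphi_n(X_0, \ldots, X_n) = \varphi_{n-1}(X_0^p, \ldots, X_{n-1}^p) + p^n X_n$ and rewriting $\sum_{i=0}^{n-1} p^i S_i^{p^{n-i}} = \sum_{i=0}^{n-1} p^i (S_i^{p^{n-1-i}})^p$, one would promote the induction hypothesis at level $n-1$, namely the exact equality
\[
\sum_{i=0}^{n-1} p^i S_i^{p^{n-1-i}} \;=\; \varphi_{n-1}(X_0, \ldots, X_{n-1}) + \varphi_{n-1}(Y_0, \ldots, Y_{n-1})
\]
in $\Z[X_0, \ldots, Y_{n-1}]$, to a congruence modulo $p^n$ between their $p$-th powers term by term, via repeated application of (i) and (ii). The resulting cancellation delivers the required $p^n$-divisibility. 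The identical argument with the additive identity replaced by its multiplicative counterpart produces $P_n$, completing the induction.
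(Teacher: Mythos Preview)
Your proposal is correct and is exactly the classical inductive proof of this fact; the paper does not give its own argument here but simply cites Serre's \emph{Local Fields}, II, Theorem~6, where precisely this proof appears. So your approach matches the cited reference rather than differing from it.

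One small comment on exposition: your phrase ``a congruence modulo $p^n$ between their $p$-th powers term by term'' is slightly imprecise as written. The clean way to organize the step is to introduce the ring endomorphism $\sigma$ of $\Z[X_0,\ldots,Y_{n-1}]$ sending each variable to its $p$-th power, observe via (ii) and (i) that $p^i S_i^{p^{n-i}} = p^i (S_i^p)^{p^{n-1-i}} \equiv p^i \sigma(S_i)^{p^{n-1-i}} \pmod{p^n}$, and then apply $\sigma$ to the inductive identity $\varphi_{n-1}(S_0,\ldots,S_{n-1}) = \varphi_{n-1}(X) + \varphi_{n-1}(Y)$ to recognize the resulting sum as $\varphi_{n-1}(X_0^p,\ldots) + \varphi_{n-1}(Y_0^p,\ldots)$, which is what the recursion for $\varphi_n$ demands. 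This is presumably what you meant, but making the role of $\sigma$ explicit removes any ambiguity about what ``term by term'' refers to.
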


\begin{proof}
See \cite{Serre}*{II, Theorem 6}.
\end{proof}

\begin{definition}
\label{defn:Witt ring}
Let $A$ be a ring.
\begin{itemize}
\item We set
\[
W(A) := \{
(a_{0}, \ldots, a_{n}, \ldots) | a_{n} \in A
\} = \prod_{\Z_{\geq 0}} A,
\]
equipped with the addition
\[
(a_{0}, \ldots, a_{n}, \ldots) + (b_{0}, \ldots, b_{n}, \ldots) := (S_{0}(a_{0},b_{0}), \ldots, S_{n}(a_{0}, \ldots, a_{n}, b_{0}, \ldots, b_{n}), \ldots)
\]
and the multiplication
\[
(a_{0}, \ldots, a_{n}, \ldots) \cdot (b_{0}, \ldots, b_{n}, \ldots) := (P_{0}(a_{0},b_{0}), \ldots, P_{n}(a_{0}, \ldots, a_{n}, b_{0}, \ldots, b_{n}), \ldots).
\]
Then $W(A)$ is a ring with $1= (1, 0, \ldots) \in W(A)$, which is called the \emph{ring of Witt vectors} over $A$.
For $a \in A$, we set $[a]:=(a,0,\ldots ) \in W(A)$.
We define an additive map $V$ by
\[
V \colon W(A) \rightarrow W(A); (a_{0}, a_{1}, \ldots) \mapsto (0,a_{0}, a_{1}, \ldots).
\]
\item We define the ring $W_{n}(A)$ of \emph{Witt vectors over $A$ of length $n$} by
\[
W_{n}(A) := W(A)/ V^{n}W(A).
\]
We note that $V^{n}W(A)\subseteq W(A)$ is an ideal.
For $a \in A$ and $n \in \Z_{\geq 1}$, we set $[a]:=(a,0,\ldots ,0) \in W_n(A)$.
We define a $W(A)$-module homomorphism $R$ by
\[
W(A) \rightarrow W_{n}(A) ; (a_{0}, a_{1}, \ldots) \mapsto (a_{0}, a_{1}, \ldots, a_{n-1})
\]
and call $R$ the restriction map.
Moreover, we define 
\begin{eqnarray*}
V \colon W_{n}(A) \rightarrow W_{n+1}(A),
\end{eqnarray*}
and
\[
R \colon W_{n+1} (A) \rightarrow W_{n}(A),
\]
in a similar way.
The image of $\alpha \in W(A)$ or $W_n(A)$ by $V$ is denoted by $V\alpha$.
\item For any ring homomorphism $f \colon A \to A'$, a ring homomorphism
\[
W(A) \to W(A')\ ;\ (a_0,a_1,\ldots) \mapsto (f(a_0),f(a_1),\ldots)
\]
is induced and the induced homomorphism is commutative with $R$ and $V$.
Note that $f$ also induces a ring homomorphism $W_n(A) \to W_n(A')$.
\item For an ideal $I$ of $A$,
we denote the kernels of $W_n(A) \to W_n(A/I)$ and $W_n(A) \to W_n(A/I)$ induced by the natural quotient map $A \to A/I$ are denoted by $W(I)$ and $W_n(I)$, respectively.
\item We define a ring homomorphism $\varphi$ by
\[
\varphi \colon W(A) \rightarrow \prod_{\Z_{\geq 0}} A; (a_{0}, \ldots, a_{n}, \ldots) \mapsto (\varphi_{0}(a_{0}), \ldots, \varphi_{n}(a_{0}, \ldots, a_{n}), \ldots), 
\]
where $\prod_{\Z_{\geq 0}} A$ is the product in the category of rings.
The map $\varphi$ is injective if $p \in A$ is a non-zero divisor.
The element $\varphi_{n}(a_{0}, \ldots, a_{n})$ (resp.\,the element $\varphi(a_{0}, \ldots a_{n}, \ldots)$) is called the ghost component (resp.\,the vector of ghost components) of $(a_{0}, \ldots a_{n}, \ldots) \in W(A)$.
\end{itemize}
\end{definition}

\begin{proposition}\label{rule-witt}
Let $R$ be a ring.
Then we have the following properties:
\begin{enumerate}
    \item For $a,b \in R$ $m \in \Z_{\geq 0}$, we have $[a]V^m([b])=V^m([a^{p^m}b])$ in $W(R)$ .
    \item For $\alpha,\beta \in W(R)$, we have $V(\alpha)V(\beta)=pV(\alpha\beta)$ in $W(R)$.
\end{enumerate}
\end{proposition}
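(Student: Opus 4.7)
The plan is to verify both identities by reducing to a $p$-torsion free ring, where the ghost map $\varphi$ is injective by the last bullet of \cref{defn:Witt ring}, and then computing ghost components on both sides. For the reduction I would argue universally: for (1), carry out the computation in $W(\Z[X,Y])$ with $X,Y$ in place of $a,b$, and for (2), in $W(\Z[X_0,X_1,\ldots,Y_0,Y_1,\ldots])$ with $\alpha = (X_0,X_1,\ldots)$ and $\beta = (Y_0,Y_1,\ldots)$; in both universal rings $p$ is a non-zero divisor, so the ghost map is injective. The identities in $W(R)$ then follow by functoriality of $W$ under the specialization homomorphism $\Z[\ldots] \to R$.

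For (1), the key computation is the ghost components of $V^m([b])$. Using $\varphi_n(X_0,\ldots,X_n) = \sum_{i=0}^n p^i X_i^{p^{n-i}}$ and the fact that the only nonzero coordinate of $V^m([b])$ is $b$ in position $m$, I would obtain
\[
\varphi_n(V^m([b])) = \begin{cases} 0 & \text{if } n < m, \\ p^m b^{p^{n-m}} & \text{if } n \geq m. \end{cases}
\]
Combined with $\varphi_n([a]) = a^{p^n}$, this gives $\varphi_n([a] V^m([b])) = p^m a^{p^n} b^{p^{n-m}}$ for $n \geq m$; the same formula applied to $a^{p^m} b$ yields $\varphi_n(V^m([a^{p^m}b])) = p^m (a^{p^m} b)^{p^{n-m}} = p^m a^{p^n} b^{p^{n-m}}$, and both sides vanish for $n < m$. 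Injectivity of $\varphi$ then yields (1).

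For (2), I would first record the universal identity $\varphi_n(V\gamma) = p\,\varphi_{n-1}(\gamma)$ for $n \geq 1$ (with $\varphi_0(V\gamma) = 0$), which is immediate from the definition of $\varphi_n$ after the index shift $j = i-1$. It follows that
\[
\varphi_n(V(\alpha)V(\beta)) = p^2\,\varphi_{n-1}(\alpha)\,\varphi_{n-1}(\beta) = p^2\,\varphi_{n-1}(\alpha\beta) = \varphi_n(pV(\alpha\beta))
\]
for $n \geq 1$, while both sides vanish at $n = 0$. Injectivity of $\varphi$ finishes the argument. The only delicate step is the universal reduction to a $p$-torsion free ring; everything else is a short ghost-component calculation, and since these are classical Witt-vector identities one could equally well cite \cite{Serre}*{II, \S 6} without loss.
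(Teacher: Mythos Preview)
Your proposal is correct and follows essentially the same approach as the paper: reduce to a $p$-torsion free ring by a universal argument and then verify both identities via the injective ghost map. The only cosmetic differences are that the paper uses a single functorial polynomial ring $\Z[X_f \mid f \in R]$ for the reduction (rather than separate universal rings for (1) and (2)) and packages the ghost computation using the shift map $s(a_0,a_1,\ldots)=(0,a_0,a_1,\ldots)$ together with the identity $\varphi\circ V = p\,s\circ\varphi$, but the content is the same.
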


\begin{proof}
We take a ring homomorphism $\pi \colon A:=\Z[X_f \mid f \in R] \to R$ such that $\pi(X_f)=f$, then $\pi$ is surjective.
Since $\pi$ induces a ring homomorphism  $W(\pi) \colon W(A) \to W(R)$ and $A$ is $p$-torsion free, we may assume that $R$ is $p$-torsion free.
We take $\varphi \colon W(R) \to \prod_{\Z_{\geq 0}}R$  as in \cref{defn:Witt ring}, then $\varphi$ is injective.
Let $s \colon \prod_{\Z_{\geq 0}}R \to \prod_{\Z_{\geq 0}}R$ be a map defined by $s(a_0,a_1,\ldots)=(0,a_0,a_1,\ldots)$, then we have $s(\alpha\beta)=s(\alpha)s(\beta)$ for $\alpha,\beta \in \prod_{\Z_{\geq 0}}R$.
We prove assertion (1).
We have
\begin{align*}
    \varphi([a]V^m([b]))&=\varphi([a])\varphi(V^m([b]))=(a,a^p,\cdots)p^ms^m(b,b^p,\ldots) \\
    &=p^ms^m(a^{p^m}b,a^{p^{m+1}}b^p,\ldots)=\varphi(V^m([a^{p^m}b])),
\end{align*}
for $a,b \in R$.
Since $\varphi$ is injective, we obtain assertion (1).
Next, we prove assertion (2).
Furthermore, we have $\varphi \circ V(\alpha)=ps \circ \varphi(\alpha)$ for $\alpha \in W(R)$, thus
\[
\varphi(V(\alpha) V(\beta))=\varphi\circ V(\alpha)\varphi\circ V(\beta)=p^2s\circ \varphi(\alpha)s \circ \varphi(\beta)=p^2s\circ \varphi(\alpha\beta)=pV(\alpha\beta)
\]
for $\alpha, \beta \in W(R)$, as desired.
\end{proof}

\subsection{Test perfectoid and perfectoid pure threshold}
In this subsection, we introduce the notion of test perfectoid and  perfectoid pure threshold.
The existence of the test perfectoids is proved in Appendix \ref{app:A}.

\begin{definition}\textup{(cf.~\cref{thm:const-test-perfd})}
Let $R$ be a ring and $R_{\infty}$ a perfectoid $R$-algebra.
We say that $R_{\infty}$ is a \emph{test perfectoid over $R$} if for every perfectoid $R$-algebra $B$, there exist a $p$-completely faithfully flat $R$-algebra homomorphism $B \to B'$ to a perfectoid $R$-algebra $B'$ and an $R$-algebra homomorphism $R_{\infty} \to B'$.
\end{definition}

\begin{remark}\label{p-ff-bound}
Let $R \to S$ be a ring homomorphism of perfectoids.
Then $R \to S$ is $p$-completely faithfully flat if and only if $R/p^n \to S/p^n$ is faithfully flat for each positive integer $n$ by \cite{BS}*{Lemma~3.8(2)} and \cite{BMS19}*{Corollary~4.8(3)}.    
\end{remark}

\begin{proposition}\label{test-perf-perfectoidazation}
Let $R$ be a ring, and $R_{\infty}$ a perfectoid over $R$.
\begin{enumerate}
    \item Let $R_\infty \to R'_\infty$ be a $p$-completely faithfully flat $R$-algebra homomorphism to a perfectoid $R'_\infty$.
    Then $R_\infty'$ is a test perfectoid over $R$.
    \item Let $I$ be an ideal of $R$.
    Then $(R_{\infty}/IR_{\infty})_{perfd}$ is a test perfectoid over $R/I$.
\end{enumerate}
\end{proposition}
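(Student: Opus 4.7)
The plan is to reduce both parts to the defining property of the test perfectoid $R_\infty$ (which we take to be the intended hypothesis, as the conclusions would otherwise not make sense), combined with the fact that $p$-completely faithfully flat base change and perfectoidization preserve the perfectoid property.

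For (1), given any perfectoid $R$-algebra $B$, first invoke the test perfectoid property of $R_\infty$ to obtain a $p$-completely faithfully flat extension $B \to B'$ of perfectoids together with an $R$-algebra map $R_\infty \to B'$. Then form the derived $p$-completed tensor product $B'' := B' \widehat{\otimes}_{R_\infty}^{L} R'_\infty$. Since $R_\infty \to R'_\infty$ is $p$-completely faithfully flat and $R_\infty \to B'$ is a map of perfectoids, $B''$ is concentrated in degree zero and is itself a perfectoid $R$-algebra by stability of the perfectoid property under $p$-completely flat base change along maps of perfectoids. Moreover, $B' \to B''$ is $p$-completely faithfully flat, so the composite $B \to B''$ is as well, and we have a structural map $R'_\infty \to B''$. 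This is exactly the data witnessing that $R'_\infty$ is a test perfectoid over $R$.

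For (2), let $C$ be any perfectoid $R/I$-algebra, viewed also as a perfectoid $R$-algebra via $R \to R/I$. Apply the test perfectoid property of $R_\infty$ to obtain a $p$-completely faithfully flat perfectoid extension $C \to C'$ with an $R$-algebra map $R_\infty \to C'$. Since $C$ is an $R/I$-algebra, $IC = 0$; because $C/p^n \to C'/p^n$ is faithfully flat and hence injective for every $n$ by \cref{p-ff-bound}, and $C'$ is $p$-adically separated, one deduces $IC' = 0$. Therefore the composition $R_\infty \to C'$ kills $IR_\infty$ and hence factors through $R_\infty/IR_\infty$; since $R_\infty/IR_\infty$ is semiperfectoid and $C'$ is perfectoid, the universal property of perfectoidization promotes this to an $R/I$-algebra map $(R_\infty/IR_\infty)_{perfd} \to C'$. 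This witnesses $(R_\infty/IR_\infty)_{perfd}$ as a test perfectoid over $R/I$.

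The main obstacle is justifying in (1) that $B'' = B' \widehat{\otimes}_{R_\infty}^{L} R'_\infty$ is concentrated in degree zero and is perfectoid; this rests on preservation of the perfectoid property under $p$-completely faithfully flat base change along maps of perfectoids, which I would invoke from the literature (e.g.~Bhatt--Scholze on prisms). A minor point in (2) is the passage from $IC = 0$ to $IC' = 0$, which must use $p$-adic completeness of $C'$ together with the derived version of faithful flatness, so one should be careful that purity of $C \to C'$ after reduction modulo $p^n$ really transfers to the global statement about $I$.
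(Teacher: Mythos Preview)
Your argument for both parts matches the paper's approach: for (1) you form the $p$-completed tensor product $B' \widehat{\otimes}_{R_\infty} R'_\infty$ exactly as the paper does (the paper cites \cite{CS}*{Proposition~2.1.11(b)} for the perfectoid property rather than worrying about derived concentration), and for (2) you factor through the perfectoidization in the same way. One simplification in (2): your detour through faithful flatness modulo $p^n$ and $p$-adic separatedness to deduce $IC'=0$ is unnecessary---since $C \to C'$ is by definition an $R$-algebra homomorphism, the structure map $R \to C'$ factors as $R \to C \to C'$, and $I$ already vanishes in $C$; the paper simply records that $C'$ is automatically an $R/I$-algebra for this reason.
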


\begin{proof}
We prove (1).
We take a perfectoid $R$-algebra $B$, then replacing $B$ by a $p$-completely faithfully flat extension of $B$, we may assume that $B$ is $R_\infty$-algebra.
By \cite{CS}*{Proposition~2.1.11(b)}, the $p$-adic completion $V':=B \widehat{\otimes}_{R_\infty} R'_\infty$ of $B \otimes_{R_\infty} R'_\infty$ is a perfectoid.
Since $R_\infty \to R'_\infty$ is $p$-completely faithfully flat, so is $B \to B'$ by \cref{p-ff-bound}.
Since there exists an $R$-algebra homomorphism $R'_\infty \to B'$, as desired.
Next, we prove (2).
We take a perfectoid $R/I$-algebra $B$.
Since $R_{\infty}$ is a test perfectoid, there exists a $p$-completely faithfully flat extension $B \to B'$ of perfectoid $R$-algebras such that there exists an $R$-algebra homomorphism $R_\infty \to B'$.
Since $B'$ is an $R/I$-algebra and $R_{\infty}$-algebra, it is an $R_{\infty}/IR_{\infty}$-algebra.
Thus, we obtain the $R/I$-algebra homomorphism $(R_{\infty}/IR_{\infty})_{perfd} \to B'$, as desired.
\end{proof}

\begin{lemma}\label{positive-chara-test}
Let $R$ be an $\F_p$-algebra, $R_{\infty}$ a test perfectoid over $R$.
Let $M$ be an $R$-module.
Then we have
\[
\mathrm{Ker}(M \to R_{\infty} \otimes_R M )=\mathrm{Ker}(M \to R_{perf} \otimes_R M),
\]
where $R_{perf}$ is the perfection of $R$.
\end{lemma}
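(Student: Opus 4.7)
The plan is to use two facts that hold in characteristic $p$: every perfectoid ring is perfect, and the ideal $(p)$ is zero. The first gives the inclusion $\mathrm{Ker}(M \to R_{perf} \otimes_R M) \subseteq \mathrm{Ker}(M \to R_\infty \otimes_R M)$ via the universal property of the perfection, while the second, together with the defining property of the test perfectoid applied to $R_{perf}$ itself, yields the reverse inclusion.

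First I would observe that $R_\infty$, being a perfectoid $\F_p$-algebra, has bijective Frobenius, so the structure map $R \to R_\infty$ factors uniquely through the initial perfect $R$-algebra $R_{perf}$. Tensoring with $M$ produces a factorization $M \to R_{perf} \otimes_R M \to R_\infty \otimes_R M$, which gives the first inclusion at once. For the reverse inclusion, I would note that $R_{perf}$ is itself a perfectoid $R$-algebra (any perfect $\F_p$-algebra satisfies the perfectoid axioms, with $\pi = 0$) and apply the test perfectoid property of $R_\infty$ with $B := R_{perf}$. This produces a perfectoid $B'$ together with a $p$-completely faithfully flat map $R_{perf} \to B'$ and an $R$-algebra map $R_\infty \to B'$. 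Since $p = 0$ in $R$, the first of these is faithfully flat in the ordinary sense, so the base change map $R_{perf} \otimes_R M \to B' \otimes_R M$ is injective. Given $m \in \mathrm{Ker}(M \to R_\infty \otimes_R M)$, its image in $B' \otimes_R M \cong B' \otimes_{R_\infty}(R_\infty \otimes_R M)$ vanishes, and injectivity forces $m$ to map to $0$ in $R_{perf} \otimes_R M$ as well.

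The only delicate points are the two characteristic-$p$ reductions used above, namely that a perfectoid $\F_p$-algebra is automatically perfect (so that the universal property of $R_{perf}$ applies) and that the paper's notion of $p$-complete faithful flatness coincides with ordinary faithful flatness when $p = 0$. Both are standard and should not present a real obstacle; they are simply the places where the hypothesis that $R$ is an $\F_p$-algebra enters.
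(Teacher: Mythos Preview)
Your proof is correct and follows essentially the same route as the paper: both directions are obtained exactly as you describe, using that perfectoid $\F_p$-algebras are perfect for one inclusion and applying the test perfectoid property to $B=R_{\mathrm{perf}}$ (together with the observation that $p$-complete faithful flatness is ordinary faithful flatness when $p=0$) for the other.
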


\begin{proof}
Since $R_{\infty}$ is a perfectoid $\F_{p}$-algebra, it is a perfect ring.
Therefore, there exists an $R$-algebra homomorphism $R_{\perf} \to R_{\infty}$, thus we have
\[
\mathrm{Ker}(M \to R_{perf} \otimes_R M) \subseteq \mathrm{Ker}(M \to R_{\infty} \otimes_R M ).
\]
On the other hand, since $R_{\infty}$ is a test perfectoid, there exist a faithfully flat extension $R_{\perf} \to B$ to a perfect $R$-algebra and an $R$-algebra homomorphism $R_{\infty} \to B$.
Therefore we have
\[
\mathrm{Ker}(M \to R_{\infty} \otimes_R M ) \subseteq \mathrm{Ker}(M \to B \otimes_R M) \overset{(*)}{=}\mathrm{Ker}(M \to R_{perf} \otimes_R M),
\]
where $(*)$ follows from the faithfully flatness of $R_{\perf} \to B$.
\end{proof}

\begin{definition}\label{def:ppt}
Let $R$ be a $p$-torsion free Noetherian ring with $p$ in its Jacobson radical.
We assume $R$ is perfectoid pure.
We define the \emph{perfectoid pure threshold} $\ppt(R;\div(p))$ of $\div(p)$  by
\[
\inf \left\{i/p^e \geq 0 \middle| 
\begin{array}{l}
\textup{there exist a perfectoid $R$-algebra $B$ and $\varpi$ a $p^e$-power} \\
\textup{root of $p$ such that $R \to B \xrightarrow{\cdot \varpi^i} B$ is pure}
 \end{array}
 \right\}. 
\]
\end{definition}

\begin{proposition}\label{test-perf-tests}
Let $R$ be a $p$-torsion free Noetherian ring with $p$ in its Jacobson radical.
Then we obtain the following assertions.
\begin{enumerate}
    \item The ring $R$ is perfectoid pure if and only if every test perfectoid $R_{\infty}$ over $R$  is pure.
    \item Let $R_\infty$ be a test perfectoid over $R$  and $\{p^{1/p^e}\}$ a compatible system of $p$-power roots of $p$ in $R_\infty$.
    If $R$ is perfectoid pure and $R_\infty$ is $p$-torsion free, then we have 
    \[
    \ppt(R;\div(p))=\inf \{\alpha \in \Z[1/p]_{\geq 0} \mid \textup{$R \to R_{\infty} \xrightarrow{\cdot p^{\alpha}} R_\infty$ is pure } \}.
    \]
    \item We assume $(R,\m)$ is a complete local ring. 
    We take $S \to R$ and $R^{S_\infty}_{perfd}$ be as in \cite{p-pure}*{Lemma~4.23}.
    Then we have
    \[
    \ppt(R;\div(p))=\inf \{\alpha \in \Z[1/p]_{\geq 0} \mid \textup{$R \to R^{S_\infty}_{perfd} \xrightarrow{\cdot p^{\alpha}} R^{S_\infty}_{perfd}$ is pure } \}.
    \]
\end{enumerate}
\end{proposition}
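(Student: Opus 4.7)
I would treat (1), (2), (3) in order. The three share one descent device: given a pure (possibly twisted) map $R \to B$ to a perfectoid $B$ and the test perfectoid property producing a $p$-completely faithfully flat map $B \to B''$ together with $R_{\infty} \to B''$, one transfers (twisted) purity from $R \to B$ to $R \to R_{\infty}$ without needing $p$-adic separation of $M \otimes_R B$.

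For (1), one direction is trivial since at least one test perfectoid exists and is, by definition, perfectoid. For the converse, $R \to B$ pure gives $R/p^n \to B/p^n$ pure, and $B/p^n \to B''/p^n$ is faithfully flat by \cref{p-ff-bound}, so the composition $R/p^n \to B''/p^n$ is pure. Since purity may be verified on finitely generated modules (filtered colimits preserve injectivity), take such $M$ and $x \in \ker(M \to M \otimes_R R_{\infty})$; pushing along $R_\infty \to B''$ and reducing modulo $p^n$ forces $\bar{x} = 0$ in $M/p^n$ for every $n$, and Krull's intersection theorem (using that $R$ is Noetherian, $M$ is finitely generated, and $p$ lies in the Jacobson radical) yields $x = 0$.

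For (2), write $L := \ppt(R;\div(p))$ and $T$ for the right-hand infimum. The bound $L \leq T$ is immediate: $R_{\infty}$ is $p$-torsion free and perfectoid, so any $\alpha = i/p^e$ for which $R \to R_{\infty} \xrightarrow{\cdot p^{\alpha}} R_{\infty}$ is pure supplies a candidate pair $(R_{\infty}, p^{1/p^e})$ in the definition of $\ppt$. For $T \leq L$, fix $\alpha = i/p^e$ and $(B, \varpi)$ realizing purity of $R \to B \xrightarrow{\cdot \varpi^i} B$, and choose any $p^e$-th root $\varpi' \in R_{\infty}$ of $p$. The test property produces $B \to B''$ and $R_{\infty} \to B''$; the images of $\varpi, \varpi'$ in $B''$ differ by a $p^e$-th root of unity, which is a unit. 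Given $x \in M$ finitely generated with $x \otimes \varpi'^i = 0$ in $M \otimes_R R_{\infty}$, pushing to $M \otimes_R B''$ and clearing this unit yields $x \otimes \varpi^i = 0$ in $M \otimes_R B''$. Reducing modulo $p^n$ and invoking faithful flatness of $B/p^n \to B''/p^n$, I obtain $\bar{x} \otimes \bar{\varpi}^i = 0$ in $M/p^n \otimes_{R/p^n} B/p^n$; the twisted purity of $R/p^n \to B/p^n$ (inherited from that of $R \to B$) then forces $\bar{x} = 0$ in $M/p^n$, and Krull's intersection theorem concludes $x = 0$.

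For (3), the perfectoidization $R^{S_{\infty}}_{perfd}$ of \cite{p-pure}*{Lemma~4.23} is, by its construction, a $p$-torsion free test perfectoid over $R$; the statement is thus the instance of (2) with $R_{\infty} := R^{S_{\infty}}_{perfd}$. The main obstacle is to keep the descent argument free from any use of $p$-adic separation of $M \otimes_R B$, which can fail when $B$ is a non-Noetherian perfectoid; the remedy is to perform every tensoring and descent modulo $p^n$ on the nose and to apply Krull's intersection theorem at the end only to the Noetherian module $M$ itself, rather than to $M \otimes_R B$.
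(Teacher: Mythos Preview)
Your overall strategy matches the paper's: produce a common perfectoid target receiving maps from both $R_\infty$ and $B$, then descend purity. The paper is terser, delegating the descent step in (1) and (2) to ``the proof of \cite{p-pure}*{Lemma~4.5}'', whereas you spell out the mod-$p^n$ reduction together with Krull's intersection theorem; your version is correct and more self-contained.

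Two points need adjustment. In (2) you assert that the images of $\varpi$ and $\varpi'$ in $B''$ differ by a $p^e$-th root of unity. In a perfectoid ring that is not a domain (and in which $p$ may be a zero-divisor) this need not hold. The correct input is \cite{BMS18}*{Lemma~3.10}, which gives only that $\varpi$ and the image of $\varpi'$ generate the same ideal of $B''$; this is exactly what the paper invokes. Fortunately your argument uses only one divisibility: writing the image of $\varpi$ as $a\cdot\nu(\varpi')$ for some $a\in B''$ (not necessarily a unit), one has $x\otimes\varpi^i = a^i\,(x\otimes\nu(\varpi')^i)=0$, and the rest of your descent goes through unchanged.

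In (3) you assert that $R^{S_\infty}_{perfd}$ is a test perfectoid ``by its construction''. The paper does not claim this; it says instead that (3) follows from the \emph{argument} of (2) together with the proof of \cite{p-pure}*{Lemma~4.23}, the latter supplying directly the required factorization property for $R^{S_\infty}_{perfd}$. If you want (3) to be a literal instance of (2), you must check that $S_\infty$ is a test perfectoid over $S$ and then invoke \cref{test-perf-perfectoidazation}(2); otherwise appeal to \cite{p-pure}*{Lemma~4.23} as the paper does.
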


\begin{proof}
We assume $R$ is perfectoid pure and take a test perfectoid $R \to R_{\infty}$ over $R$.
Since $R$ is perfectoid pure, there exists a pure extension $R \to B$ such that $B$ is a perfectoid.
Since $R_{\infty}$ is a test perfectoid, we may assume that there exists an $R$-algebra homomorphism $R_{\infty} \to B$ by the proof of \cite{p-pure}*{Lemma~4.5}. 
Therefore, $R \to R_{\infty}$ is also pure.
On the other hand, by \cref{thm:const-test-perfd}, $R$ has a test perfectoid, thus we obtain the converse implication.

Next, we prove assertion (2).
By definition, we have
\[
\inf \{\alpha \in \Z[1/p]_{\geq 0} \mid \textup{$R \to R_{\infty} \xrightarrow{\cdot p^{\alpha}} R_\infty$ is pure } \} \geq \ppt(R;\div(p)).
\]
We take $i,e \in \Z_{\geq 1}$, a perfectoid $R$-algebra $B$, and $p^e$-th root $\varpi$ of $p$ such that $R \to B \xrightarrow{\cdot \varpi^i} B$ is pure.
It is enough to show that $R \to R_\infty \xrightarrow{\cdot p^{i/p^e}} R_\infty$ is pure.
By the proof of (1) and the definition of test perfectoid, we may assume there exists an $R$-algebra homomorphism $\nu \colon R_\infty \to B$.
By \cite{BMS18}*{Lemma~3.10}, we have $\varpi B=\nu(p^{1/p^e})B$, thus there exists a unit $u \in B$ such that $\varpi=u\nu(p^{1/p^e})$, and in particular, we have $R \to B \xrightarrow{\cdot \nu(p^{1/p^e})} B$ is pure.
By the commutative diagram
\[
\begin{tikzcd}
    R \arrow[r] & R_\infty \arrow[r,"\cdot p^{1/p^e}"] \arrow[d] & R_\infty \arrow[d] \\
    & B \arrow[r," \cdot \nu(p^{1/p^e})"] & B,
\end{tikzcd}
\]
we obtain that $R \to R_\infty \xrightarrow{\cdot p^{1/p^e}} R_\infty$ is pure.

Finally, assertion (3) follows from the same argument as in the proof of assertion (2) and the proof of \cite{p-pure}*{Lemma~4.23}.
\end{proof}

\begin{remark}\label{remk:ppt}
Let $(R,\m)$ be a $p$-torsion free Noetherian local ring with $p \in \m$.
Then $\ppt(R;\div(p))=1$ if and only if $(R,p)$ is perfectoid pure by means of \cite{p-pure}*{Definition~6.1}.
Indeed, we take a test perfectoid $R_\infty$ over $R$ such that $R_\infty$ contains a compatible system $\{p^{1/p^e}\}$ of $p$-power root of $p$ and $R_\infty$ is $p$-torsion free, where the existence follows from \cref{thm:const-test-perfd}.
Since $R$ and $R_\infty$ are $p$-torsion free, the pair $(R,p)$ is perfectoid pure if and only if $R \to R_\infty \xrightarrow{\cdot p^{1-1/p^e}} R_\infty$ is pure for every $e \geq 1$, thus it is equivalent to $\ppt(R;\div(p))=1$ by \cref{test-perf-tests}.
In particular, if $R/pR$ is $F$-pure and $R$ is complete intersection, then $\ppt(R;\div(p))=1$ by \cite{p-pure}*{Theorem~6.6}.  
\end{remark}

\section{Witt vectors of rings with Frobenius lifts}
In this section, we study the structure of the ring of Witt vectors of rings with Frobenius lift.
\cref{decomposition-W} is a key ingredient of results in the paper.

\begin{notation}\label{notation:F-lift}
Let $A$ be a $p$-torsion free ring with a ring homomorphism $\phi \colon A \to A$ such that $\phi(a) \equiv a^p \mod p$ for every $a \in A$.
The ring homomorphism
\[
\varphi:=\prod \varphi_r \colon W(A) \to \prod_{\mathbb{N}} A
\]
is defined as in \cref{defn:Witt ring}. 
\end{notation}

\begin{proposition}\label{s_phi}\textup{(cf.~\cite{Ill79}*{Section~1.3})}
We use the notation introduced in \cref{notation:F-lift}.
There exists a ring homomorphism
\[
s_\phi \colon A \to W(A)
\]
such that 
\[
\varphi_n(s_\phi(a))=\phi^n(a).
\]
The composition $A \xrightarrow{s_\phi} W(A) \to W_n(A)$ is also denoted by $s_\phi$.
\end{proposition}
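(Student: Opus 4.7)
The plan is to construct $s_\phi$ by specifying its ghost coordinates. Consider the map
\[
\psi \colon A \longrightarrow \prod_{n \geq 0} A, \qquad a \mapsto (\phi^n(a))_{n \geq 0},
\]
which is visibly a ring homomorphism because each iterate $\phi^n$ is. Since $A$ is $p$-torsion free, the ghost map $\varphi \colon W(A) \to \prod_{n \geq 0} A$ recalled in \cref{defn:Witt ring} is injective. It therefore suffices to show that the image of $\psi$ is contained in the image of $\varphi$: the resulting factorization $s_\phi \colon A \to W(A)$ is then automatically a ring homomorphism (by injectivity of $\varphi$) and satisfies $\varphi_n(s_\phi(a)) = \phi^n(a)$ by construction. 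Compatibility with the restriction $W(A) \to W_n(A)$ is then immediate from the definition of $\varphi_n$.

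The containment $\psi(A) \subseteq \varphi(W(A))$ is the content of the Cartier--Dieudonn\'e--Dwork lemma, which in our setting asserts that a sequence $(w_n)_{n \geq 0} \in \prod A$ lies in $\varphi(W(A))$ if and only if $\phi(w_{n-1}) \equiv w_n \pmod{p^n A}$ for every $n \geq 1$. For the particular sequence $w_n := \phi^n(a)$ attached to $a \in A$, the criterion is trivially satisfied, since $\phi(w_{n-1}) = \phi^n(a) = w_n$ on the nose. This is really where the hypothesis that $\phi$ is a Frobenius lift enters: it is what forces the Dwork congruences for $(\phi^n(a))_n$ to hold.

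For a self-contained argument (as in \cite{Ill79}*{Section~1.3}) one constructs $s_\phi(a) = (a_0, a_1, a_2, \ldots)$ recursively by $a_0 := a$ and
\[
a_n := \frac{1}{p^n}\left(\phi^n(a) - \sum_{i=0}^{n-1} p^i a_i^{p^{n-i}}\right),
\]
the division being unambiguous because $A$ is $p$-torsion free. The main (in fact only) obstacle is to verify that the numerator lies in $p^n A$ at each stage, and this is the heart of the Dwork lemma. It follows by induction on $n$ from the elementary fact that if $x \equiv y \pmod{p^k A}$ with $k \geq 1$, then $x^p \equiv y^p \pmod{p^{k+1} A}$, combined with the defining congruence $\phi(a_i) \equiv a_i^p \pmod{pA}$. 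Once the recursion is set up, the ring homomorphism property of $s_\phi$ again reduces, via the injectivity of $\varphi$, to the observation that $\psi$ is a ring homomorphism; in particular $s_\phi(1) = (1,0,0,\ldots)$ and $s_\phi(a+b) = s_\phi(a)+s_\phi(b)$, $s_\phi(ab)=s_\phi(a)s_\phi(b)$ all follow by matching ghost components.
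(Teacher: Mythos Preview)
Your proof is correct and follows essentially the same approach as the paper: both construct $s_\phi(a)=(a_0,a_1,\ldots)$ recursively from $a_0=a$, verify by induction (using $\phi(a_i)\equiv a_i^p\pmod p$ and the congruence $x\equiv y\pmod{p^k}\Rightarrow x^p\equiv y^p\pmod{p^{k+1}}$) that the required division by $p^n$ is legitimate, and then deduce the ring homomorphism property from the injectivity of the ghost map $\varphi$. Your framing via the Cartier--Dieudonn\'e--Dwork criterion is a convenient packaging of the same argument.
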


\begin{proof}
It is enough to show that for every $a \in A$, there exist $a_0,a_1,\ldots$ such that
\[
\sum_{0 \leq s \leq n} p^sa_s^{p^{n-s}}=\phi^n(a).
\]
Then if we define $s_\phi(a)=(a_0,a_1,\ldots)$, we have $\varphi_n(s_\phi(a))=\phi^n(a)$.
Since $\varphi$ is injective, $s_\phi$ is a ring homomorphism.

Therefore, we prove the existence of $a_0,a_1,\ldots$ as above by the induction on $n$.
First, we define $a_0:=a$.
Next, we assume $n \geq 1$ and there exist $a_0,a_1,\ldots,a_{n-1}$ such that
\[
\sum_{0 \leq s \leq n-1} p^sa_s^{p^{n-s-1}}=\phi^{n-1}(a).
\]
Then we have
\[
\phi^n(a)=\sum_{0 \leq s \leq n-1} p^s\phi(a_s)^{p^{n-s-1}}
\]
Since $\phi(a_s) \equiv a_s^p \mod p$, we have $p^s\phi(a_s)^{p^{n-s-1}} \equiv p^sa_s^{p^{n-s}} \mod p^{n}$.
Therefore, we have
\[
\phi^n(a) \equiv \sum_{0 \leq s \leq n-1} p^sa_s^{p^{n-s}} \mod p^n,
\]
and there exists $a_{n}$ such that
\[
\phi^n(a)=\sum_{0 \leq s \leq n} p^sa_s^{p^{n-s}},
\]
as desired.
\end{proof}

\begin{proposition}\label{V-module-hom}
We use the notation introduced in \cref{notation:F-lift}.
We regard $W_n(A)$ as $A$-module by $s_\phi$ for every positive integer $n$.
Then $V \colon \phi_*W_{n-1}(A) \to W_n(A)$ is an $A$-module homomorphism for $n \geq 2$.
\end{proposition}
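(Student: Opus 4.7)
The plan is to establish the multiplicative identity
\[
V\bigl(s_\phi(\phi(a)) \cdot \alpha\bigr) = s_\phi(a) \cdot V(\alpha)
\]
in $W_n(A)$ for every $a \in A$ and $\alpha \in W_{n-1}(A)$. Combined with the additivity of $V$ built into \cref{defn:Witt ring}, this is exactly the statement that $V \colon \phi_*W_{n-1}(A) \to W_n(A)$ is $A$-linear, where the source $A$-action is $a \cdot \alpha = s_\phi(\phi(a)) \alpha$ and the target $A$-action is $a \cdot \beta = s_\phi(a) \beta$.

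First I would lift $\alpha$ to an element of $W(A)$, prove the identity in $W(A)$, and then reduce modulo $V^n W(A)$ to descend it to $W_n(A)$. Since $A$ is $p$-torsion free, the ghost map $\varphi \colon W(A) \to \prod_{\Z_{\geq 0}} A$ of \cref{defn:Witt ring} is injective, so it suffices to check the equality on each component $\varphi_m$. The standard identities $\varphi_m \circ V = p\varphi_{m-1}$ for $m \geq 1$ and $\varphi_0 \circ V = 0$, the multiplicativity of each $\varphi_m$, and the relation $\varphi_m \circ s_\phi = \phi^m$ from \cref{s_phi}, jointly yield
\[
\varphi_m\bigl(V(s_\phi(\phi(a))\alpha)\bigr) = p\phi^{m-1}(\phi(a))\,\varphi_{m-1}(\alpha) = p\phi^m(a)\,\varphi_{m-1}(\alpha) = \varphi_m\bigl(s_\phi(a)V(\alpha)\bigr)
\]
for $m \geq 1$, while for $m = 0$ both sides vanish. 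Injectivity of $\varphi$ then forces equality in $W(A)$, hence in $W_n(A)$.

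I do not anticipate any real obstacle: the content is bookkeeping with ghost components, using only $p$-torsion freeness of $A$ (for injectivity of $\varphi$) and the defining property of $s_\phi$. A slightly slicker packaging is to notice that $s_\phi \circ \phi$ and $F \circ s_\phi$ agree on every ghost component (both equal $\phi^{m+1}$), hence agree as ring maps $A \to W(A)$, and then invoke the classical projection formula $V(F(\beta)\gamma) = \beta V(\gamma)$ for Witt vectors; this immediately gives $V(s_\phi(\phi(a))\alpha) = V(F(s_\phi(a))\alpha) = s_\phi(a) V(\alpha)$, which one can then reduce modulo $V^nW(A)$.
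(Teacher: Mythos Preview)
Your proof is correct and follows essentially the same approach as the paper: both verify the $A$-linearity of $V$ by passing to ghost components and using injectivity of the ghost map $\varphi$ (valid since $A$ is $p$-torsion free). The paper packages the argument as a commutative square with the shift-and-multiply-by-$p$ map $V'$ on the ghost side, while you compute $\varphi_m$ of each side directly; the content is identical.
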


\begin{proof}
First, we note that the diagram
\[
\begin{tikzcd}
    \phi_*W_{n-1}(A) \arrow[r,"V"] \arrow[d,"\phi_*\varphi"] & W_n(A) \arrow[d,"\varphi"] \\
    \phi_*A \oplus \cdots \oplus \phi^n_*A \arrow[r,"V'"] & A \oplus \cdots \oplus \phi^n_*A
\end{tikzcd}
\]
commutes, where $V'$ is defined by $V'((a_0,\ldots,a_{n-1}))=(0,pa_0,\ldots,pa_{n-1})$.
Furthermore, since $V'$ is an $A$-module homomorphism and the $A$-module homomorphisms
$\phi_*\varphi$,  $\varphi$ are injective, the map $V$ is $A$-module homomorphism, as desired.
\end{proof}

\begin{definition}
We use the notation introduced in \cref{notation:F-lift}.
We define a map
\[
\Delta_{W_n} \colon W_n(A) \to W_{n-1}(A)
\]
by
\[
V\Delta_{W_n}((a_0,\ldots,a_{n-1}))=(a_0,\ldots,a_{n-1})-s_\phi(a_0).
\]
Since $s_\phi(a_0)-[a_0]$ is contained in the image of $V$, the map $\Delta_{W_n}$ is well-defined.
\end{definition}

\begin{proposition}\label{prop:delta map'}
We use the notation introduced in \cref{notation:F-lift}.
Maps $\Delta_{W_n}$ are group homomorphisms and satisfy the following properties
\begin{enumerate}
    \item $\Delta_{W_n}(s_\phi(a))=0$ for $a \in A$ , 
    \item $\Delta_{W_n}(V\alpha)=\alpha$ for $\alpha \in W_{n-1}(A)$, and
    \item $\Delta_{W_n}(\alpha\beta) \equiv s_\phi \circ \phi(b)\Delta_{W_n}(\alpha)+s_\phi \circ \phi(a)\Delta_{W_n}(\beta) \mod pW_{n-1}(A)$ for $\alpha,\beta \in W_n(A)$ and $a:=R^{n-1}(\alpha),b:=R^{n-1}(\beta)$.
\end{enumerate}
\end{proposition}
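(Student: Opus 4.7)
The plan is to use the injectivity of $V : W_{n-1}(A) \to W_n(A)$ (the shift is clearly injective on length-$(n{-}1)$ truncations) as the main engine: since the definition stipulates $V\Delta_{W_n}(\alpha) = \alpha - s_\phi(R^{n-1}\alpha)$, any identity in $W_{n-1}(A)$ can be verified after applying $V$ and working in $W_n(A)$. Additivity of $\Delta_{W_n}$ is then immediate because $s_\phi$ is a ring homomorphism (\cref{s_phi}) and $R^{n-1}$ is additive. Property (1) reduces to $R^{n-1}(s_\phi(a)) = a$, which holds since the first component of $s_\phi(a)$ is $a$; property (2) reduces to $R^{n-1}(V\alpha) = 0$, which is immediate since $V$ shifts.

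The substance of the proof is (3). First I would write each factor uniquely as $\alpha = s_\phi(a) + V\gamma$ with $\gamma = \Delta_{W_n}(\alpha)$, and symmetrically $\beta = s_\phi(b) + V\delta$; this is legitimate because $\alpha - s_\phi(a)$ has first component $0$, hence lies in $VW_{n-1}(A)$. Expanding the product,
\[
\alpha\beta - s_\phi(ab) \;=\; s_\phi(a)\cdot V\delta \;+\; s_\phi(b)\cdot V\gamma \;+\; V\gamma \cdot V\delta.
\]
I need two Witt-vector identities to process the right-hand side. The first is the standard formula $x\cdot V(y) = V(F(x)y)$ in $W(A)$, where $F$ is the unique ring endomorphism characterised by $\varphi_n\circ F = \varphi_{n+1}$. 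The second is the key claim $F(s_\phi(c)) = s_\phi(\phi(c))$, which follows by comparing ghost components: both sides have $n$-th ghost component $\phi^{n+1}(c)$, and $\varphi$ is injective thanks to $p$-torsion freeness of $A$. These combine to give $s_\phi(a)\cdot V\delta = V(s_\phi(\phi(a))\,\delta)$ and symmetrically for the other mixed term; meanwhile \cref{rule-witt}(2) yields $V\gamma\cdot V\delta = pV(\gamma\delta) = V(p\gamma\delta)$ (the last equality using $p\cdot V(z) = V(pz)$, again verified on ghost components).

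Packaging everything, I obtain
\[
V\Delta_{W_n}(\alpha\beta) \;=\; V\bigl(s_\phi(\phi(a))\,\delta + s_\phi(\phi(b))\,\gamma + p\gamma\delta\bigr),
\]
and injectivity of $V$ then gives the exact equality $\Delta_{W_n}(\alpha\beta) = s_\phi(\phi(a))\,\Delta_{W_n}(\beta) + s_\phi(\phi(b))\,\Delta_{W_n}(\alpha) + p\,\Delta_{W_n}(\alpha)\Delta_{W_n}(\beta)$, which reduces modulo $pW_{n-1}(A)$ to (3).

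The main obstacle is purely bookkeeping: pinning down that $F(s_\phi(c)) = s_\phi(\phi(c))$ and that $x\cdot V(y) = V(F(x)y)$ descends cleanly to truncated Witt vectors. Once those are in place, everything is formal manipulation in the spirit of \cref{rule-witt}, and the $p$-torsion freeness hypothesis is exactly what makes the ghost-component arguments and the injectivity of $V$ available throughout.
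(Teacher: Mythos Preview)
Your proof is correct and follows essentially the same path as the paper's: both write $\alpha\beta - s_\phi(ab)$ as $s_\phi(a)\,V\delta + s_\phi(b)\,V\gamma + V\gamma\cdot V\delta$, push everything inside $V$, and then cancel $V$ by injectivity to obtain the exact identity $\Delta_{W_n}(\alpha\beta) = s_\phi(\phi(a))\Delta_{W_n}(\beta) + s_\phi(\phi(b))\Delta_{W_n}(\alpha) + p\,\Delta_{W_n}(\alpha)\Delta_{W_n}(\beta)$. The only cosmetic difference is that where you invoke the Witt Frobenius identity $x\cdot V(y)=V(F(x)y)$ together with $F\circ s_\phi = s_\phi\circ\phi$ (checked on ghost components), the paper instead cites \cref{V-module-hom}, which packages exactly the same content as the statement that $V:\phi_*W_{n-1}(A)\to W_n(A)$ is $A$-linear for the $s_\phi$-module structure; both arguments ultimately rest on the same ghost-component computation.
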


\begin{proof}
We take an element $a \in A$, then 
\[
V\Delta_{W_n}(s_\phi(a))=s_\phi(a)-s_\phi(a)=0,
\]
thus we have $\Delta_{W_n}(s_\phi(a))=0$.
The second assertion is clear by definition of $\Delta_{W_n}$.
For the third condition, we consider
\begin{align*}
    V\Delta_{W_n}(\alpha\beta) &=\alpha\beta-s_\phi(ab) \\
    &=(\alpha-s_\phi(a))\beta+s_\phi(a)(\beta-s_\phi(b)) \\
    &=s_\phi(b)V\Delta_{W_n}(\alpha)+s_\phi(a)V\Delta_{W_n}(\beta)+(V\Delta_{W_n}(\alpha))(V\Delta_{W_n}(\beta)) \\
    &\overset{(\star)}{=}V(s_\phi \circ \phi(b)\Delta_{W_n}(\alpha)+s_\phi \circ \phi(a)\Delta_{W_n}(\beta)+p\Delta_{W_n}(\alpha)\Delta_{W_n}(\beta)),
\end{align*}
where $(\star)$ follows from \cref{V-module-hom} and \cref{rule-witt} (2).
Therefore, we have
\[
\Delta_{W_n}(\alpha\beta)=s_\phi \circ \phi(b)\Delta_{W_n}(\alpha)+s_\phi \circ \phi(a)\Delta_{W_n}(\beta)+p\Delta_{W_n}(\alpha)\Delta_{W_n}(\beta),
\]
as desired.
\end{proof}

\begin{thm}\label{thm:delta formula}\textup{(c.f.~\cite{kty}*{Lemma 3.17})}
We use the notation introduced in \cref{notation:F-lift}.
Let $a \in A$.
We define the map
\[
\Delta_s(a)=\Delta_{W_2} \circ \Delta_{W_3} \circ \cdots \circ \Delta_{W_{s+1}}([a]) \in A
\]
for $s \geq 1$, and $\Delta_0(a)=a$.
Then for every $a \in A$, we have 
\[
[a]=s_\phi(a)+Vs_\phi(\Delta_1(a))+\cdots + V^{n-1}s_\phi(\Delta_{n-1}(a)) +V^{n}\Delta_{n}(a).
\]
In particular, we have the following two equations
\begin{align*}
    a^{p^{n}}&=\sum^n_{s=0}p^s\phi^{n-s}\Delta_s(a) \\
    \Delta_n(a)&=\frac{\Delta_1(a^{p^{n-1}})}{p^{n-1}}. 
\end{align*}
Furthermore, we obtain
\[
\Delta_n(a) \equiv
\begin{cases}
    a^{p^n-p}\Delta_1(a)+a^{p^n-2p}\Delta_1(a)^p \mod p & p=2 \\
    a^{p^n-p}\Delta_1(a) \mod p & \textup{otherwise}
\end{cases}
\]
for integers $n \geq 2$.
\end{thm}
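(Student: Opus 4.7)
I would prove the four assertions in order, deriving (2) from (1) by applying the ghost component $\varphi_n$, (3) from (2) by comparing two consecutive levels, and (4) from (3) by binomial expansion together with Kummer's theorem.

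\emph{Identity for $[a]$ and its ghost form.} For any $\alpha \in W_m(A)$ with first component $a_0$, the definition of $\Delta_{W_m}$ rearranges to $\alpha = s_\phi(a_0) + V\Delta_{W_m}(\alpha)$. Starting with $\alpha = [a] \in W_{n+1}(A)$ and iterating this $n$ times yields (1), interpreting $V^n\Delta_n(a)$ as $V^n[\Delta_n(a)] \in W_{n+1}(A)$. What one needs is that the first component of the $k$-fold iterate $\Delta_{W_2}\circ\cdots\circ\Delta_{W_{k+1}}([a])$ equals $\Delta_k(a)$; this follows inductively from the compatibility $R\circ\Delta_{W_{m+1}} = \Delta_{W_m}\circ R$, which is in turn immediate from the definitions together with the injectivity of $V$. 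Applying the ghost map $\varphi_n$ to (1), using $\varphi_n\circ s_\phi = \phi^n$ (\cref{s_phi}) and the identity $\varphi_n(V^k\gamma) = p^k\varphi_{n-k}(\gamma)$ (derived from $\varphi\circ V = p\cdot s\circ\varphi$, as in the proof of \cref{rule-witt}), then yields (2).

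\emph{The recurrence $\Delta_n(a) = \Delta_1(a^{p^{n-1}})/p^{n-1}$.} I would first establish that $\Delta_s$ commutes with $\phi$, meaning $\Delta_s(\phi(a)) = \phi(\Delta_s(a))$: the functorial ring homomorphism $W(\phi)\colon W(A)\to W(A)$ commutes with $V$ (shift versus componentwise action) and with $s_\phi$ (via $\varphi\circ W(\phi) = \phi\circ\varphi$ combined with the injectivity of $\varphi$), hence with each $\Delta_{W_m}$. Applying (2) at level $n-1$ to $\phi(a)$ therefore gives $\phi(a)^{p^{n-1}} = \sum_{s=0}^{n-1}p^s\phi^{n-s}(\Delta_s(a))$; subtracting this from (2) at level $n$ for $a$ produces $p^n\Delta_n(a) = a^{p^n} - \phi(a)^{p^{n-1}}$. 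On the other hand, (2) at level $1$ applied to $a^{p^{n-1}}$ reads $p\,\Delta_1(a^{p^{n-1}}) = a^{p^n} - \phi(a)^{p^{n-1}}$, and (3) follows by dividing by $p^{n-1}$.

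\emph{The mod-$p$ congruence and main obstacle.} Writing $\phi(a) = a^p - p\delta$ with $\delta := \Delta_1(a)$, binomial expansion in the formula from (3) yields
\[
\Delta_n(a) = \sum_{k=1}^{p^{n-1}}(-1)^{k+1}\binom{p^{n-1}}{k}p^{k-n}\,a^{p^n-pk}\,\delta^k.
\]
By Kummer's theorem $v_p\binom{p^{n-1}}{k} = (n-1) - v_p(k)$, so the $k$-th summand has $p$-adic valuation $k-1-v_p(k)$, which is $\geq 1$ unless $k - v_p(k) \leq 1$. A short case check shows that for $p\geq 3$ only $k=1$ survives, producing $a^{p^n-p}\delta$ modulo $p$; for $p=2$ both $k=1$ and $k=2$ survive, and the $k=2$ coefficient $-\binom{2^{n-1}}{2}\cdot 2^{2-n} = -(2^{n-1}-1) \equiv 1 \pmod{2}$ (for $n\geq 2$) produces the extra term $a^{p^n-2p}\delta^p$. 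The only care needed throughout is the bookkeeping in step~(1) and the commutations of $\Delta_{W_m}$ with $R$ and with $W(\phi)$ used in step~(3); neither is deep, but both need attention because $\Delta_{W_m}$ is not a ring homomorphism, so the arguments must be channelled through $\varphi$ and its injectivity.
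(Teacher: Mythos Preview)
Your proof is correct and follows essentially the same route as the paper for (1)--(3). One simplification: in your step for $\Delta_n(a)=\Delta_1(a^{p^{n-1}})/p^{n-1}$ you first establish $\Delta_s(\phi(a))=\phi(\Delta_s(a))$ via $W(\phi)$, but this is unnecessary---the paper simply applies the ring homomorphism $\phi$ to identity~(2) at level $n-1$ for $a$, obtaining $\phi(a^{p^{n-1}})=\sum_{s=0}^{n-1}p^s\phi^{n-s}\Delta_s(a)$ directly, and then subtracts from (2) at level $n$. For the final mod-$p$ congruence the paper only cites \cite{kty}*{Theorem~3.20}, whereas you supply an explicit self-contained argument via the binomial expansion of $(a^p-p\delta)^{p^{n-1}}$ and the valuation formula $v_p\binom{p^{n-1}}{k}=(n-1)-v_p(k)$; that argument is fine and slightly more informative than the citation.
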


\begin{proof}
By the definition of $\Delta_{W_{n+1}}([a])$, we have
\[
[a]=s_\phi(a)+V\Delta_{W_{n+1}}([a]).
\]
Furthermore, since the first component of $\Delta_{W_{n+1}}([a])$ is $\Delta_1(a)$, we have
\[
\Delta_{W_{n+1}}([a])=s_\phi(\Delta_1(a))+V\Delta_{W_{n}}(\Delta_{W_{n+1}}([a])).
\]
We note that the first component of $\Delta_{n-r} \circ \cdots \Delta_{n+1}([a])$ is $\Delta_{r}(a)$, thus, repeating such a process, we have
\[
[a]=s_\phi(a)+Vs_\phi(\Delta_1(a))+\cdots + V^{n-1}s_\phi(\Delta_{n-1}(a)) +V^{n}\Delta_{n}(a).
\]
Furthermore, considering the ghost component $\varphi_n$, we have
\begin{equation}\label{eq:delta}
    a^{p^n}=\phi^n(a)+p\phi^{n-1}\Delta_1(a)+\cdots +p^{n}\Delta_{n}(a)=\sum^n_{s=0} p^s\phi^{n-s}\Delta_s(a).
\end{equation}
Moreover, we have
\begin{align*}
    p\Delta_1(a^{p^{n-1}})
    &=a^{p^{n}}-\phi(a^{p^{n-1}}) \\
    &\overset{(\star_1)}{=} \sum^n_{s=0} p^s\phi^{n-s}\Delta_s(a)-\sum^{n-1}_{s=0}p^s\phi^{n-s}\Delta_s(a) \\
    &=p^n\Delta_n(a),
\end{align*}
where $(\star_1)$ follows from (\ref{eq:delta}).
The last assertion follows from the proof of \cite{kty}*{Theorem~3.20}.
\end{proof}

\begin{theorem}\label{decomposition-W}
We use the notation introduced in \cref{notation:F-lift}.
We regard $W_n(A)$ as $A$-module by $s_\phi$.
Then we obtain an $A$-module isomorphism
\[
\Psi_n \colon W_n(A) \xrightarrow{\sim} A \oplus \phi_*A \oplus \cdots \oplus \phi^{n-1}_*A,
\]
where $\Psi_n$ is defined by
\[
\Psi_n((a_0,\ldots,a_{n-1}))=(a_0,\Delta_1(a_0)+a_1,\ldots,\Delta_{n-1}(a_0)+\Delta_{n-2}(a_1)+\cdots+\Delta_1(a_{n-2})+a_{n-1}).
\]
\end{theorem}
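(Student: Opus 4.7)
The strategy is to identify $\Psi_n$ as the set-theoretic inverse of a natural $A$-linear ``decomposition'' map, and to transfer $A$-linearity along the identification. I would define
\[
\Theta_n\colon A \oplus \phi_*A \oplus \cdots \oplus \phi^{n-1}_*A \longrightarrow W_n(A), \qquad (b_0,\ldots,b_{n-1}) \longmapsto \sum_{j=0}^{n-1} V^j s_\phi(b_j).
\]
The first point to establish is that $\Theta_n$ is $A$-linear with respect to the twisted structures. The map $s_\phi\colon A \to W_{n-j}(A)$ is the $A$-module structure morphism on $W_{n-j}(A)$ and is therefore tautologically $A$-linear; applying the $\phi^j$-twist on both sides produces an $A$-linear map $\phi^j_*A \to \phi^j_*W_{n-j}(A)$. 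Iterating \cref{V-module-hom} shows $V^j\colon \phi^j_*W_{n-j}(A) \to W_n(A)$ is $A$-linear, so each summand $V^j s_\phi$ is $A$-linear, and hence so is $\Theta_n$.

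Next, I would verify the identity $\Theta_n \circ \Psi_n = \mathrm{id}_{W_n(A)}$. Writing $(a_0,\ldots,a_{n-1}) = \sum_{i=0}^{n-1} V^i [a_i]$ in $W_n(A)$ and applying \cref{thm:delta formula} to each $[a_i]$ (the remainder term $V^{n-i}\Delta_{n-i}(a_i)$ is killed after multiplying by $V^i$ in $W_n(A)$) gives
\[
V^i[a_i] = \sum_{j=i}^{n-1} V^j s_\phi(\Delta_{j-i}(a_i)),
\]
and exchanging the order of summation produces
\[
(a_0,\ldots,a_{n-1}) = \sum_{j=0}^{n-1} V^j s_\phi\left(\sum_{i=0}^{j} \Delta_{j-i}(a_i)\right) = \Theta_n\bigl(\Psi_n((a_0,\ldots,a_{n-1}))\bigr),
\]
where the inner sum is by construction the $j$-th coordinate of $\Psi_n((a_0,\ldots,a_{n-1}))$.

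Finally, I would show that $\Psi_n$ is bijective as a map of sets: the $j$-th coordinate of $\Psi_n((a_0,\ldots,a_{n-1}))$ equals $a_j$ plus a polynomial expression in $a_0,\ldots,a_{j-1}$ (via the $\Delta_s$), so the system is strictly triangular and can be inverted recursively. Combined with the $A$-linearity of $\Theta_n$ and the identity $\Theta_n \circ \Psi_n = \mathrm{id}$, this forces $\Theta_n$ to be an $A$-module isomorphism with inverse $\Psi_n$. The main point requiring care throughout is bookkeeping of the $\phi^j$-twisted $A$-module structures on the various Witt vector groups; once this is in place, the theorem reduces to the explicit formula in \cref{thm:delta formula} together with \cref{V-module-hom}.
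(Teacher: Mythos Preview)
Your proof is correct and takes essentially the same approach as the paper's: both rest on \cref{V-module-hom} for $A$-linearity and on the decomposition formula of \cref{thm:delta formula}. The paper packages it as an inductive splitting of the exact sequence $0 \to \phi_*W_{n-1}(A) \xrightarrow{V} W_n(A) \xrightarrow{R^{n-1}} A \to 0$ by $s_\phi$ (so $\alpha \mapsto (R^{n-1}\alpha,\Delta_{W_n}\alpha)$) and then iterates, whereas you unroll this induction by writing down the inverse $\Theta_n = \sum_j V^j s_\phi$ explicitly and checking $\Theta_n\circ\Psi_n=\mathrm{id}$ via \cref{thm:delta formula}; the content is identical.
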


\begin{proof}
For $n=1$, the assertion is clear, thus we assume $n \geq 2$.
By \cref{V-module-hom}, we obtain the exact sequence
\[
0 \to \phi_*W_{n-1} \xrightarrow{V} W_n(A) \xrightarrow{R^{n-1}} A \to 0
\]
of $A$-modules.
Since $s_\phi \colon A \to W_n(A)$ is a splitting map of $R^{n-1} \colon W_n(A) \to A$, we obtain an $A$-module isomorphism
\[
W_n(A) \xrightarrow{\sim} \phi_*W_{n-1}(A) \oplus A\ ;\ \alpha \mapsto (\Delta_{W_n}(\alpha),R^{n-1}(\alpha)).
\]
Repeating to take decompositions, we obtain the isomorphism
\[
\Psi_n \colon W_n(A) \xrightarrow{\sim} A \oplus \phi_*A \oplus \cdots \oplus \phi^{n-1}_*A,
\]
and $\Psi_n$ satisfies
\[
\Psi_n((a_0,\ldots,a_{n-1}))=(a_0,\Delta_1(a_0)+a_1,\ldots,\Delta_{n-1}(a_0)+\Delta_{n-2}(a_1)+\cdots+\Delta_1(a_{n-2})+a_{n-1}),
\]
as desired.
\end{proof}

\begin{proposition}\label{s_phi-circ-phi}
We use the notation introduced in \cref{notation:F-lift}.
The $A$-module homomorphism $s_\phi \circ \phi \colon A \to \phi_*W_n(A)$ induces $\overline{s_\phi \circ \phi} \colon A/pA \to \phi_*W_n(A)/pW_n(A)$.
Then $\overline{s_\phi \circ \phi}(\overline{a})=[a^p]$ for $a \in A$ and $\overline{a}$ is the image of $a$ by $A \to A/pA$.
\end{proposition}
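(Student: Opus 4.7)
The strategy is to split the desired congruence $s_\phi(\phi(a)) \equiv [a^p] \pmod{pW_n(A)}$ as $s_\phi(\phi(a)) \equiv s_\phi(a^p) \equiv [a^p]$. The first congruence is free: since $\phi$ is a Frobenius lift, $\phi(a) = a^p + p\delta$ for some $\delta \in A$, and the ring homomorphism $s_\phi$ preserves integer multiplication, so $s_\phi(\phi(a)) - s_\phi(a^p) = p \cdot s_\phi(\delta) \in pW(A)$, hence a fortiori in $pW_n(A)$.

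The second congruence I would obtain from \cref{thm:delta formula} applied to $a^p$. In $W_n(A)$ the tail term $V^n\Delta_n(a^p)$ drops, leaving
\[
[a^p] - s_\phi(a^p) = \sum_{k=1}^{n-1} V^k s_\phi\bigl(\Delta_k(a^p)\bigr).
\]
Since both $s_\phi$ and $V$ commute with multiplication by the integer $p$, it suffices to show $\Delta_k(a^p) \in pA$ for every $k \geq 1$; each summand then lies in $V^k(pW(A)) \subseteq pW_n(A)$.

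To establish $\Delta_k(a^p) \in pA$, I combine the two closed-form identities from \cref{thm:delta formula}, namely $\Delta_k(b) = \Delta_1(b^{p^{k-1}})/p^{k-1}$ and $\Delta_1(c) = (c^p - \phi(c))/p$, applied with $b = a^p$ to get
\[
\Delta_k(a^p) = \frac{a^{p^{k+1}} - \phi(a)^{p^k}}{p^k}.
\]
Substituting $\phi(a) = a^p + p\delta$ and expanding binomially, every cross-term $\binom{p^k}{j} p^j \delta^j a^{p(p^k-j)}$ with $j \geq 1$ has $p$-adic valuation $k - v_p(j) + j \geq k+1$, since $j \geq v_p(j) + 1$ for every $j \geq 1$. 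Hence $\phi(a)^{p^k} \equiv a^{p^{k+1}} \pmod{p^{k+1}}$, which gives $\Delta_k(a^p) \in pA$. The only computation requiring care is this last $p$-adic valuation estimate via Kummer's theorem on binomial coefficients, but it is routine, so no serious obstacle is expected.
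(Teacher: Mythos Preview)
Your proof is correct. The splitting $s_\phi(\phi(a)) \equiv s_\phi(a^p) \equiv [a^p]$ works, and the valuation estimate $v_p\bigl(\binom{p^k}{j}p^j\bigr) = k - v_p(j) + j \geq k+1$ for $1 \leq j \leq p^k$ is valid (since $p^{v_p(j)} \mid j$ forces $v_p(j) < j$), so indeed $\Delta_k(a^p) \in pA$ for all $k \geq 1$.

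The paper takes a different, more structural route. Instead of computing $\Delta_k(a^p)$ explicitly, it uses the multiplicativity of the Teichm\"uller map, $[a^p] = [a]^p$, together with the Leibniz-type rule for $\Delta_{W_n}$ (\cref{prop:delta map'}(3)). Iterating that rule on the $p$-fold product $[a]^p$ immediately gives
\[
\Delta_{W_n}([a^p]) \equiv p\, s_\phi(\phi(a^{p-1}))\,\Delta_{W_n}([a]) \equiv 0 \pmod{pW_{n-1}(A)},
\]
and then the isomorphism $\Psi_n$ yields $\Psi_n([a^p]) \equiv (a^p,0,\ldots,0) \equiv \Psi_n(s_\phi(\phi(a)))$. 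Your approach trades this one-line algebraic observation for a direct binomial-coefficient computation; it is a bit longer but entirely self-contained and does not rely on the Leibniz property, which may be useful if one wants to minimise the logical dependencies.
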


\begin{proof}
It is enough to show $\Psi_n(s_\phi \circ \phi(a)) \equiv \Psi_n([a^p]) \mod p$ for $a \in A$.
By \cref{prop:delta map'} (1), we have $\Delta_{W_n}(s_\phi \circ \phi(a))=0$, thus $\Psi_n(s_\phi \circ \phi(a))=(\phi(a),0,\ldots,0)$.
On the other hand, by \cref{prop:delta map'} (3), we have
\[
\Delta_{W_n}([a^p]) \equiv ps_\phi \circ \phi(a^{p-1})\Delta_{W_n}([a]) \equiv 0 \mod pW_{n-1}(A).
\]
Thus, we have $\Psi_n([a^p])=(a^p,0,\ldots,0)$.
Since $\phi(a) \equiv a^p \mod p$, we obtain $\Psi_n(s_\phi \circ \phi(a)) \equiv \Psi_n([a^p]) \mod p$, as desired.
\end{proof}

\section{Quasi-$F$-splitting in mixed characteristic}

\subsection{Definition of quasi-$F$-splitting}
In this subsection, we introduce the notion of quasi-$F$-splitting for rings in mixed characteristic and study first properties of quasi-$F$-splitting.

\begin{proposition}\label{Phi-well-def}
Let $R$ be a $\Z_{(p)}$-algebra and $n$ a positive integer.
We set $\overline{R}:=R/pR$ and $\overline{W}_n(R):=W_n(R)/pW_n(R)$.
Then the map 
\[
\Phi_{R,n} \colon \var{R} \to \var{W}_n(R)\ ;\ a \mapsto [\tilde{a}^p]
\]
is well-defined and a ring homomorphism, where $\tilde{a} \in R$ is a lift of $a \in \var{R}$.
\end{proposition}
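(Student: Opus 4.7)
The plan is to reduce to the $p$-torsion-free case with a Frobenius lift, where the assertion is essentially contained in \cref{s_phi-circ-phi}, and then to descend along a surjection.

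First I would set $\wt{R} := \Z[Z_r \mid r \in R]$, equipped with the surjective ring homomorphism $\pi \colon \wt{R} \to R$ defined by $Z_r \mapsto r$, and the Frobenius lift $\phi \colon \wt{R} \to \wt{R}$ defined by $\phi(Z_r) = Z_r^p$. Since $\wt{R}$ is $p$-torsion free, \cref{s_phi} provides a ring homomorphism $s_\phi \colon \wt{R} \to W_n(\wt{R})$, and \cref{s_phi-circ-phi} then shows that the induced map $\overline{s_\phi \circ \phi} \colon \var{\wt{R}} \to \var{W}_n(\wt{R})$ is a ring homomorphism satisfying $\overline{s_\phi \circ \phi}(\var{\alpha}) = [\alpha^p]$ for any lift $\alpha \in \wt{R}$ of $\var{\alpha}$. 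This establishes the proposition for $\wt{R}$: the rule $\var{\alpha} \mapsto [\alpha^p]$ is a well-defined ring homomorphism from $\var{\wt{R}}$ to $\var{W}_n(\wt{R})$.

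Next I would descend to $R$. For well-definedness, given two lifts $a, a' \in R$ of $\var{a} \in \var{R}$, write $a - a' = pb$ for some $b \in R$ and set $\alpha := Z_a$ and $\alpha' := Z_a - p Z_b$ in $\wt{R}$. Then $\pi(\alpha) = a$, $\pi(\alpha') = a'$, and crucially $\alpha - \alpha' = p Z_b \in p \wt{R}$, so $\var{\alpha} = \var{\alpha'}$ in $\var{\wt{R}}$. The result for $\wt{R}$ gives $[\alpha^p] \equiv [\alpha'^p] \pmod{p W_n(\wt{R})}$, and applying the functorial ring homomorphism $W_n(\pi)\colon W_n(\wt{R}) \to W_n(R)$ (which is $\Z$-linear and therefore sends $p W_n(\wt{R})$ into $p W_n(R)$) yields $[a^p] \equiv [a'^p] \pmod{p W_n(R)}$. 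For the ring-homomorphism property, multiplicativity and the identity are immediate from $[xy] = [x][y]$ and $[1] = 1$ in $W_n(R)$. For additivity, given $a, b \in R$ with lifts $\alpha := Z_a$, $\beta := Z_b$, the additivity of $\overline{s_\phi \circ \phi}$ on $\wt{R}$ gives $[(\alpha + \beta)^p] \equiv [\alpha^p] + [\beta^p] \pmod{p W_n(\wt{R})}$, and this descends via $W_n(\pi)$ to the corresponding relation in $\var{W}_n(R)$.

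I do not expect a substantial obstacle. The one point that deserves care is the choice of lifts in the descent step: $\alpha$ and $\alpha'$ must be chosen so that $\alpha - \alpha' \in p\wt{R}$ (which forces us to also lift the element $b$), and then the well-definedness on the base $R$ is a direct consequence of the well-definedness on $\wt{R}$. All the real content sits in \cref{s_phi-circ-phi}, whose proof relies on the explicit decomposition $\Psi_n$ from \cref{decomposition-W}; everything else is formal functoriality of the Witt vector construction.
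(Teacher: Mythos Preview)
Your proposal is correct and follows essentially the same route as the paper: both introduce an auxiliary polynomial ring with a Frobenius lift (the paper uses $\Z_{(p)}[X_f \mid f\in R]$, you use $\Z[Z_r \mid r\in R]$), invoke \cref{s_phi-circ-phi} to get $\Phi$ upstairs, and then descend along the canonical surjection. The only cosmetic difference is that the paper phrases the descent as ``$\Phi_{A,n}$ kills the kernel of $\overline{\pi}$, hence factors,'' whereas you verify well-definedness and additivity by explicit choices of lifts; these are two ways of saying the same thing.
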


\begin{proof}
Let $A:=\Z_{(p)}[X_f \mid f \in R]$ and $\phi \colon A \to A$ be the $\Z_{(p)}$-algebra satisfying $\phi(X_f)=X^p_f$ for every $f \in R$.
Let $\pi \colon A \to R$ be the $\Z_{(p)}$-algebra homomorphism defined by $\pi(X_f)=f$.
By \cref{s_phi-circ-phi}, the map 
\[
\Phi_{A,n} \colon \var{A}:=A/pA \to \var{W}_n(A):=W_n(A)/pW_n(A)\ ;\ a \mapsto [\tilde{a}^p]
\]
is well-defined and a ring homomorphism, where $\tilde{a} \in A$ is a lift of $a \in A$.
We take $a,b \in A$ such that $\pi(a)=0$.
Then we have $[(a+pb)^p] \equiv [a^p] \mod p$ and $[a^p]$ is contained in the kernel of $W_n(\pi)$.
Thus, $\Phi_{A,n}$ induces the ring homomorphism $\Phi_{R,n}$, as desired.
\end{proof}

\begin{notation}\label{notation:p-free-Z_p-alg}
Let $R$ be a $\Z_{(p)}$-algebra.
We set $\overline{R}:=R/pR$ and 
\[
\overline{W}_n(R):=W_n(R)/pW_n(R)
\]
for each positive integer $n$.
We defined an $\var{R}$-module $Q_{R,n}$ by $Q_{R,n}:=\var{W}_n(R)$ as group and the $\var{R}$-module structure is endowed by the ring homomorphism $\Phi_{R,n}$ for each positive integer $n$.
\end{notation}

\begin{proposition}\label{exact-Q}
We use the notation introduced in \cref{notation:p-free-Z_p-alg}.
We assume $R$ is $p$-torsion free.
For positive integers $n,m$, we obtain the following exact sequence as $\var{R}$-modules:
\[
0 \to F^m_*Q_{R,n} \xrightarrow{V^m} Q_{R,n+m} \xrightarrow{R^{n}} Q_{R,m} \to 0.
\]
In particular, if $R$ is Noether and $\var{R}$ is $F$-finite, then $Q_{R,n}$ is a finite $\var{R}$-module for every positive integer $n$.
\end{proposition}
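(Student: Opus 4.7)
The plan is to deduce the sequence from the standard short exact sequence of abelian groups
\[
0 \to W_n(R) \xrightarrow{V^m} W_{n+m}(R) \xrightarrow{R^n} W_m(R) \to 0,
\]
which comes from the presentation $W_\ell(R)=W(R)/V^\ell W(R)$ together with the fact that $V^m$ induces an isomorphism $W(R)/V^n W(R) \xrightarrow{\sim} V^m W(R)/V^{n+m}W(R)$ (the kernel of $R^n$). I would then reduce modulo $p$ and separately verify (i) exactness of the reduced sequence and (ii) $\var{R}$-linearity with respect to the Frobenius twist on the left.

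Step (i) reduces, via the $\Tor$ long exact sequence, to showing that $W_m(R)$ is $p$-torsion free when $R$ is. The key lemma to prove is: if $\beta\in W(R)$ satisfies $p\beta\in V^m W(R)$, then $\beta\in V^m W(R)$. I plan to prove this by induction on the component index using the ghost map $\varphi$, which is injective because $R$ is $p$-torsion free. Since $\varphi_n(p\beta)=p\varphi_n(\beta)$, the vanishing of the first $m$ ghost components of $p\beta$ combined with the triangular Witt polynomial identity $\varphi_n=\sum_{i\le n} p^i X_i^{p^{n-i}}$ and $p$-torsion freeness of $R$ forces $\beta_0=\cdots=\beta_{m-1}=0$ inductively. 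This is the main technical point of the proof; once it is settled, $W_m(R)[p]=0$ and the mod-$p$ sequence is exact.

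For step (ii), the $\var{R}$-action on $Q_{R,n+m}$ is multiplication by $[\tilde a^p]$, while on $F^m_*Q_{R,n}$ it is multiplication by $[\tilde a^{p^{m+1}}]$. The identity $[\tilde a^p]\cdot V^m(\alpha)=V^m([\tilde a^{p^{m+1}}]\alpha)$ follows directly from \cref{rule-witt}(1), so $V^m\colon F^m_*Q_{R,n}\to Q_{R,n+m}$ is $\var{R}$-linear. For $R^n$, multiplicativity together with $R^n([\tilde a^p])=[\tilde a^p]$ in $W_m(R)$ give linearity immediately.

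Finally, for the finiteness statement, I would apply the sequence with $(n,m)$ specialized to $(1,n-1)$, yielding
\[
0 \to F^{n-1}_*Q_{R,1} \to Q_{R,n} \to Q_{R,n-1} \to 0,
\]
and note that $Q_{R,1}=F_*\var{R}$ is finite over $\var{R}$ by $F$-finiteness, hence so is $F^{n-1}_*Q_{R,1}=F^n_*\var{R}$. Induction on $n$, with base case $n=1$ immediate, concludes.
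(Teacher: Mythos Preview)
Your proposal is correct and follows essentially the same approach as the paper: both verify $\var{R}$-linearity of $V^m$ via \cref{rule-witt}, reduce exactness to the statement that $W_s(R)$ is $p$-torsion free, and deduce finiteness by induction from $Q_{R,1}=F_*\var{R}$. The paper simply asserts the $p$-torsion freeness of $W_s(R)$ without proof, whereas you supply the ghost-component argument; otherwise the two proofs are the same.
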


\begin{proof}
First, we prove that $V^m \colon F^m_*Q_{R,n} \to Q_{R,n+m}$ is an $\var{R}$-module homomorphism.
We take $\alpha \in W_{n}(R)$ and $b \in R$.
Then we have $[b^p]V^m\alpha=V([b^{p^{m+1}}]\alpha)$ by \cref{rule-witt}, thus $V^m$ is an $\var{R}$-module homomorphism.
Next, we prove the exactness.
It is enough to show that the sequence
\[
0 \to \var{W}_{n}(R) \xrightarrow{V^m} \var{W}_{n+m}(R) \xrightarrow{R^n} \var{W}_m(R) \to 0
\]
is exact as groups.
It follows from the facts that $W_s(R)$ is $p$-torsion free for each positive integer $s$ and we have the exact sequence
\[
0 \to W_n(R) \xrightarrow{V^m} W_{n+m}(R) \xrightarrow{R^{n}} W_m(R) \to 0
\]
as groups.
The last assertion follows from the exact sequence and $Q_{R,1}=F_*\var{R}$.
\end{proof}

\begin{definition}\label{def:Q}\textup{(cf.~\cite{Yobuko19})}
We use the notation introduced in \cref{notation:p-free-Z_p-alg}.
We assume $R$ is Noether and $\var{R}$ is $F$-finite.
\begin{itemize}
    \item Let $n$ be a positive integer.
    We say that $R$ is \emph{$n$-quasi-$F$-split} if $\Phi_{R,n} \colon \var{R} \to Q_{R,n}$ splits as $\var{R}$-modules.
    Furthermore, we say that $R$ is \emph{quasi-$F$-split} if $R$ is $n$-quasi-$F$-split for some positive integer $n$.
    \item The \emph{quasi-$F$-splitting height} $\mathrm{ht}(R)$ is defined by 
    \[
    \mathrm{ht}(R):=\mathrm{inf}\{n \geq 1 \mid \text{$R$ is $n$-quasi-$F$-split}\}
    \]
    if $R$ is quasi-$F$-split, and $\mathrm{ht}(R):=\infty$ if $R$ is not quasi-$F$-split.
        \item Let $n$ be a positive integer and $f \in R$ a non-zero divisor.
    We define an $\var{R}$-module $Q^f_{R,n}$ by 
    \[
    Q^f_{R,n}:=\Ker(Q_{R,n} \to Q_{R/f,n}).
    \]
    We note that 
    \[
    Q^f_{R,n}=\{(fa_0,\ldots,fa_{n-1}) \in \var{W}_n(R)\mid a_0,\ldots a_{n-1} \in R\}
    \]
    as sets.
    We say that $(R,f)$ is \emph{purely $n$-quasi-$F$-split} if the map
    \[
    \Phi^f_{R,n} \colon \var{R} \to Q^f_{R,n}\ ;\ a \mapsto [af]^p 
    \]
    splits as $\var{R}$-modules.
    \item Let $n,e$ be positive integers.
    We define $\var{R}$-module $Q_{R,n,e}$ by the following pushout diagram as $\var{R}$-modules:
    \[
    \begin{tikzcd}
        F^n_*\var{R} \arrow[r,"F^{e-1}"] \arrow[d,"V^{n-1}"] & F^{n+e-1}_*\var{R} \arrow[d] \\
        Q_{R,n} \arrow[r] & Q_{R,n,e} \arrow[ul, phantom, "\ulcorner", very near start]
    \end{tikzcd}
    \]
    and the induced $\var{R}$-module homomorphism is denoted by
    \[
    \Phi_{R,n,e} \colon R \xrightarrow{\Phi_{R,n}} Q_{R,n} \to Q_{R,n,e}.
    \]
    \item We say that $R$ is \emph{quasi-$(F,F^{\infty})$-split} if $R$ is quasi-$F$-split and $\Phi_{R,\mathrm{ht}(R),e}$ splits as $\var{R}$-modules for every positive integer $e$. 
\end{itemize}
\end{definition}

\begin{remark}
If $R$ is an $\F_p$-algebra, then the notion of quasi-$F$-splitting coincides with that in \cite{Yobuko19}.
\end{remark}

\begin{proposition}\label{compsre-posi-case}
We use the notation introduced in \cref{notation:p-free-Z_p-alg}.
We assume $R$ is Noetherian $p$-torsion free and  $\var{R}$ is $F$-finite.
Then we have $\mathrm{ht}(R) \leq \mathrm{ht}(\var{R})$.
Furthermore, if $R$ is normal and $\sht(R)=\sht(\var{R})=\sht^{e}(\var{R})$ for every $e \geq 1$, where $\sht^e(\var{R})$ is defined in \cite{TWY}*{Definition~3.5}, then $R$ is quasi-$(F,F^{\infty})$-split.
\end{proposition}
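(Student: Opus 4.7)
The plan is to prove both assertions by exploiting functoriality of the Witt and pushout constructions: the surjection $R \twoheadrightarrow \var{R}$ should induce compatible comparison maps $Q_{R,n} \to Q_{\var{R},n}$ and $Q_{R,n,e} \to Q_{\var{R},n,e}$, through which splittings produced on the positive-characteristic side can be pulled back to $R$.

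For the inequality $\sht(R) \leq \sht(\var{R})$, I would first construct the comparison map as follows. The quotient $R \to \var{R}$ induces a ring homomorphism $W_n(R) \to W_n(\var{R})$, compatible with $V$ and the restriction maps, which descends modulo $p$ to a surjective group homomorphism $\pi_n \colon Q_{R,n} \to Q_{\var{R},n}$. A direct unwinding of the formula $[\tilde{a}^p] \mapsto [a^p]$ from \cref{Phi-well-def} shows $\pi_n \circ \Phi_{R,n} = \Phi_{\var{R},n}$, which simultaneously proves that $\pi_n$ is $\var{R}$-linear for the natural $\var{R}$-module structures on source and target. Assuming $\sht(\var{R}) = n < \infty$ and choosing an $\var{R}$-linear splitting $\sigma$ of $\Phi_{\var{R},n}$, the composition $\sigma \circ \pi_n$ is a splitting of $\Phi_{R,n}$, whence $\sht(R) \leq n$.

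Next, for the quasi-$(F,F^\infty)$-splitting statement, suppose $\sht(R) = \sht(\var{R}) = n$ and $\sht^e(\var{R}) = n$ for every $e \geq 1$. Since $Q_{(-),n,e}$ is defined as a pushout of $V^{n-1}$ and $F^{e-1}$, and both operations are natural in the underlying ring, the map $\pi_n$ together with the identity maps on $F^n_*\var{R}$ and $F^{n+e-1}_*\var{R}$ fit into a morphism of pushout diagrams; invoking the universal property produces an $\var{R}$-linear map $\pi_{n,e} \colon Q_{R,n,e} \to Q_{\var{R},n,e}$ satisfying $\pi_{n,e} \circ \Phi_{R,n,e} = \Phi_{\var{R},n,e}$. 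Under the normality hypothesis on $R$ (which ensures we may identify our notion of splittability of $\Phi_{\var{R},n,e}$ with the formulation used in the \cite{TWY} framework applied to $\var{R}$), the equality $\sht^e(\var{R}) = n$ translates into the existence of an $\var{R}$-linear splitting $\tau_e$ of $\Phi_{\var{R},n,e}$. Then $\tau_e \circ \pi_{n,e}$ is a splitting of $\Phi_{R,n,e}$, and since this holds for every $e \geq 1$ while $n = \sht(R)$ by assumption, $R$ is quasi-$(F,F^\infty)$-split.

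The main obstacle I anticipate is the bookkeeping required to verify $\pi_{n,e} \circ \Phi_{R,n,e} = \Phi_{\var{R},n,e}$, which forces one to check compatibility along both the Verschiebung edge and the Frobenius edge of the pushout square simultaneously, rather than treating them separately. A secondary issue is the precise comparison between the definition of $\sht^e$ in \cite{TWY}*{Definition~3.5} and the splittability of $\Phi_{\var{R},n,e}$ as formulated here; this is exactly the point at which the normality of $R$ (through its impact on the behavior of $\var{R}$) is invoked, in order to align the two conventions and legitimately conclude from $\sht^e(\var{R}) = n$ the existence of the desired splittings $\tau_e$.
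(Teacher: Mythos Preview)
Your argument for the first assertion is the paper's: both use the factorization $\Phi_{\var R,n} = \pi_n \circ \Phi_{R,n}$ to pull back a splitting.

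For the second assertion your route diverges from the paper's, and the detour leaves the exact gap you flag as your ``secondary issue'' unresolved. You pass through the intermediate object $Q_{\var R,n,e}$ (the pushout construction applied to $\var R$) and then must argue that $\sht^e(\var R)=n$ produces a splitting $\tau_e$ of $\Phi_{\var R,n,e}$. The paper instead maps $Q_{R,n,e}$ directly to the TWY object $Q^e_{\var R,n}$, using a commutative square of the form
\[
\begin{tikzcd}
F^n_*\var R \arrow[r] \arrow[d,"V^{n-1}"'] & F^{n+e-1}_*\var R \arrow[d,"V^{n-1}"] \\
Q_{R,n} \arrow[r] & Q^e_{\var R,n}
\end{tikzcd}
\]
to obtain $Q_{R,n,e} \to Q^e_{\var R,n}$ from the pushout property. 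It then never interprets $\sht^e(\var R)=n$ as a splitting statement at all: rather, that hypothesis is used in its native form, namely surjectivity of the evaluation map out of $\Hom_{W_n(\var R)}(Q^e_{\var R,n}, W_n\omega_{\var R}(-K_{\var R}))$, and normality enters precisely to furnish the identification
\[
\Hom_{W_n(\var R)}(Q_{R,n,e}, W_n\omega_{\var R}(-K_{\var R})) \;\simeq\; \Hom_{\var R}(Q_{R,n,e}, \var R),
\]
so that the resulting surjectivity onto $\var R$ exhibits the desired splitting of $\Phi_{R,n,e}$.

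In short, the concrete content you are missing behind ``normality lets us align the two conventions'' is exactly this Witt-dualizing identification; without it (or an equivalent statement), your step from $\sht^e(\var R)=n$ to the existence of $\tau_e$ remains an assertion rather than an argument. Your proposed factorization through $Q_{\var R,n,e}$ is not wrong, but it is an unnecessary extra layer: once you have to compare with $Q^e_{\var R,n}$ anyway, you may as well map $Q_{R,n,e}$ there directly, as the paper does.
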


\begin{proof}
The first assertion follows from the following diagram:
\[
\begin{tikzcd}[row sep=0.3cm, column sep=2cm]
    \var{R} \arrow[r,midway,"\Phi_{R,n}"] \arrow[rd,near end,"\Phi_{\var{R},n}"'] & Q_{R,n} \arrow[d] \\
     & Q_{\var{R},n}
\end{tikzcd}
\]
for each positive integer $n$.
We set $n:=\sht(R)$.
Since we have the commutative diagram
\[
\begin{tikzcd}
    F^e_*\var{R} \arrow[r,"F^e"] \arrow[d,"V^{n-1}"] & F^{n+e-1}_*\var{R}  \arrow[d,"V^{n-1}"] \\
    Q_{R,n} \arrow[r,"F^e"] & Q^e_{\var{R},n},
\end{tikzcd}
\]
we obtain the $\var{R}$-module homomorphism $Q_{R,n,e} \to Q^e_{\var{R},n}$, and in particular, we have 
\begin{align*}
    \Hom_{W_n(\var{R})}(Q^e_{\var{R},n},W_n\omega_{\var{R}}(-K_{\var{R}})) &\to \Hom_{W_n(\var{R})}(Q_{R,n,e},W_n\omega_{\var{R}}(-K_{\var{R}})) \\
    &\simeq \Hom_{\var{R}}(Q_{R,n,e},\var{R}) \to \Hom_{\var{R}}(\var{R},\var{R}) \simeq \var{R}.
\end{align*}
Since the composition is surjective by $\sht^e(\var{R})=n$, the homomorphism $\var{R} \to Q_{R,n,e}$ splits, as desired.
\end{proof}

\begin{proposition}\label{qfs:comp}
We use the notation introduced in \cref{notation:p-free-Z_p-alg} and assume $(R,\m)$ is Noether local $p$-torsion free and $\var{R}$ is $F$-finite.
Then we have $\sht(R)=\sht(\widehat{R})$, where $\widehat{R}$ is the $m$-adic completion of $R$.
Furthermore, $R$ is quasi-$(F,F^\infty)$-split if and only if so is $\widehat{R}$.
\end{proposition}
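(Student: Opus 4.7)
The plan is to prove both assertions by showing that the modules $Q_{R,n}$ and $Q_{R,n,e}$ are compatible with the faithfully flat base change $\var{R} \to \var{\widehat{R}}$, so that splittings lift and descend along completion. Since $R$ is Noetherian, $R \to \widehat{R}$ is faithfully flat; reducing mod $p$ identifies $\var{\widehat{R}}$ with the $\m$-adic completion of $\var{R}$, and the map $\var{R} \to \var{\widehat{R}}$ is again faithfully flat. The key step will be to produce a natural $\var{\widehat{R}}$-module isomorphism $Q_{R,n} \otimes_{\var{R}} \var{\widehat{R}} \xrightarrow{\sim} Q_{\widehat{R},n}$ compatible with $\Phi_{R,n} \otimes \mathrm{id}$ and $\Phi_{\widehat{R},n}$.

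For $n=1$ this reduces to $F_*\var{R} \otimes_{\var{R}} \var{\widehat{R}} \cong F_*\var{\widehat{R}}$, which I would verify using $F$-finiteness of $\var{R}$ (so $F_*\var{R}$ is a finite $\var{R}$-module whose tensor product with $\var{\widehat{R}}$ is its $\m$-adic completion) together with the identity $F^{-1}(\m)=\m$ in the local ring $\var{R}$. For general $n$ I would induct using the short exact sequence
\[
0 \to F_*Q_{R,n-1} \xrightarrow{V} Q_{R,n} \xrightarrow{R^{n-1}} F_*\var{R} \to 0
\]
from \cref{exact-Q}, tensoring with the flat module $\var{\widehat{R}}$ and comparing with the corresponding sequence for $\widehat{R}$ via the natural map $W_n(R) \to W_n(\widehat{R})$; the five lemma delivers the isomorphism. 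Given this, $n$-quasi-$F$-splitting of $R$ is equivalent to surjectivity of the evaluation-at-$1$ map $\Hom_{\var{R}}(Q_{R,n}, \var{R}) \to \var{R}$; since $Q_{R,n}$ is finite over $\var{R}$, flat base change for $\Hom$ identifies this map after $- \otimes_{\var{R}} \var{\widehat{R}}$ with the analogous map over $\var{\widehat{R}}$, and faithful flatness of $\var{R} \to \var{\widehat{R}}$ shows one is surjective if and only if the other is, yielding $\sht(R) = \sht(\widehat{R})$.

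For the quasi-$(F,F^\infty)$-split statement, the module $Q_{R,n,e}$ is a pushout of $F^{n+e-1}_*\var{R}$ and $Q_{R,n}$ over $F^n_*\var{R}$. Since tensor product commutes with pushouts and each factor is base-change compatible by the above, the natural map $Q_{R,n,e}\otimes_{\var{R}}\var{\widehat{R}} \to Q_{\widehat{R},n,e}$ is an isomorphism intertwining $\Phi_{R,n,e}$ and $\Phi_{\widehat{R},n,e}$, so the same $\Hom$ and faithful flatness argument applies with $n = \sht(R) = \sht(\widehat{R})$ to obtain the equivalence of quasi-$(F,F^\infty)$-splitting. The principal obstacle throughout is verifying the base change compatibility of the Witt-vector pushforward $Q_{R,n}$; all subsequent steps are formal consequences of that identification together with standard faithful-flatness arguments, and $F$-finiteness of $\var{R}$ is precisely what makes this compatibility hold.
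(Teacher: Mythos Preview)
Your proposal is correct and follows essentially the same approach as the paper: both prove $Q_{R,n}\otimes_{\var R}\var{\widehat R}\simeq Q_{\widehat R,n}$ by induction on $n$ via the exact sequence of \cref{exact-Q}, then deduce the equalities of heights and the quasi-$(F,F^\infty)$ equivalence by faithful flatness. The only cosmetic difference is that for $Q_{R,n,e}$ the paper passes through the cokernel exact sequence $0\to Q_{R,n}\to Q_{R,n,e}\to F^e_*B^{e-1}_{\var R}\to 0$, whereas you invoke directly that tensor products preserve pushouts; these are equivalent formal arguments.
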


\begin{proof}
First, we obtain $F^e_*Q_{R,n} \otimes_{R} \widehat{R} \simeq F^e_*Q_{\widehat{R},n}$ for positive integers $e,n$ by \cref{exact-Q} and an inductive argument.
Therefore, we obtain the first assertion.
Furthermore, since taking pushout does not change the cokernels, we obtain the exact sequence
\[
0 \to Q_{R,n} \to Q_{R,n,e} \to F^e_*B^{e-1}_{\var{R}} \to 0,
\]
thus we obtain $Q_{R,n,e} \otimes_{R} \widehat{R} \simeq Q_{\widehat{R},e,n}$, and in particular, we have the last assertion.
\end{proof}

\begin{proposition}\label{CM-Q}
We use the notation introduced in \cref{notation:p-free-Z_p-alg} and assume $R$ is $p$-torsion free.
If $R$ is Cohen-Macaulay, then so is $Q_{R,n}$ for every positive integer $n$.
\end{proposition}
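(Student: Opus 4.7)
The plan is to argue by induction on $n$, using the short exact sequence from \cref{exact-Q} with $m=1$:
\[
0 \to F_*Q_{R,n} \xrightarrow{V} Q_{R,n+1} \xrightarrow{R^{n}} Q_{R,1} = F_*\bar{R} \to 0.
\]
Since Cohen-Macaulayness is a local property of modules, I will localize at an arbitrary prime and assume from the outset that $(\bar{R},\bar{\m})$ is local of dimension $d$. The hypothesis that $R$ is $p$-torsion free Cohen-Macaulay forces $p$ to be a nonzerodivisor, so $\bar{R}=R/pR$ is itself Cohen-Macaulay of dimension $d$. Observe also that every $Q_{R,n}$ has full support in $\Spec\bar{R}$, since the quotient $Q_{R,n}\to F_*\bar{R}$ is surjective and $F_*\bar{R}$ has full support.

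For the base case $n=1$ I need $F_*\bar{R}$ to be Cohen-Macaulay over $\bar{R}$. This reduces to the standard fact that the Frobenius pushforward preserves depth: a sequence $x_1,\dots,x_r$ is $F_*M$-regular as an $\bar{R}$-module sequence if and only if $x_1^p,\dots,x_r^p$ is $M$-regular, and the latter is equivalent to regularity of $x_1,\dots,x_r$ on $M$ because powers of a regular sequence are again regular. Taking $M=\bar{R}$ gives $\depth F_*\bar{R}=\depth\bar{R}=d$, so $F_*\bar{R}$ is Cohen-Macaulay of dimension $d$.

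For the inductive step, assume $Q_{R,n}$ is Cohen-Macaulay of dimension $d$. By the Frobenius depth-invariance just recalled, $F_*Q_{R,n}$ is also Cohen-Macaulay of dimension $d$. Applying the depth lemma to the displayed short exact sequence yields
\[
\depth Q_{R,n+1} \;\geq\; \min\bigl(\depth F_*Q_{R,n},\;\depth F_*\bar{R}\bigr) \;=\; d.
\]
On the other hand $\dim Q_{R,n+1}=d$ since its support is the union of the supports of the two outer modules, each equal to $\Spec\bar{R}$. Combined with the inequality $\depth\leq\dim$, this forces $\depth Q_{R,n+1}=\dim Q_{R,n+1}=d$, completing the induction. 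The only point that requires verification is the Frobenius-invariance of depth, which is entirely standard; I do not anticipate a substantive obstacle in this argument.
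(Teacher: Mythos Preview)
Your proposal is correct and follows essentially the same approach as the paper: induction on $n$ via the short exact sequence of \cref{exact-Q} with $m=1$, reducing the Cohen--Macaulayness of $Q_{R,n+1}$ to that of $F_*Q_{R,n}$ and $F_*\bar{R}$. The paper's argument is terser (it simply asserts that an extension of Cohen--Macaulay modules of the same dimension is Cohen--Macaulay), while you spell out the depth lemma and the Frobenius depth-invariance explicitly; the content is the same.
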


\begin{proof}
We prove the assertion by induction on $n$.
Since $R$ is Cohen-Macaulay and $p$-torsion free, the ring $\var{R} \simeq W_{R,1}$ is Cohen-Macaulay.
We take an integer $n \geq 2$.
By \cref{exact-Q}, we obtain an exact sequence 
\[
0 \to F_*Q_{R,n-1} \to Q_{R,n} \to \var{R} \to 0
\]
as $\var{R}$-modules.
By the induction hypothesis, the modules $F_*Q_{R,n-1}$ and $\var{R}$ are Cohen-Macaulay, thus so is $Q_{R,n}$, as desired.
\end{proof}

\begin{theorem}\label{thm:adj-inv-qfs}
We use the notation introduced in \cref{notation:p-free-Z_p-alg} and assume $(R,\m)$ is local.
Let $f \in \m$ be a regular element such that $R/f$ is $p$-torsion free.
We assume $R$ is Gorenstein.
Then $(R,f)$ is purely quasi-$F$-split if and only if $R/f$ is quasi-$F$-split. 
\end{theorem}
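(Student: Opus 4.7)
The strategy is to prove the equivalence level by level in $n$: that $(R,f)$ is purely $n$-quasi-$F$-split if and only if $R/f$ is $n$-quasi-$F$-split; the statement then follows by taking ``for some $n$'' on both sides. Starting from the tautological short exact sequence
\[
0 \to Q^f_{R,n} \to Q_{R,n} \to Q_{R/f,n} \to 0
\]
of $\overline{R}$-modules, I would apply $\Hom_{\overline{R}}(-,\overline{R})$. Two vanishing statements, both drawing on the Gorenstein hypothesis, will make the resulting long exact sequence collapse. Since $p$ is $R$-regular, $\overline{R}$ is Gorenstein; since $R/f$ is $p$-torsion free, the sequence $(f,p)$ is $R$-regular, so $f$ is regular on $\overline{R}$. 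As $Q_{R/f,n}$ is $f$-torsion (its $\overline{R}$-action factors through $\overline{R/f}$), we obtain $\Hom_{\overline{R}}(Q_{R/f,n},\overline{R})=0$; applying $\Hom_{\overline{R}}(Q_{R/f,n},-)$ to $0\to \overline{R}\xrightarrow{f}\overline{R}\to \overline{R/f}\to 0$ also yields a natural isomorphism $\Ext^1_{\overline{R}}(Q_{R/f,n},\overline{R})\simeq \Hom_{\overline{R/f}}(Q_{R/f,n},\overline{R/f})$. By \cref{CM-Q} and induction along the sequences of \cref{exact-Q}, the module $Q_{R,n}$ is a maximal Cohen-Macaulay module of full support over the Gorenstein ring $\overline{R}$, so $\Ext^1_{\overline{R}}(Q_{R,n},\overline{R})=0$. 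The long exact sequence therefore collapses to
\[
0 \to \Hom_{\overline{R}}(Q_{R,n},\overline{R}) \to \Hom_{\overline{R}}(Q^f_{R,n},\overline{R}) \xrightarrow{\psi} \Hom_{\overline{R/f}}(Q_{R/f,n},\overline{R/f}) \to 0.
\]

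Next I would identify $\psi$ explicitly and compare it with the evaluation maps. For $\phi\in \Hom_{\overline{R}}(Q^f_{R,n},\overline{R})$ and $\bar\alpha\in Q_{R/f,n}$ with a chosen lift $\alpha\in Q_{R,n}$, the element $[f^p]\alpha=\Phi_{R,n}(f)\cdot\alpha$ lies in $Q^f_{R,n}$, and the formula $\psi(\phi)(\bar\alpha):=\overline{\phi([f^p]\alpha)}\in \overline{R/f}$ is independent of the lift (a different lift changes $\phi([f^p]\alpha)$ by an element of $\phi(f\cdot Q^f_{R,n})\subseteq f\overline{R}$) and is $\overline{R/f}$-linear; comparison with the connecting homomorphism in the Hom-Ext sequence shows that this is indeed $\psi$. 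Specializing to $\bar\alpha=\Phi_{R/f,n}(1)=[1]$ with lift $[1]\in Q_{R,n}$ gives $\psi(\phi)(\Phi_{R/f,n}(1))=\overline{\phi([f^p])}=\overline{\phi(\Phi^f_{R,n}(1))}$, so the square
\[
\begin{tikzcd}
\Hom_{\overline{R}}(Q^f_{R,n},\overline{R}) \arrow[r,"\psi"] \arrow[d] & \Hom_{\overline{R/f}}(Q_{R/f,n},\overline{R/f}) \arrow[d] \\
\overline{R} \arrow[r] & \overline{R/f}
\end{tikzcd}
\]
commutes, where the verticals are evaluation at $\Phi^f_{R,n}(1)$ and at $\Phi_{R/f,n}(1)$, and the bottom arrow is the surjection $\overline{R}\to \overline{R/f}$.

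From this diagram both directions are immediate. If $\phi$ splits $\Phi^f_{R,n}$, then $\psi(\phi)$ splits $\Phi_{R/f,n}$. Conversely, given a splitting $\bar\phi$ of $\Phi_{R/f,n}$, the surjectivity of $\psi$ provides $\phi\in \Hom_{\overline{R}}(Q^f_{R,n},\overline{R})$ with $\psi(\phi)=\bar\phi$; then $\phi(\Phi^f_{R,n}(1))\equiv 1\pmod{f\overline{R}}$, and since $f\in\m$ this element is a unit of $\overline{R}$, so rescaling $\phi$ by its inverse yields a splitting of $\Phi^f_{R,n}$. The main technical point is the maximal Cohen-Macaulay property of $Q_{R,n}$ over $\overline{R}$, which is what drives the $\Ext^1$ vanishing and is where the Gorenstein assumption is indispensable; the identification of $\psi$ via the connecting homomorphism is formal but requires a careful chase.
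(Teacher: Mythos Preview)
Your proof is correct and is essentially the local-duality mirror of the paper's argument. The paper applies local cohomology to the commutative diagram
\[
\begin{tikzcd}
0 \arrow[r] & \overline{R} \arrow[r,"\cdot f"] \arrow[d,"\Phi^f_{R,n}"] & \overline{R} \arrow[r] \arrow[d,"\Phi_{R,n}"] & \overline{R/f} \arrow[r] \arrow[d,"\Phi_{R/f,n}"] & 0 \\
0 \arrow[r] & Q^f_{R,n} \arrow[r] & Q_{R,n} \arrow[r] & Q_{R/f,n} \arrow[r] & 0,
\end{tikzcd}
\]
uses Cohen--Macaulayness of $Q_{R,n}$ (\cref{CM-Q}) to obtain $H^{d-1}_\m(Q_{R,n})=0$ and hence injectivity of $H^{d-1}_\m(Q_{R/f,n})\to H^d_\m(Q^f_{R,n})$, and then concludes by a socle argument, implicitly invoking the standard fact that over a Gorenstein local ring a map $\overline{R}\to M$ splits iff it is injective on $H^d_\m$. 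You stay on the $\Hom$ side throughout: applying $\Hom_{\overline{R}}(-,\overline{R})$ only to the bottom row, the same Cohen--Macaulay input appears in the dual guise $\Ext^1_{\overline{R}}(Q_{R,n},\overline{R})=0$, and your explicit identification of the connecting map (which is indeed $\psi(\phi)(\bar\alpha)=\overline{\phi(f\cdot\alpha)}=\overline{\phi([f^p]\alpha)}$, by the usual pushout/pullback description of extensions) lets you compare the evaluation maps directly. Your route avoids local cohomology and the Matlis-type step, at the price of the short chase for $\psi$; the two arguments have the same content under local duality over the Gorenstein ring $\overline{R}$. One minor remark: \cref{CM-Q} already gives maximal Cohen--Macaulayness of $Q_{R,n}$ over $\overline{R}$ directly, so the additional ``induction along \cref{exact-Q}'' and ``full support'' clauses in your write-up are not needed.
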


\begin{proof}
We obtain the following diagram in which each horizontal sequence is exact:
\[
\begin{tikzcd}
    0 \arrow[r] & \var{R} \arrow[r,"\cdot f"] \arrow[d,"\Phi^f_{R,n}"] & \var{R} \arrow[r] \arrow[d,"\Phi_{R,n}"] & \var{R/f} \arrow[r] \arrow[d,"\Phi_{R/f,n}"] & 0 \\
    0 \arrow[r] & Q^f_{R,n} \arrow[r] & Q_{R,n} \arrow[r] & Q_{R/f,n} \arrow[r] & 0,
\end{tikzcd}
\]
where $\var{R/f}:=R/(f,p)$.
Taking local cohomology we obtain the following diagram in which each horizontal sequence is exact:
\[
\begin{tikzcd}
    0 \arrow[r] & H^{d-1}_\m(\var{R/f}) \arrow[r] \arrow[d,"\Phi_{R/f,n}"] & H^d_\m(\var{R}) \arrow[r,"\cdot f"] \arrow[d,"\Phi^f_{R,n}"] & H^d_\m(\var{R}) \arrow[r] \arrow[d,"\Phi_{R,n}"] & 0 \\
    0 \arrow[r] & H^{d-1}_\m(Q_{R/f,n}) \arrow[r,"(\star)"] & H^d_\m(Q^f_{R,n}) \arrow[r] & H^d_\m(Q_{R,n}) \arrow[r] & 0,
\end{tikzcd}
\]
where $d:=\dim \var{R}$ and the injectivity of $(\star)$ follows from \cref{CM-Q}.
Since the socle of $H^d_\m(\var{R})$ is contained in the image of the map $H^{d-1}_\m(\var{R/f}) \to H^d_\m(\var{R})$, we obtain that $\Phi_{R/f,n}$ is injective if and only if so is $\Phi^f_{R,n}$, as desired. 
\end{proof}

\subsection{Quasi-$F$-splitting for perfectoids}
In this subsection, we study the purity of $\Phi_{R,n}$ for a perfectoid $R$.
\cref{thm:purity-perfd} is related to the computation of perfectoid pure threshold (cf.~\cref{equiv-qfs-p-pure}).

\begin{theorem}\label{thm:purity-perfd}\textup{(cf.~\cite{BMS18}*{Lemma~3.12}}
Let $R$ be a $p$-torsion free perfectoid and $n$ a positive integer.
We assume $R$ has a compatible system of $p$-power roots $\{p^{1/p^e}\}$ of $p$.
Then there exist an $\var{R}$-module homomorphism $\var{\psi}_n \colon Q_{R,n} \to R/(p^{i_n})$ and a unit $w \in \var{R}$ such that $\var{\psi}_n(1)=1$ and $\var{\psi}_n(V^{n-1}[a])=wp^{i_{n-1}}F^{-n}(a)$ for every $a \in R$, where 
\[
i_m:=\frac{1}{p}+\cdots+\frac{1}{p^m}
\]
for every integer $m \geq 1$ and $i_0:=0$.
\end{theorem}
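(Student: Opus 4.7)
Since $R$ is a $p$-torsion-free perfectoid containing $\{p^{1/p^e}\}$, Frobenius induces ring isomorphisms $F^{-n} \colon \var R \xrightarrow{\sim} R/p^{1/p^n}$ for all $n \geq 1$; and because $i_n \leq 1$ one has $pR \subseteq p^{i_n}R$, so $R/p^{i_n}$ carries a natural $\var R$-module structure via the factorization $R \to \var R \to R/p^{i_n}$. The plan is to construct $\var\psi_n$ by induction on $n$, guided by the exact sequence
$$0 \to F_* Q_{R,n-1} \xrightarrow{V} Q_{R,n} \xrightarrow{R^{n-1}} F_*\var R \to 0$$
of \cref{exact-Q}. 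The base case $n=1$ is $\var\psi_1 := F^{-1} \colon F_*\var R \to R/p^{1/p}$ with $w=1$.

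For the inductive step, given $\var\psi_{n-1}$, the plan is to write an element of $W_n(R)$ as $\alpha = [a_0] + V\gamma$ with $\gamma = (a_1, \ldots, a_{n-1}) \in W_{n-1}(R)$ (this identity in $W_n(R)$ follows by matching ghost components) and set
$$\var\psi_n(\alpha) := \widetilde{F^{-1}(\bar a_0)} + p^{1/p} F^{-1}\bigl(\var\psi_{n-1}(\gamma)\bigr) \pmod{p^{i_n}},$$
where the second summand lies in $R/p^{(1+i_{n-1})/p} = R/p^{i_n}$ since $F^{-1}$ sends $R/p^{i_{n-1}}$ isomorphically onto $R/p^{i_{n-1}/p}$, and $\widetilde{F^{-1}(\bar a_0)}$ denotes a lift of $F^{-1}(\bar a_0) \in R/p^{1/p}$. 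The inductive computation then yields
$$\var\psi_n(V^{n-1}[a]) = p^{1/p} F^{-1}\bigl(w^{(n-1)} p^{i_{n-2}} F^{-(n-1)}(a)\bigr) = (w^{(n-1)})^{1/p} p^{i_{n-1}} F^{-n}(a),$$
so $w := (w^{(n-1)})^{1/p}$ remains a unit of $\var R$, and $\var\psi_n(1) = 1$ is immediate from taking $\gamma = 0$. The $\var R$-linearity will reduce, via the Witt multiplication identity $[c^p] V^j[a] = V^j[c^{p^{j+1}}a]$ of \cref{rule-witt} together with $[c^p]V\gamma = V([c^{p^2}]\gamma)$, to the inductive hypothesis for $\var\psi_{n-1}$.

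The main obstacle will be to choose the lifts $\widetilde{F^{-1}(\bar a_0)}$ coherently so that $\var\psi_n$ is additive on $W_n(R)$; once additivity is established, the descent to $Q_{R,n} = W_n(R)/pW_n(R)$ is automatic because $p R \subseteq p^{i_n}R$ forces $\psi_n$ to vanish on $pW_n(R)$. For $[a]+[b] = (a+b, s_1, \ldots, s_{n-1})$ with $s_k$ the standard Witt polynomials, additivity amounts to the congruence
$$\widetilde{F^{-1}(\overline{a+b})} + p^{1/p} F^{-1}\bigl(\var\psi_{n-1}(s_1, \ldots, s_{n-1})\bigr) \equiv \widetilde{F^{-1}(\bar a)} + \widetilde{F^{-1}(\bar b)} \pmod{p^{i_n}},$$
which balances the failure of the $p$-th root lift to be additive (controlled modulo $p^{1/p}$ by the perfectoid identity $(a+b)^{1/p} \equiv a^{1/p} + b^{1/p} \pmod{p^{1/p}}$) against the cross-term contribution of $s_1 = (a^p + b^p - (a+b)^p)/p$ processed by $\var\psi_{n-1}$ and shifted by an extra $F^{-1}$, with higher Witt components absorbed by induction.
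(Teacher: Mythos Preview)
Your proposal has the right inductive shape but leaves the central difficulty unresolved. The formula
\[
\var\psi_n(\alpha) = \widetilde{F^{-1}(\bar a_0)} + p^{1/p}\,F^{-1}\bigl(\var\psi_{n-1}(\gamma)\bigr)
\]
depends on a choice of lift $\widetilde{F^{-1}(\bar a_0)}$ from $R/p^{1/p}$ to $R/p^{i_n}$, and since $i_n>1/p$ for $n\geq 2$, this lift is ambiguous modulo the nonzero module $p^{1/p}R/p^{i_n}R$. A perfectoid ring does \emph{not} in general admit a canonical $p$-th root of an arbitrary element, so there is no natural set-theoretic section available. You acknowledge this as ``the main obstacle'' but only write down the congruence that would have to hold for additivity of $[a]+[b]$; you give no mechanism for making a coherent choice of lifts, and the phrase ``higher Witt components absorbed by induction'' hides exactly the same well-definedness problem one level down. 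As written, the map is not defined, so neither additivity nor $\var R$-linearity can be checked.

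The paper resolves this by lifting the entire construction to the perfect prism $(A,(d))=(W(R^\flat),([T]-p))$. The point is that $A$ carries a \emph{canonical} Frobenius lift $\phi$, so the decomposition $W_n(A)\simeq A\oplus\phi_*A\oplus\cdots\oplus\phi_*^{n-1}A$ of \cref{decomposition-W} via $s_\phi$ and $\Delta_{W_n}$ is available; this is exactly the ``coherent choice of lifts'' you are seeking, but it exists on $A$, not on $R$. The induction is then run for maps $\psi_n\colon Q_{A,n}\to R^\flat$, and the crucial step---your ``balancing''---becomes the concrete cancellation showing $\psi_n(W_n((d))\cdot\var W_n(A))\subseteq(T^{i_n})$, engineered by normalizing $\psi_{n-1}(\Delta_{W_n}(d))=-1$ so that the two contributions to $\psi_n([a_0 d])$ annihilate. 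Only after this is established does one reduce modulo $d$ to obtain $\var\psi_n\colon Q_{R,n}\to R^\flat/(T^{i_n})\simeq R/(p^{i_n})$. In short, the missing idea in your argument is to pass to $W(R^\flat)$ to gain access to a Frobenius lift and the $\Delta$-maps; without that lift the inductive formula cannot be made well-defined.
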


\begin{proof}
Let $R^\flat$ be a tilt of $R$ and $T \in R^\flat$ the element corresponding to a system $\{p^{1/p^e}\}$.
Let $A:=W(R^\flat)$ and $d:=[T]-p$, then $(A,(d))$ is a perfect prism with $A/(d) \simeq R$ by \cite{BS}*{Corollary~2.31}.
The lift of Frobenius on $A$ is denoted by $\phi$.
We prove the following claim:
\begin{claim}\label{cl:to-unit}
There exists an $A$-module homomorphism $\psi_n \colon Q_{A,n} \to R^\flat$ such that
\begin{enumerate}
    \item[(a)] $\psi_n(1)$ and $\psi_n(\Delta_{W_{n+1}}(d))$ are units, 
    \item[(b)] $\psi_n(W_n((d))\cdot \var{W}_n(A)) \subseteq (T^{i_n})$, and
    \item[(c)] there exists a unit $w \in R^\flat$, $\psi_n(V^{n-1}[a])=wT^{i_{n-1}}F^{-n}(a)$ for every $a \in A$.
\end{enumerate}
\end{claim}
\begin{claimproof}
We prove the assertions by induction on $n$.
For $n=1$, we define $\psi_1$ by
\[
\psi_1 \colon Q_{A,1} \simeq F_*R^\flat \xrightarrow{F^{-1}} R^\flat, 
\]
then $\psi_1(1)=1$ and 
\[
\psi_1(\Delta_{W_2}(d))=\psi_1(\delta_1(d))=F^{-1}(\delta_1(d)).
\]
Since $\delta_1(d)$ is a unit, the map $\psi_1$ satisfies  condition (a).
On the other hand, we have 
\[
\psi_1(W_1((d))\cdot \var{W}_1(A))=F^{-1}(TR^\flat)=T^{1/p}R^\flat,
\]
thus $\psi_1$ also satisfies  condition (b).
Furthermore, we have $\psi(a)=F^{-1}(a)$, thus $\psi$ satisfies condition (c).
Next, we assume $n \geq 2$.
By the induction hypothesis, there exists $\psi_{n-1} \colon Q_{A,n-1} \to R^\flat$ such that $\psi_{n-1}$ satisfies conditions (a), (b), (c).
We replace $\psi_{n-1}$ by $(-\psi_{n-1}(\Delta_{W_{n}}(d))^{-1}) \cdot \psi_{n-1}$, we may assume that $\psi_{n-1}(\Delta_{W_{n}}(d))=-1$.
We set
\[
\psi':=T^{1/p} \cdot F^{-1} \circ \psi_{n-1} \colon F_*Q_{A,n-1} \to R^\flat.
\]
By the proof of \cref{decomposition-W}, we obtain an isomorphism
\[
Q_{A,n} \xrightarrow{\sim} F_*Q_{A,n-1} \oplus F_*R^\flat\ ;\ \alpha \mapsto (\Delta(\alpha),R^{n-1}(\alpha)),
\]
and we define an $A$-module homomorphism $\psi_n$ by
\[
\psi_n \colon \phi_*\var{W}_n(A) \to \phi^2_*W_{n-1}(A) \oplus F_*R^\flat \xrightarrow{(\psi',F^{-1})} R^\flat,
\]
that is, for $\alpha=(a_0,\ldots,a_{n-1}) \in \phi_*\var{W}_n(A)$, 
\[
\psi_n(\alpha)=\psi'(\Delta_{W_{n}}(\alpha))+F^{-1}(a_0).
\]
We take $a_0,\ldots,a_{n-1} \in A$, we have
\begin{align*}
    &\psi_n((a_0d,\ldots,a_{n-1}d)) \\
    =& \psi'(\Delta_{W_{n}}([a_0d])+(a_1d,\ldots,a_{n-1}d))+F^{-1}(a_0d) \\
    =& F^{-1}(a_0)(\psi'(\Delta_{W_{n}}(d))+T^{1/p}) +\psi'([d^p]\Delta_{W_n}([a_0]) +(a_1d,\ldots,a_{n-1}d)) \\
    \overset{(\star_1)}{=}&T^{1/p}\psi'(\Delta_{W_n}([a_0]))+\psi'((a_1d,\ldots,a_{n-1}d)) \overset{(\star_2)}{\in} (T^{i_n}),
\end{align*}
where $(\star_1)$ follows from
\[
\psi'(\Delta_{W_n}(d))+T^{1/p})=T^{1/p}(F^{-1}\circ \psi_{n-1}(\Delta_{W_n}(d))+1)=0
\]
and $(\star_2)$ follows from the facts
\[
T^{1/p}\psi'(\Delta_{W_n}([a_0])) \in (T^{2/p}) \subseteq (T^{i_n})
\]
and 
\begin{align*}
    \psi'((a_1d,\ldots,a_{n-1}d)) &=T^{1/p}F^{-1} \circ \psi_{n-1}((a_1d,\ldots,a_{n-1}d)) \\
    &\subseteq T^{1/p} \cdot F^{-1}((T^{i_{n-1}})) =(T^{i_n}).
\end{align*}
Thus, the homomorphism $\psi_n$ satisfies condition (b).
Furthermore, we have $\psi_n(1)=F^{-1}(1)=1$ and 
\begin{align*}
    \psi_n(\Delta_{W_{n+1}}(d)) &= \psi'(\Delta_{W_n} \circ \Delta_{W_{n+1}}(d))+F^{-1}(\delta_1(d)) \\
    &\equiv F^{-1}(\delta_1(d)) \mod T^{1/p}.
\end{align*}
Since $R^\flat$ is $T^{1/p}$-complete and $F^{-1}(\delta_1(d))$ is a unit, the element $\psi_n(\Delta_{W_{n+1}}(d))$ is a unit.
Furthermore, we have
\[
\psi(V^{n-1}[a])=\psi'(V^{n-2}[a])=T^{1/p}\cdot F^{-1}(wT^{i_{n-2}}F^{-(n-1)}(a))=F^{-1}(w)T^{i_{n-1}}F^{-n}(a),
\]
thus, the map $\psi_n$ satisfies  conditions (a), (b), (c).
\end{claimproof} \\
We take $\psi_n$ as in \cref{cl:to-unit}.
Replacing $\psi_n$ by $(\psi_n(1))^{-1} \cdot \psi_n$, we may assume $\psi_n(1)=1$.
By  condition (b), $\psi_n$ induces the map 
\[
\var{\psi}_n \colon Q_{R,n} \to R^\flat/(T^{i_n}) \simeq R/(p^{i_n})
\]
with $\var{\psi}_n(1)=1$.
Furthermore, by  condition (c), there exists a unit $w \in \var{R}$ such that $\var{\psi}(V^{n-1}[a])=wT^{i_n}F^{-n}(a)$ for every $a \in A$.
\end{proof}

\begin{remark}
By \cite{BMS18}*{Lemma 3.12}, the homomorphism $\theta_n$ induces the isomorphism
\[
\var{\theta_n} \colon R^\flat/((p^\flat)^{1+1/p+\cdots+1/p^{n-1}}) \xrightarrow{\sim} \var{W}_n(R).
\]
Consider the composition
\[
Q_{R,n} \xrightarrow{\var{\theta_n}^{-1}} F_*R^\flat/((p^\flat)^{1+1/p+\cdots+1/p^{n-1}}) \xrightarrow{F^{-1}} R^\flat/((p^\flat)^{(1/p+\cdots+1/p^{n})}) \simeq R/(p^{i_n}),
\]
then we get a splitting of $\Phi_{R,n}$.
\end{remark}

\subsection{Fedder-type criterion for quasi-$F$-splitting}
In this subsection, we introduce Fedder-type criterion (\cref{Fedder}) for quasi-$F$-splitting and quasi-$(F,F^\infty)$-splitting.
The proof of \cref{Fedder} is almost identical to a proof of \cite{TWY}*{Theorem~A}, however, for the convenience of the reader, we provide a proof in Appendix \ref{proof-fedder}.
Furthermore, we introduce examples of computations of quasi-$F$-splitting height and quasi-$(F,F^\infty)$-splitting by using \cref{Fedder}.

\begin{theorem}\label{Fedder}\textup{(cf.~\cite{kty}*{Theorem~A})}
Let $(A,\m)$ be a regular local ring with lift of Frobenius $\phi \colon A \to A$ such that $\phi$ is finite.
We assume $p \in \m \backslash \m^2$ and set $\var{A}:=A/pA$, then $\var{A}$ is regular.
We fix a generator $u \in \Hom_{\var{A}}(F_*\var{A},\var{A})$ as $F_*\var{A}$-module.
Let $f_1,\ldots,f_r$ be a regular sequence in $A$ and we set $I:=(f_1,\ldots,f_r)\var{A}$, $f:=f_1 \cdots f_r$.
We assume $R:=A/(f_1,\ldots,f_r)$ is $p$-torsion free.
\begin{enumerate}
    \item We define a sequence $\{I_n\}$ of ideals of $\var{A}$ by $I_1:=f^{p-1}\var{A}+I^{[p]}$ and
    \[
    I_n:=u\left(F_*(\Delta_1(f^{p-1})I_{n-1})\right)+I_1
    \]
     for $n \geq 2$, inductively.
    Then we have
    \[
    \sht(R)=\inf\{n \geq 1 \mid I_n \nsubseteq \m^{[p]}\},
    \]
    where if $I_n \subseteq \m^{[p]}$ for every positive integer $n$, then the right hand side is defined by $\infty$.
    \item We set $I':=\bigcap_{e \geq 1} u^e\left(F^e_*(f^{p^e-1}\var{A})\right)$.
    We define a sequence $\{I'_n\}$ of ideals of $\var{A}$ by $I'_1:=f^{p-1}I'+I^{[p]}$ and
    \[
    I'_n:=u\left(F_*(\Delta_1(f^{p-1})I'_{n-1})\right)+f^{p-1}\var{A}+I^{[p]}
    \]
     for $n \geq 2$, inductively.
     Then $R$ is quasi-$(F,F^{\infty})$-split if and only if $R$ is quasi-$F$-split and $I'_{\sht(R)} \nsubseteq \m^{[p]}$.
\end{enumerate}    
\end{theorem}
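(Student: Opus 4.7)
The plan is to follow the template of \cite{TWY}*{Theorem~A} (itself modeled on Fedder's classical argument), adapting it to the mixed-characteristic setting via the structural decomposition \cref{decomposition-W} and the $\Delta_s$ formulas of \cref{thm:delta formula}. The core idea: translate the existence of a splitting of $\Phi_{R,n}$ into the existence of an $\var{A}$-linear functional on $Q_{A,n}$ descending to $Q_{R,n}$ and sending $\Phi_{R,n}(1)$ to $1$, then read off when this functional can be unit-valued via Fedder's classical trick.

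First, I would exploit the decomposition of \cref{decomposition-W}: viewing $Q_{A,n}$ as $\var{A}$-module via $\Phi_{A,n}=\overline{s_\phi\circ\phi}$ (using \cref{s_phi-circ-phi}), we get
\[
Q_{A,n}\;\simeq\;F_*\var{A}\oplus F^2_*\var{A}\oplus\cdots\oplus F^n_*\var{A}
\]
as $\var{A}$-modules. Consequently, $\Hom_{\var{A}}(Q_{A,n},\var{A})\simeq \bigoplus_{i=1}^{n}F^i_*\var{A}\cdot u^i$, and the splittings of $\Phi_{R,n}$ are in bijection with tuples $(g_1,\ldots,g_n)$, $g_i\in F^i_*\var{A}$, such that the associated functional kills the submodule coming from $W_n(I)$ and evaluates to $1$ at $\Phi_{R,n}(1)$. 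The kernel condition is handled by the standard Fedder identity $\Hom_{\var{R}}(F^i_*\var{R},\var{R})\cdot 1 = \bigl((f^{p^i-1}\var{A}+I^{[p^i]})/I^{[p^i]}\bigr)\cdot u^i$, reducing descent along $\var{A}\to\var{R}$ to membership in $f^{p-1}\var{A}+I^{[p]}$ on each graded piece (after using the $u$-trace to collapse indices).

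Next, using \cref{thm:delta formula}, I would expand $\Phi_{R,n}(1)=[1]=s_\phi(1)+\sum V^s s_\phi(\Delta_s(1))+V^n\Delta_n(1)$ in the above decomposition; the point is that applying the functional to $\Phi_{R,n}(1)$, modulo the relations from $W_n(I)$, produces an expression whose ``leading'' term at level $n$ involves $\Delta_1(f^{p-1})$ times the level-$(n-1)$ data. This, combined with the recursion $\Delta_n(a)\equiv a^{p^n-p}\Delta_1(a)\pmod{p}$ (with the extra $a^{p^n-2p}\Delta_1(a)^p$ term for $p=2$, which lies in $(f^{p-1}\var{A})$ and is thus absorbed by $I_1$), yields the recurrence
\[
I_n\;=\;u\bigl(F_*(\Delta_1(f^{p-1})\,I_{n-1})\bigr)+I_1.
\]
The characterization of $\sht(R)$ then follows by the standard principle that a splitting exists iff some element in the associated Fedder ideal $I_n$ is not in $\m^{[p]}$ (equivalently, $u^n$-applied it is a unit of $\var{A}$).

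For part (2), I would apply the same yoga to the pushout defining $Q_{R,n,e}$: a splitting of $\Phi_{R,n,e}$ corresponds to a splitting of $\Phi_{R,n}$ together with a compatible map from $F^{n+e-1}_*\var{R}$ factoring through the pushout. The compatibility forces the level-$n$ coefficient to lie in $u^e(F^e_*(f^{p^e-1}\var{A}))$, and requiring this for every $e\geq 1$ produces the intersection $I'=\bigcap_{e\geq 1}u^e(F^e_*(f^{p^e-1}\var{A}))$. Rerunning the recursion with this constraint on the top coefficient gives the ideals $I'_n$, and quasi-$(F,F^\infty)$-splitting is equivalent to $I'_{\sht(R)}\not\subseteq\m^{[p]}$.

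The main obstacle I anticipate is Step~2: keeping careful bookkeeping between the $\var{A}$-module structure on $Q_{R,n}$ (twisted by $\phi$), the Witt-vector multiplication rules (\cref{rule-witt}), and the Fedder descent along $\var{A}\to\var{R}$, so that the recursion genuinely telescopes into the stated inductive formula. The $p=2$ case of \cref{thm:delta formula} introduces an extra term that must be verified not to create a new Fedder ideal, which is a routine but error-prone check. Since this bookkeeping is essentially already carried out in \cite{TWY}*{Theorem~A} and \cite{kty}*{Theorem~A} in parallel settings, I would refer to Appendix~\ref{proof-fedder} for the detailed execution, as the author does.
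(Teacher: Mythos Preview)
Your outline matches the paper's approach: decompose $Q_{A,n}$ via \cref{decomposition-W}, parametrize $\Hom_{\var{A}}(Q_{A,n},\var{A})$ by tuples $(g_1,\ldots,g_n)$, and derive the recursion through the $\Delta$-calculus. One point needs correction, however. In your step~2 you locate the recursion in the expansion of $\Phi_{R,n}(1)=[1]$, but that expansion is trivial: $\Delta_s(1)=0$ for all $s\geq 1$, so $\varphi_{(g_1,\ldots,g_n)}(1)=u(F_*g_n)$ on the nose, and no $\Delta_1(f^{p-1})$ appears here. The factor $\Delta_1(f^{p-1})$ enters instead through the \emph{kernel} condition: one must check $\varphi([af_j])\in I$ for every $a\in\var{A}$ and $1\leq j\leq r$, and it is \cref{thm:delta formula} applied to $\Delta_s(af_j)$, combined with the inductively obtained constraint $g_s\equiv f^{p^{n+1-s}-p}h_s\bmod I^{[p^{n+1-s}]}$ (this is the content of the paper's \cref{condition-induce}), that forces
\[
g_n-u\bigl(F_*(h_{n-1}\Delta_1(f^{p-1}))\bigr)\in f^{p-1}\var{A}+I^{[p]}=I_1,
\]
with, for $p=2$ and $n\geq 3$, an extra term $u(F_*(f^{p-1}\Delta_1(f^{p-1})u(F_*h_{n-2})))\in I_2$ that is absorbed by the recursion. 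With this correction to where the computation actually happens, your plan is exactly what Appendix~\ref{proof-fedder} executes.
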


\begin{example}\label{ex:fedder}\textup{(cf.~\cref{ex:threshold})}
Let $A:=\mathbb{Z}_{(p)}[[X_1,\ldots,X_N]]$ and the $\Z_{(p)}$-algebra homomorphism $\phi$ is given by $\phi(X_i)=X_i^p$.
If $N=3$, we set $x:=X_1,y:=X_2,z:=X_3$, and if $N=4$ we set $x:=X_1,y:=X_2,z:=X_3,w:=X_4$.
Then $F_*\var{A}$ has a basis $\{F_*(X_1^{i_1} \cdots X_{N}^{i_N}) \mid p-1 \geq i_1,\ldots,i_N \geq 0 \}$.
The corresponding dual basis is denoted by $u_1,\ldots,u_d$ and the corresponding element to $X_1^{p-1} \cdots X_N^{p-1}$ is denoted by $u$, then $u$ is a generator of $\Hom_{\var{A}}(F_*\var{A},\var{A})$ as $F_*\var{A}$-module.
Let $f \in A$ be a nonzero divisor such that $R:=A/f$ is $p$-torsion free.
We use same notations $I_n, I'_n,I'$ as in \cref{Fedder} for every positive integer $n$.
We set $\var{R}:=R/pR$ and
 define $\theta \colon F_*\var{A} \to \var{A}$ by $\theta(F_*a)=u(F_*(a\Delta_1(f)))$.
\begin{enumerate}
    \item Let $p=2, N=3, f=x^3+y^3+z^3$.
    Then $I_1 \subseteq \m^{[p]}$ and $u\left(F_*(f\Delta_1(f)\right) \equiv xyz \mod \m^{[p]}$, thus we have $I_2 \nsubseteq \m^{[p]}$.
    Furthermore, we have $u\left(F_*(f\Delta_1(f)\m)\right) \subseteq \m^{[p]}$, thus we have $I_2' \subseteq \m^{[p]}$, in conclusion, $R$ is not quasi-$(F,F^{\infty})$-split (cf.~\cref{CY-case}).
    \item Let $N=3, f=z^2+x^3+y^5$.
    \begin{itemize}
        \item We consider the case of $p=2$, then we have
    \begin{equation}\label{eq:rdp1}
    \begin{tikzcd}
        zf \arrow[r,"\theta",mapsto] & xy^2z \arrow[r,"\theta",mapsto] & y^3z \arrow[r,"\theta",mapsto] & xyz \notin \m^{[p]},
    \end{tikzcd}
    \end{equation}
    thus we have $\sht(R) \leq 4$.
    Furthermore, since we have $f\Delta_1(f)^3 \in \m^{[p^3]}$, thus we have $I_3 \subseteq \m^{[p]}$, therefore $\sht(R)=4$.
    Moreover, since $I'=(x,y^2,z)$ and the first element $zf$ of (\ref{eq:rdp1}) is contained in $I_1'$, the ring $R$ is quasi-$(F,F^{\infty})$-split.
    \item We consider the case of $p=3$, then we have
    \begin{equation*}
    \begin{tikzcd}
        x^2yf^2 \arrow[r,"\theta",mapsto] & -x^2y^3f \arrow[r,"\theta",mapsto] & x^2y^2f \equiv x^2y^2z^2 \notin \m^{[p]},
    \end{tikzcd}
    \end{equation*}
    $f^2\Delta_1(f^2) \in \m^{[p^2]}$, and $I'=(x,y^3,z)$, thus we have $\sht(R)=3$ and $R$ is quasi-$(F,F^\infty)$-split.
    \item We consider the case of $p=5$, then we have
    \begin{equation*}
    \begin{tikzcd}
        y^4f^4 \arrow[r,"\theta",mapsto] & xy^4f^3 \equiv -2x^4y^4z^4 \notin \m^{[p]},
    \end{tikzcd}
    \end{equation*}
    $f^4 \in \m^{[p]}$, and $I'=(x,y,z)$, thus we have $\sht(R)=2$ and $R$ is quasi-$(F,F^\infty)$-split.
    \end{itemize}
    \item Let $p=2, N=4, f=w^2+xyz(x+y+z)$.
    Then $I_1 \subseteq \m^{[p]}$ and 
    \[
    u(F_*(zwf\Delta_1(f)))=xyzw \notin \m^{[p]},
    \]
    thus we have $\sht(R)=2$.
    We note that $\sht(\var{R})=\infty$ by \cite{kty}*{Example~7.11}.
    Furthermore, the ring $R$ is quasi-$(F,F^{\infty})$-split by $zw \in I'$.
    \item Let $p=2, N=4, f=w^2+xyz(x+y+z)+p(xy+xz+yz)w$.
    Then $\sht(R)=3$.
    Indeed, 
    \begin{eqnarray*}
        I_2&=&(f,zw^2+x^2yz^2+xy^2z^2+xz^2w+yz^2w,xw^2+x^2y^2z+x^2yz^2+x^2yw+x^2zw, \\
        &&xyw^2+xzw^2+yzw^2+w^3+x^2y^2z^2) \subseteq \m^{[p]},
    \end{eqnarray*}
    and
    \[
    \theta(F_*(xw^2+x^2y^2z+x^2yz^2+x^2yw+x^2zw)) =xzw+xyw \notin \m^{[p]},
    \]
    thus we have $\sht(R)=3$.
    On the other hand, we have
    \begin{equation*}
    \begin{tikzcd}
        xwf\Delta_1(f) \arrow[r,"\theta",mapsto] & xw^2+x^2y^2z+x^2yz^2+x^2yw+x^2zw  \arrow[r,"\theta",mapsto] & xzw+xyw 
    \end{tikzcd}
    \end{equation*}
    and $xw \in I'$,
    thus  $R$ is quasi-$(F,F^\infty)$-split.
    \item Let $p=2, N=3, f=z^2+x^2y+xy^n$ for $n \geq 2$.
    First, we note that $z \in I'$.
    By the argument in \cite{kty}*{Section~7.1}, we have $\theta^m\left(F^m_*(zy^{2^m-n}f)\right)=xyz \notin \m^{[p]}$ and $\sht(R)=m+1$, where $m:=\rup{\log_2 n}$, thus $R$ is quasi-$(F,F^\infty)$-split.
\end{enumerate}
\end{example}

\begin{example}\label{ex:non-qfs}
Let $A:=\Z_{(p)}[[x,y,z,x',y',z']]$, $f:=x^3+y^3+z^3$, $f':=x'^3+y'^3+z'^3$, $R:=A/(f,f')$, and $p \equiv 2 \mod 3$.
Then $R$ is not quasi-$F$-split.
Indeed, we have
\[
\Delta_1\left((ff')^{p-1}\right)=f^{p(p-1)}\Delta_1(f'^{p-1})+f'^{p(p-1)}\Delta_1(f^{p-1}) \in \m^{[p^2]},
\]
thus we have $I_n \subseteq \m^{[p]}$ for every integer $n \geq 1$.
\end{example}

\section{Quasi-$F$-splitting versus perfectoid purity and  perfectoid pure threshold}
In this section, we study the relationship between quasi-$F$-splitting and perfectoid purity.
For this purpose, we introduce a procedure (\cref{F-W-program}) to obtain a sequence of elements of local cohomologies.

\begin{notation}\label{notation:compare}
Let $(R,\m)$ be a Noetherian local $\Z_{(p)}$-algebra such that $R$ is $p$-torsion free, $p \in \m$, and $\var{R}:=R/pR$ is $F$-finite.
We set $d:=\dim{\var{R}}$.
\end{notation}

\subsection{Quasi-$F$-splitting versus perfectoid purity}

\begin{proposition}\label{p-pure-to-qfs}
We use the notation introduced in \cref{notation:compare}.
Let $n$ be a positive integer and $i_n=1/p+\cdots+1/p^{n}$.
We assume $R$ is Gorenstein.
Let $\rho \colon R \to R_{\infty}$ be a ring homomorphism to a $p$-torsion free perfectoid $R_\infty$.
If $\sht(R) \geq n+1$, then 
\[
H^{d+1}_\m(R) \to H^{d+1}_\m(R_\infty) \xrightarrow{\cdot p^{1-i_n}} H^{d+1}_\m(R_\infty)
\]
is not injective.
In particular, we have $\ppt(R;\div(p)) \leq 1-i_n$.
\end{proposition}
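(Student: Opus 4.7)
The plan is to use the failure of $n$-quasi-$F$-splitness to produce a non-zero element of $H^{d+1}_\m(R)$ that is annihilated by the given composition, and then deduce the threshold bound via \cref{test-perf-tests}. First, since $\sht(R)\geq n+1$, the map $\Phi_{R,n}\colon\var{R}\to Q_{R,n}$ does not split as $\var{R}$-modules. Because $R$ is Gorenstein and $p$-torsion free, $\var{R}=R/pR$ is Gorenstein of dimension $d$, and $Q_{R,n}$ is Cohen--Macaulay of dimension $d$ by \cref{CM-Q}. By local duality (after passing to the completion, justified by \cref{qfs:comp}), the non-splitting of $\Phi_{R,n}$ is equivalent to the failure of injectivity of $H^d_\m(\Phi_{R,n})\colon H^d_\m(\var{R})\to H^d_\m(Q_{R,n})$. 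Since $H^d_\m(\var{R})$ is the injective hull of the residue field, every non-zero submodule contains its one-dimensional socle, so I pick $\eta$ a socle generator of $H^d_\m(\var{R})$ with $\eta\neq 0$ and $H^d_\m(\Phi_{R,n})(\eta)=0$.

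Next, from the short exact sequence $0\to R\xrightarrow{\cdot p}R\to\var{R}\to 0$ I obtain a connecting homomorphism $\delta\colon H^d_\m(\var{R})\to H^{d+1}_\m(R)$. Since $R$ is Cohen--Macaulay of dimension $d+1$, we have $H^d_\m(R)=0$, so $\delta$ is injective and $\xi:=\delta(\eta)\neq 0$ in $H^{d+1}_\m(R)$. After possibly enlarging $R_\infty$ by a $p$-completely faithfully flat perfectoid extension containing a compatible system $\{p^{1/p^e}\}$ of $p$-power roots of $p$, I apply \cref{thm:purity-perfd} to obtain a $\var{R_\infty}$-linear map $\var{\psi}_n\colon Q_{R_\infty,n}\to R_\infty/p^{i_n}$ with $\var{\psi}_n(1)=1$. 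The composition $\var{R}\xrightarrow{\Phi_{R,n}} Q_{R,n}\to Q_{R_\infty,n}\xrightarrow{\var{\psi}_n} R_\infty/p^{i_n}$ is $\var{R}$-linear and sends $1\mapsto 1$, so it coincides with the natural quotient map $\var{R}\to R_\infty/p^{i_n}$.

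I then consider the commutative diagram of short exact sequences
\[
\begin{tikzcd}
0\arrow[r]&R\arrow[r,"\cdot p"]\arrow[d,"{\cdot p^{1-i_n}\rho}"']&R\arrow[r]\arrow[d,"\rho"]&\var{R}\arrow[r]\arrow[d]&0\\
0\arrow[r]&R_\infty\arrow[r,"\cdot p^{i_n}"']&R_\infty\arrow[r]&R_\infty/p^{i_n}\arrow[r]&0,
\end{tikzcd}
\]
whose left square commutes since $p=p^{i_n}\cdot p^{1-i_n}$ and whose right square commutes by the identification above. Applying $H^d_\m$ and using naturality of connecting homomorphisms, the image of $\eta$ in $H^d_\m(R_\infty/p^{i_n})$ is zero (it factors through $H^d_\m(Q_{R,n})$), so $p^{1-i_n}\rho_*(\delta(\eta))=0$ in $H^{d+1}_\m(R_\infty)$. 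Therefore $\xi=\delta(\eta)\neq 0$ lies in the kernel of the composition, proving non-injectivity.

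For the ``in particular'' clause, non-injectivity on top local cohomology forces the $R$-linear map $r\mapsto p^{1-i_n}\rho(r)$ to fail to be pure, since purity yields injectivity after tensoring with any $R$-module and $H^{d+1}_\m(R)$ is a filtered colimit of finite-length quotients $R/(x_1^t,\dots,x_{d+1}^t)$. Combined with \cref{test-perf-tests}, this yields $\ppt(R;\div(p))\leq 1-i_n$. I expect the main obstacle to be the identification of the composition through $\var{\psi}_n$ with the natural quotient, which rests on the normalization $\var{\psi}_n(1)=1$ together with $\var{R}$-linearity of the full composition, alongside the verification of commutativity for the diagram of short exact sequences and the naturality of the boundary maps.
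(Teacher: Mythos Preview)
Your argument is correct and follows the same route as the paper: kill the socle class $\eta\in H^d_\m(\var R)$ in $H^d_\m(Q_{R,n})$ using $\sht(R)\ge n+1$, push it through $\var\psi_n$ from \cref{thm:purity-perfd} so that $\eta$ dies in $H^d_\m(R_\infty/p^{i_n})$, and then compare connecting maps for the map of short exact sequences
\[
(0\to R\xrightarrow{\cdot p}R\to\var R\to 0)\longrightarrow(0\to R_\infty\xrightarrow{\cdot p^{i_n}}R_\infty\to R_\infty/p^{i_n}\to 0)
\]
with left vertical $\cdot p^{1-i_n}\circ\rho$ to conclude that the nonzero lift $\delta(\eta)\in H^{d+1}_\m(R)$ is killed by the composite; the ``in particular'' then follows by specializing to a $p$-torsion free test perfectoid via \cref{thm:const-test-perfd} and \cref{test-perf-tests}, exactly as in the paper.

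One point to tighten: the step where you enlarge $R_\infty$ by a $p$-completely faithfully flat perfectoid extension to acquire a compatible system $\{p^{1/p^e}\}$ is not innocuous for the first assertion. A $p$-completely faithfully flat map of perfectoids need not induce an injection on $H^{d+1}_\m$, so you cannot descend the vanishing of $p^{1-i_n}\cdot\rho_*(\delta(\eta))$ from the extension back to $H^{d+1}_\m(R_\infty)$. The paper simply applies \cref{thm:purity-perfd} to the given $R_\infty$; the very formulation of the map $\cdot p^{1-i_n}$ already presupposes a choice of $p$-power roots of $p$ in $R_\infty$, so the compatible-system hypothesis of \cref{thm:purity-perfd} should be read as implicit in the statement rather than obtained by enlargement.
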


\begin{proof}
We consider the following commutative diagram
\[
\begin{tikzcd}
    H^d_\m(\var{R}) \arrow[r] \arrow[d,"H^d_\m(\rho)"] & H^d_\m(Q_{R,n})  \arrow[d] & \\
    H^d_\m(\var{R}_\infty)  \arrow[r] & H^d_\m(Q_{R_\infty,n})  \arrow[r,"H^d_\m(\var{\psi}_n)"] & H^d_\m(\var{R}_\infty/(p^{i_n})), 
\end{tikzcd}
\]
where $\var{\psi}_n$ is as in \cref{thm:purity-perfd}.
In particular, taking a non-zero element $\eta \in H^d_\m(\var{R})$ in the socle of $H^d_\m(\var{R})$, since $\eta$ is zero in $H^d_\m(Q_{R,n})$ by $\sht(R) \geq n+1$, we have 
\[
H^d_\m(\var{R}) \to H^d_\m(\var{R}_\infty)  \rightarrow H^d_\m(\var{R}_\infty/(p^{i_n})) \ ;\ \eta \mapsto \rho(\eta) \mapsto 0.
\]
By the commutative diagram in which each horizontal sequence is exact;
\[
\begin{tikzcd}
    0 \arrow[r] & R_\infty \arrow[r,"\cdot p"] \arrow[d,"\cdot p^{1-i_n}"] & R_{\infty} \arrow[r] \arrow[d,equal] & \var{R}_\infty \arrow[r] \arrow[d] & 0 \\
    0 \arrow[r] & R_\infty \arrow[r,"\cdot p^{i_n}"] & R_\infty \arrow[r] & \var{R}_\infty/(p^{i_n}) \arrow[r] & 0,
\end{tikzcd}
\]
we obtain the commutative diagram
\[
\begin{tikzcd}
    H^d_\m(\var{R}_\infty)  \arrow[r] \arrow[d] & H^{d+1}_\m(R_\infty)   \arrow[d,"\cdot p^{1-i_n}"] \\
    H^d_\m(\var{R}_\infty/(p^{i_n}))  \arrow[r] & H^{d+1}_\m(R_\infty).
\end{tikzcd}
\]
Therefore, by the homomorphism
\begin{equation}\label{eq:map-gor}
    H^d_\m(\var{R}) \to H^{d+1}_\m(R) \xrightarrow{H^{d+1}_\m(\rho)} H^{d+1}_\m(R_\infty) \xrightarrow{\cdot p^{1-i_n}} H^{d+1}_\m(R_\infty) 
\end{equation}
the element $\eta$ maps to $0$.
Since $R$ is $p$-torsion free and Gorenstein, the first homomorphism in (\ref{eq:map-gor}) is injective, thus we obtain that the map
\[
H^{d+1}_\m(R) \to H^{d+1}_\m(R_\infty) \xrightarrow{\cdot p^{1-i_n}} H^{d+1}_\m(R_\infty)
\]
is not injective, as desired.
The last assertion follows from the existence of $p$-torsion free test perfectoids (\cref{thm:const-test-perfd}) and \cref{test-perf-tests}.
\end{proof}

\begin{corollary}\label{non-qfs-to-non-bcm}
We use the notation introduced in \cref{notation:compare}.
We assume $R$ is Gorenstein.
If $R$ is not quasi-$F$-split and $p=2$, then $R$ is not perfectoid BCM-regular.
\end{corollary}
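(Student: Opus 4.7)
The plan is to exhibit a perfectoid big Cohen--Macaulay $R$-algebra over which $R$ fails to be pure, namely $B := \widehat{R^+}$, where $R^+$ denotes the absolute integral closure of $R$.

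By \cref{qfs:comp} I may assume $R$ is complete. Since $R$ is not quasi-$F$-split it is not $1$-quasi-$F$-split, so by \cref{def:Q} the Frobenius $\var R \to F_*\var R$ does not split, i.e.\ $\var R$ is not $F$-pure. Because $\var R$ is $F$-finite, this forces $\var R$ not to be a splinter, so the extension $\var R \to \var{R^+} := R^+/pR^+$ (a big Cohen--Macaulay $\var R$-algebra by Hochster--Huneke in characteristic $p$) fails to be pure. Since $\var R$ is Gorenstein (inherited from $R$ being Gorenstein and $p$-torsion free), its one-dimensional socle generator $\eta \in H^d_\m(\var R)$ therefore maps to zero in $H^d_\m(\var{R^+})$.

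By André's theorem, $R^+$ is a BCM $R$-algebra in mixed characteristic, and its $p$-adic completion $B := \widehat{R^+}$ is a perfectoid BCM $R$-algebra: $B$ is $p$-torsion free with $B/pB \simeq \var{R^+}$ perfect, and $B$ contains a compatible system $\{p^{1/p^e}\}$ of $p$-power roots of $p$ (since $R^+$ is closed under taking $p$-th roots). The functorial commutative diagram of connecting homomorphisms
\[
\begin{tikzcd}
H^d_\m(\var R) \arrow[r] \arrow[d] & H^{d+1}_\m(R) \arrow[d, "\rho"] \\
H^d_\m(\var{R^+}) \arrow[r] & H^{d+1}_\m(B)
\end{tikzcd}
\]
arising from $0 \to R \xrightarrow{\cdot p} R \to \var R \to 0$ and $0 \to B \xrightarrow{\cdot p} B \to \var{R^+} \to 0$ then forces $\rho(\eta_R) = 0$ in $H^{d+1}_\m(B)$, where $\eta_R \in H^{d+1}_\m(R)$ is the image of $\eta$ under the top connecting map and is non-zero because $H^d_\m(R) = 0$ makes that map injective.

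Since $\eta_R$ is non-zero but $\rho(\eta_R) = 0$, the extension $R \to B$ fails to be pure, and as $B$ is a perfectoid BCM $R$-algebra, $R$ is not perfectoid BCM-regular. The main obstacle lies in the first paragraph: propagating the non-$F$-purity of $\var R$ through the splinter property to the non-purity of $\var R \to \var{R^+}$ and hence to the vanishing of the socle class $\eta$ in $H^d_\m(\var{R^+})$, together with the standard verification that $\widehat{R^+}$ is a perfectoid BCM $R$-algebra.
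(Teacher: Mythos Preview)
Your argument has a genuine gap: the ring $\overline{R^+}=R^+/pR^+$ is \emph{not} perfect. Frobenius on $R^+/pR^+$ is surjective (since $R^+$ contains $p$-th roots) but not injective; for instance the image of $p^{1/p}$ is a nonzero element whose $p$-th power is $p\equiv 0$. Hence the implication ``$F(\eta)=0$ in $H^d_\m(\var R)$, so the image of $\eta$ vanishes in $H^d_\m(\overline{R^+})$'' fails, and so does the chain ``$\var R$ not $F$-pure $\Rightarrow$ not a splinter $\Rightarrow$ $\var R\to\overline{R^+}$ not pure'': the splinter/tight-closure statements concern $(\var R)^+$ in characteristic $p$, which is a different ring from $R^+/pR^+$.

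There is a structural red flag confirming that the approach cannot succeed: your argument uses neither the hypothesis $p=2$ nor the full strength of ``not quasi-$F$-split'' (only $\sht(R)\ge 2$, i.e.\ $\var R$ not $F$-pure). If valid it would show that $\var R$ not $F$-pure forces $R$ not perfectoid BCM-regular for every $p$, contradicting \cref{example-new}, where $R$ is perfectoid BCM-regular while $\var R$ (an $E_8$ singularity in characteristic $2$, $3$, or $5$) is not even $F$-pure. The paper's proof instead combines \cref{p-pure-to-qfs}, which for $\sht(R)=\infty$ gives $\ppt(R;\div(p))\le 1-i_n$ for all $n$, with the fact that BCM-regularity forces $(R,\varepsilon\,\div(p))$ to be BCM-regular for some $\varepsilon>0$; only for $p=2$ does $1-i_n\to 0$, producing the contradiction.
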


\begin{proof}
If $R$ is perfectoid BCM-regular, then there exists $\varepsilon \in \Z[\frac{1}{p}]_{>0}$ such that $(R,\varepsilon \div(p))$ is perfectoid BCM-regular by \cite{bcm-reg}*{Proposition~6.10}.
Thus, there exists a perfectoid big Cohen-Macaulay algebra $B$ over $R$ such that $R \to B \xrightarrow{\cdot p^\varepsilon} B$ is pure.
Since $B$ is $p$-torsion free, it contradicts the assumption by \cref{p-pure-to-qfs}.
\end{proof}

\begin{definition}\label{F-W-program}
We use the notation introduced in \cref{notation:compare} and assume that $R$ is quasi-$F$-split.
We take $\eta \in H^d_\m(\var{R}) \backslash \{0\}$ and construct sequences $\{\eta_i \in H^d_\m(\var{R}) \backslash \{0\} \}_{i \geq 0}$ and $\{n_i \in \Z_{\geq 1}\}_{i \geq 0}$ using the following procedure.
We start with $\eta_0:=\eta$.
Suppose that $\eta_i \in H^d_\m(\var{R})\backslash \{0\}$ has been defined for some $i \geq 0$.
We take the minimal integer $n_i \geq 1$ such that $\Phi_{R,n_i}(\eta_i) \neq 0$.
We note that such an integer $n_i$ exists since $R$ is quasi-$F$-split and $n_i \leq \sht(R)$.
By the exact sequence in \cref{exact-Q}, there exists $\eta' \in H^d_\m(F^n_*\var{R})$ such that $V^{n_i-1}(\eta')=\Phi_{R,n_i}(\eta_i)$.
We choose such an $\eta'$ and set $\eta_{i+1}:=\eta'$.
\end{definition}

\begin{theorem}\label{thm:qfs-to-p-pure}
Let $(A,\m)$ be a local Gorenstein $\Z_{(p)}$-algebra such that $A$ is $p$-torsion free and $p \in \m$.
Let $f \in \m$ be an element of $A$ such that $f,p$ is a regular sequence of $A$.
Let $A_\infty$ be a perfectoid over $A$ such that $A_\infty$ is a cohomologically Cohen-Macaulay, $A \to A_\infty$ is pure, $A_\infty$ has a system of $p$-power roots system of $f$, and $f,p$ are regular elements of $A_\infty$.
Let $R:=A/(f)$ and $R_\infty:=(A_\infty/(f))_{perfd}$.
If $R$ is quasi-$F$-split, then $R \to R_\infty$ is pure.
\end{theorem}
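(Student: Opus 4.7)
The plan is to reduce purity of $R \to R_\infty$ to showing that a socle generator of $H^{d+1}_\m(R)$ has nonzero image in $H^{d+1}_\m(R_\infty)$, to chase that class to the perfectoid side via quasi-$F$-splitness together with \cref{thm:purity-perfd}, and finally to certify the nonvanishing using the purity of $A \to A_\infty$ combined with the cohomological Cohen-Macaulayness of $A_\infty$.

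Write $d := \dim \var{R}$. Since $A$ is Gorenstein and $f$ is a non-zerodivisor, $R$ is Gorenstein local of dimension $d+1$. After replacing $R$ by its $\m$-adic completion (permitted by \cref{qfs:comp}), a standard Matlis-duality argument reduces purity of $R \to R_\infty$ to injectivity of $H^{d+1}_\m(R) \to H^{d+1}_\m(R_\infty)$, and since the socle of $H^{d+1}_\m(R)$ is one-dimensional and annihilated by $p$, it suffices to show that a socle generator $\eta$ has nonzero image. Using the Bockstein $\partial : H^d_\m(\var{R}) \to H^{d+1}_\m(R)$ coming from $0 \to R \xrightarrow{p} R \to \var{R} \to 0$, we lift $\eta$ to $\tilde\eta \in H^d_\m(\var{R})$ with $\partial(\tilde\eta) = \eta$.

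Set $n := \sht(R)$. Quasi-$F$-splitness provides a splitting of $\Phi_{R,n}$, so $\Phi_{R,n}(\tilde\eta) \neq 0$ in $H^d_\m(Q_{R,n})$. On the perfectoid side, the hypotheses on $A_\infty$ force $R_\infty$ to be $p$-torsion free, perfectoid, to inherit a compatible system $\{p^{1/p^e}\}$ from $A_\infty$, and to satisfy $H^{d-1}_\m(\var{R}_\infty) = 0$ (via cohomological Cohen-Macaulayness of $A_\infty$ and the regular sequence $p, f$). Thus \cref{thm:purity-perfd} produces an $\var{R}_\infty$-linear map $\var{\psi}_n : Q_{R_\infty,n} \to R_\infty/p^{i_n}$ with $\var{\psi}_n(1) = 1$ and $\var{\psi}_n(V^{n-1}[a]) = w p^{i_{n-1}} F^{-n}(a)$ for some unit $w \in \var{R}_\infty$. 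Comparing the Bocksteins $\partial_1, \partial_2$ of $0 \to R_\infty \xrightarrow{p} R_\infty \to \var{R}_\infty \to 0$ and $0 \to R_\infty \xrightarrow{p^{i_n}} R_\infty \to R_\infty/p^{i_n} \to 0$ via the chain map with vertical arrows $p^{1-i_n}, \mathrm{id}, \gamma$ (where $\gamma : \var{R}_\infty \twoheadrightarrow R_\infty/p^{i_n}$ is the quotient) yields $p^{1-i_n}\,\partial_1 = \partial_2 \circ \gamma$; since $\gamma = \var{\psi}_n \circ \Phi_{R_\infty, n}$, the image of $\eta$ in $H^{d+1}_\m(R_\infty)$ equals, up to a unit times $p^{1-i_n}$, the image of $\Phi_{R,n}(\tilde\eta)$ under
\[
H^d_\m(Q_{R,n}) \to H^d_\m(Q_{R_\infty,n}) \xrightarrow{\var{\psi}_n} H^d_\m(R_\infty/p^{i_n}) \xrightarrow{\partial_2} H^{d+1}_\m(R_\infty).
\]

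The main obstacle is to verify that this image is nonzero. Using the exact sequence of \cref{exact-Q}, one writes $\Phi_{R,n}(\tilde\eta) = V^{n-1}(\eta_1)$ for some $\eta_1 \in H^d_\m(F^n_* \var{R})$, and the explicit formula for $\var{\psi}_n \circ V^{n-1}$ then identifies the required image with (a unit multiple of) the $p^{1/p^n}$-Bockstein of $F^{-n}(\eta_1)$ in $H^{d+1}_\m(R_\infty)$. The nonvanishing of this class is the heart of the argument: the identification $(A_\infty/p)_{\mathrm{perfd}} \simeq (A/p)_{\perf}$, combined with cohomological Cohen-Macaulayness of $A_\infty$ and regularity of $p, f$, allows one to check nonvanishing after pushing to $H^d_\m((A/(p,f))_{\perf})$; this survival is in turn forced by the purity hypothesis $A \to A_\infty$ together with Gorensteinness of $A$, which makes the double-Bockstein lift of $\tilde\eta$ to the socle of $H^{\dim A}_\m(A)$ inject into $H^{\dim A}_\m(A_\infty)$, whence a routine chase through the regular sequence $p, f$ transports the injectivity back to the desired nonvanishing in $H^{d+1}_\m(R_\infty)$, completing the proof.
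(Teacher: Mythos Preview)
Your proposal has a genuine gap in the final paragraph. The ``routine chase'' you describe does not go through, for two linked reasons.

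First, you need the $p^{1/p^n}$-Bockstein of $F^{-n}(\eta'_1)$ to be nonzero in $H^{d+1}_\m(R_\infty)$, which amounts to $\eta'_1$ surviving in $H^d_\m(\var{R}_\infty/(p^{1/p^\infty}))$, i.e.\ $F^e(\eta_1)\neq 0$ for every $e$. But nothing in the hypotheses forces this: quasi-$F$-splitness gives $\Phi_{R,n}(\tilde\eta)\neq 0$, not $F$-stability of $\eta_1$. Your argument as written is exactly the proof of \cref{ht-to-p-pure}, which needs the stronger quasi-$(F,F^\infty)$-split hypothesis. Under mere quasi-$F$-splitness $\eta_1$ may fall into case~(ii) of \cref{F-W-program}, and one must iterate to produce $\eta_2,\eta_3,\ldots$.

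Second, and more structurally, the bridge you invoke from purity of $A\to A_\infty$ (injectivity on $H^{\dim A}_\m$) back to $H^{d+1}_\m(R_\infty)$ is not a regular-sequence chase: $R_\infty=(A_\infty/f)_{\mathrm{perfd}}$ is \emph{not} $A_\infty/f$, so there is no short exact sequence over $A_\infty$ producing $H^\ast_\m(R_\infty)$. (Relatedly, the hypothesis gives $p$-power roots of $f$ in $A_\infty$, not of $p$, so your appeal to \cref{thm:purity-perfd} on $R_\infty$ is also unjustified.)

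The paper avoids both problems by never working in $R_\infty$ at all. It passes the socle class $\eta$ to $\tau\in H^{d+1}_\m(\var{A}_\infty)$ via the $f$-Bockstein, then runs the program of \cref{F-W-program} to produce the full sequence $\{\eta_i\}$ (which exists precisely because \cref{ker-perf-pre,i-ii-to-p-pure} rule out early termination), and proves by induction on $e$ that the twisted classes $\tau_i^{(e)}:=\tau_i\cdot f^{1-1/p^e}$ are all nonzero in $H^{d+1}_\m(\var{A}_\infty)$. The inductive step uses a direct Witt-vector computation showing $\Phi_{A_\infty,n}(\tau_i^{(e+n-1)})=V^{n-1}(F^n_*\tau_{i+1}^{(e-1)})$, which trades a higher $f$-power on $\tau_i$ for a lower one on $\tau_{i+1}$; the base $e=0$ is exactly where purity of $A\to A_\infty$ enters. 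Once $\tau_0^{(e)}\neq 0$ for all $e$, purity of $R\to R_\infty$ follows from the argument of \cite{p-pure}*{Proposition~6.5} via purity of $fA\to(f^{1/p^\infty})$. The iteration over $i$ is essential and is what your single-step reduction to $\eta_1$ misses.
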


\begin{proof}
We set $\var{A}:=A/(p)$, $\var{R}:=R/(p)$, $\var{R}_\infty:=R_\infty/(p)$, $\var{A}_\infty:=A_\infty/(p)$, and $d:=\dim \var{R}$.
We take a non-zero element $\eta \in H^d_\m(\var{R})$ in the socle of $H^d_\m(\var{R})$.
We construct a sequence $\{\eta_i\}_{i \geq 0}$ with $\eta_0=\eta$ by the procedure in \cref{F-W-program}.
We fix a system of $p$-power roots $\{f^{1/p^e}\}$ of $f$ in $A_\infty$.
By the exact sequence
\[
0 \to \var{A} \xrightarrow{\cdot f} \var{A} \to \var{R} \to 0,
\]
we obtain the homomorphism $H^d_\m(\var{R}) \to H^{d+1}_\m(\var{A})$.
We set
\[
\tau_i :=(u \colon H^d_\m(\var{R}) \to H^{d+1}_\m(\var{A}) \to H^{d+1}_\m(\var{A}_\infty))(\eta_i)
\]
and $\tau_i^{(e)}:=\tau_i f^{1-1/p^e}$ for all integers $e,i  \geq 0$.
\begin{claim}\label{cl:non-van}
We have $\tau_i^{(e)} \neq 0$ for all integers $e,i \geq 0$.
\end{claim}
\begin{claimproof}
We prove the assertion by induction on $e$.
Since $A$ is Cohen-Macaulay, the homomorphism $H^d_\m(\var{R}) \to H^{d+1}_\m(\var{A})$ is injective.
Since $A \to A_\infty$ is pure, the homomorphism $H^{d+1}_\m(\var{A}) \to H^{d+1}_\m(\var{A}_\infty)$ is injective, and in particular, we have $\tau_i^{(0)}=\tau_i \neq 0$ for every integer $i \geq 0$.
Thus, we assume $e \geq 1$ and take $i \geq 0$.
First, we assume $n_i=1$, then we have $F(\eta_i)=\eta_{i+1}$.
By the commutative diagram
\[
\begin{tikzcd}
    H^d_\m(\var{R}) \arrow[r,"u"] \arrow[d,"F"] & H^{d+1}_\m(\var{A}_\infty) \arrow[d,"\cdot f^{p-1} F"] \\
    H^d_\m(F_*\var{R}) \arrow[r,"F_*u"] & H^{d+1}_\m(F_*\var{A}_\infty),
\end{tikzcd}
\]
we have 
\begin{align*}
    \tau_{i+1}^{(e-1)}
    &=u(\eta_{i+1})f^{1-1/p^{e-1}}=u \circ F(\eta_i) f^{1-1/p^{e-1}} \\
    &=F\left(u(\eta_i)\right)f^{p-1} f^{1-1/p^{e-1}}=F\left(u(\eta_i)f^{1-1/p^{e}}\right) \\
    &=F(\tau_i^{(e)}).
\end{align*}
By the induction hypothesis, we have $\tau_{i+1}^{(e-1)} \neq 0$, thus we have $\tau_i^{(e)} \neq 0$, as desired.
Next, we assume $n_i \geq 2$, then $\Phi_{R,n_i}(\eta_i)=V^{n_i-1}(F^{n_i}_*\eta_{i+1})$.
We set $n:=n_i$ and take regular sequence $x_1,\ldots,x_d$ on $\var{R}$ and elements $a,b \in \var{R}$ such that $\eta_i=[a/x_1\cdots x_d]$, $\eta_{i+1}=[b/x_1^{p^n} \cdots x_d^{p^n}]$, and
\[
[a^p] \equiv V^{n-1}(b) \mod ([x_1],\ldots,[x_d])Q_{R,n}.
\]
Therefore, there exist elements $c_0,\ldots,c_{n-1} \in \var{A}$ such that
\[
[a^p] + (c_0f,\ldots,c_{n-1}f) \equiv V^{n-1}(b) \mod ([x_1],\ldots,[x_d])Q_{A,n}.
\]
Since we have $(c_0d,\ldots,c_{n-1}f)[f^{p-p^{1/p^{e+n}}}] \in [f]Q_{A_{\infty},n}$, we obtain
\begin{align*}
    [af^{1-1/p^{e+n-1}}]^p  
    \equiv V^{n-1}(bf^{p^n-1/p^{e-1}}) \mod ([x_1],\ldots,[x_d],[f])Q_{A_\infty,n}.
\end{align*}
In particular, we have
\begin{align*}
    \Phi_{A_\infty,n}(\tau_i^{(e+n-1)})
    &= \left[\frac{[af^{1-1/p^{e+n-1}}]^p}{x_1\cdots x_d f}\right] \\
    &= \left[\frac{V^{n-1}(F^n_*(bf^{p^n-1/p^{e-1}}))}{x_1\cdots x_d f}\right] =V^{n-1}\left(F^n_*\left[\frac{bf^{p^n-1/p^{e-1}}}{x_1^{p^n} \cdots x_d^{p^n}f^{p^n}} \right]\right) \\
    &= V^{n-1}\left(F^n_*\left[\frac{bf^{1-1/p^{e-1}}}{x_1^{p^n} \cdots x_d^{p^n}f} \right]\right) =V^{n-1}(F^n_*\tau_{i+1}^{(e-1)}),
\end{align*}
which is non-zero by the induction hypothesis and $H^{d}_\m(\var{A}_\infty)=0$.
Therefore, we obtain $\tau_i^{(e)} \neq 0$ by $e+n-1 \geq e$, as desired.
\end{claimproof}\\
By \cref{cl:non-van}, we have $\tau f^{1-p^{1/p^e}}=\tau_0^{(e)} \neq 0$ for every integer $e \geq 0$.
Next, we prove that $R \to R_\infty$ is pure.
By the argument of \cite{p-pure}*{Proposition~6.5}, it is enough to show that $fA \to (f^{1/p^\infty})$ is pure.
Since $(f^{1/p^\infty}) \simeq \mathrm{colim}_e f^{1/p^e}A_\infty$, it is enough to show that $fA \to f^{1/p^e}A_\infty$ is pure for every $e \geq 0$.
We fix an integer $e \geq 0$.
Since $f$ is a regular element in $A$ and $A_\infty$, it is enough to show that the homomorphism $A \to A_\infty \xrightarrow{\cdot f^{1-1/p^e}} A_\infty$ is pure.
We consider the following commutative diagram in which horizontal sequence is exact:
\[
\begin{tikzcd}
    0 \arrow[r] & H^{d+1}_\m(\var{A}) \arrow[r,"\alpha"] \arrow[d,"\cdot f^{1-1/p^e}"] & H^{d+2}_\m(A) \arrow[d,"\cdot f^{1-1/p^e}"] \\
    0 \arrow[r] & H^{d+1}_\m(\var{A}_\infty) \arrow[r] & H^{d+2}_\m(A_\infty).
\end{tikzcd}
\]
Since the image $\tau'$ of $\eta$ by the map $H^{d}_\m(\var{R}) \to H^{d+1}_\m(\var{A})$ is an element of the socle of $H^{d+1}_\m(\var{A})$,  we have
\[
(H^{d+1}_\m(\var{A}) \xrightarrow{\cdot f^{1-1/p^e}}H^{d+1}_\m(\var{A}_\infty))(\tau')=\tau f^{1-1/p^e} \neq 0,
\]
and the image of $\tau'$ by $H^{d+1}_\m(\var{A}) \to H^{d+2}_\m(A)$ is contained in the socle of $H^{d+2}_\m(A)$, we obtain the homomorphism $H^{d+2}_\m(A) \xrightarrow{\cdot f^{1-1/p^e}} H^{d+2}_\m(A_\infty)$ is injective.
Since $A$ is Gorenstein, we obtain that the map $A \xrightarrow{\cdot f^{1-1/p^e}} A_\infty$ is pure, as desired.
\end{proof}

\begin{theorem}\label{thm:qfs-to-p-pure-comp}\textup{(cf.~\cref{intro:ht-to-p-pure})}
Let $(R,\m)$ be a local $\Z_{(p)}$-algebra such that $R$ is $p$-torsion free and $p \in \m$.
We assume $R$ is complete intersection.
If $R$ is quasi-$F$-split, then $R$ is perfectoid pure.
In particular, if $R/pR$ is quasi-$F$-split, then $R$ is perfectoid pure.
\end{theorem}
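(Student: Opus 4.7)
The plan is to reduce to Theorem~\ref{thm:qfs-to-p-pure}, which handles the case of a single hypersurface in a Gorenstein ambient ring. First I would pass to the $\m$-adic completion: by \cref{qfs:comp} quasi-$F$-splitting is preserved under completion, and by \cite{p-pure}*{Lemma~4.8} perfectoid purity for $R$ is equivalent to that of $\widehat R$, so we may assume $R$ is $\m$-adic complete. By Cohen's structure theorem, write $R = A/(f_1, \ldots, f_r)$ where $A$ is a complete regular local $\Z_{(p)}$-algebra (such as $W(k)[[x_1, \ldots, x_n]]$) and $f_1, \ldots, f_r$ is a regular sequence whose intermediate quotients are all $p$-torsion free. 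Set $A_0 := A/(f_1, \ldots, f_{r-1})$, which is local Gorenstein (being complete intersection), $p$-torsion free, and satisfies $R = A_0/(f_r)$ with $f_r, p$ a regular sequence on $A_0$.

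The key step is to construct a perfectoid $A_{0,\infty}$ over $A_0$ satisfying the hypotheses of Theorem~\ref{thm:qfs-to-p-pure}. I would start from the perfectoid $A_\infty$ over $A$ produced by \cite{p-pure}*{Lemma~4.23}, enlarged to contain compatible systems of $p$-power roots of each $f_i$; this $A_\infty$ is cohomologically Cohen-Macaulay over $A$, is pure over $A$, and satisfies $(A_\infty/p)_{perfd} \simeq (A/p)_{perf}$. I would then define $A_{0,\infty} := (A_\infty/(f_1, \ldots, f_{r-1}))_{perfd}$. The compatibility $(A_{0,\infty}/p)_{perfd} \simeq (A_0/p)_{perf}$ and the cohomological Cohen-Macaulayness of $A_{0,\infty}$ should follow because $f_1, \ldots, f_{r-1}, p$ remain a regular sequence on $A_\infty$ (by its Cohen-Macaulayness and the purity of $A\to A_\infty$) and perfectoidization commutes appropriately with reduction modulo a regular sequence.

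Once these hypotheses are verified, Theorem~\ref{thm:qfs-to-p-pure} applied to $(A_0, A_{0,\infty}, f_r)$, together with the quasi-$F$-splitness of $R = A_0/(f_r)$, yields purity of $R \to R_\infty := (A_{0,\infty}/(f_r))_{perfd}$. By \cref{test-perf-perfectoidazation}(2) the ring $R_\infty$ is a test perfectoid over $R$, so \cref{test-perf-tests}(1) upgrades this purity to perfectoid purity of $R$. The final ``in particular'' statement is then immediate from \cref{compsre-posi-case}, which gives $\sht(R)\le\sht(\var R)$, so quasi-$F$-splitness of $\var R$ implies quasi-$F$-splitness of $R$.

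The main obstacle I anticipate is verifying purity of $A_0 \to A_{0,\infty}$: this is needed to invoke Theorem~\ref{thm:qfs-to-p-pure}, yet it cannot be extracted by iterating that theorem since $A_0$ is not assumed quasi-$F$-split (only the final quotient $R$ is). I expect the purity to come from a direct local cohomology argument exploiting Gorensteinness of $A_0$, Cohen-Macaulayness of $A_{0,\infty}$, and the already-known purity of $A\to A_\infty$, along the lines of the final paragraph in the proof of Theorem~\ref{thm:qfs-to-p-pure}, or alternatively from a general lemma asserting that purity of the ambient map descends under quotient by a regular sequence after perfectoidization.
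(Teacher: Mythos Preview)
Your overall strategy --- complete, present $R$ as a quotient of a regular local ring by a regular sequence $f_1,\ldots,f_r$, build a perfectoid tower, and feed the last step into \cref{thm:qfs-to-p-pure} --- is exactly the paper's approach. You have also correctly isolated the one nontrivial obstacle, namely purity of $A_0 \to A_{0,\infty}$ (in the paper's notation, $R_{r-1}\to R_{r-1,\infty}$).

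Where your plan stalls is the claim that ``$A_0$ is not assumed quasi-$F$-split.'' In fact every intermediate quotient $R_i:=R_0/(f_1,\ldots,f_i)$ \emph{is} quasi-$F$-split. This is an easy inversion-of-adjunction: since $\Phi^{f}_{R,n}(a)=[a^pf^p]=f\cdot\Phi_{R,n}(a)$ in $Q_{R,n}$, any splitting of $\Phi^{f}_{R,n}$ precomposed with multiplication by $f$ gives a splitting of $\Phi_{R,n}$, so purely $n$-quasi-$F$-split implies $n$-quasi-$F$-split. Combined with \cref{thm:adj-inv-qfs}, this shows that $R_i$ quasi-$F$-split $\Rightarrow$ $R_{i-1}$ quasi-$F$-split whenever $R_{i-1}$ is Gorenstein and $R_i$ is $p$-torsion free (both hold, since $f_1,\ldots,f_r,p$ is a regular sequence on $R_0$, hence so is any truncation). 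Descending from $R=R_r$ one gets that every $R_i$ is quasi-$F$-split.

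With this in hand, the paper runs the induction you were looking for: assuming $R_{i-1}\to R_{i-1,\infty}$ is pure (base case $i=0$ by faithful flatness), one checks via \cite{p-pure}*{Theorem~4.24} and \cref{reg-seq-comm} that $R_{i-1,\infty}$ is cohomologically Cohen-Macaulay with $f_i,p$ regular on it, and then \cref{thm:qfs-to-p-pure} applied to $(R_{i-1},R_{i-1,\infty},f_i)$ --- using that $R_i$ is quasi-$F$-split --- gives purity of $R_i\to R_{i,\infty}$. The paper's write-up is terse and does not spell out the quasi-$F$-splitness of the intermediate $R_i$, but that is the ingredient your sketch is missing; the hoped-for ``purity descends under quotient by a regular sequence after perfectoidization'' is not available without it.
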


\begin{proof}
By \cref{qfs:comp} and \cite{p-pure}*{Lemma~4.13}, we may assume that $R$ is complete.
Therefore, there exist a regular local ring $R_0$ and regular sequence $f_1,\ldots,f_r$ such that $R_0/(f_1,\ldots,f_r) \simeq R$.
Let $R_i:=R_0/(f_1,\ldots,f_i)$ for $r \geq i \geq 1$.
We take a perfectoid $R_{0,perfd}$ as in \cite{p-pure}*{Section~4.3}.
By Andre's flatness lemma \cite{BS}*{Theorem~7.14}, there exists a $p$-completely faithfully flat extension $R_{0,perfd} \to R_{0,\infty}$ to a perfectoid such that $R_{0,\infty}$ has systems of $p$-power roots of $f_1,\ldots,f_r$.
We set $R_{i,\infty}:=(R_{0,\infty}/(f_1,\ldots,f_i))_{perfd}$.
We prove that $R_i \to R_{i,\infty}$ is pure for each $r \geq i \geq 0$ by induction on $i$.
Since $R_0 \to R_{0,\infty}$ is faithfully flat, we obtain the case of $i=0$.
We take $i \geq 1$ and assume $R_{i-1} \to R_{i-1,\infty}$ is pure.
By \cref{thm:qfs-to-p-pure}, it is enough to show that $R_{i-1,\infty}$ is cohomologically Cohen-Macaulay and $f_i,p$ are regular elements of $R_{i-1,\infty}$.
The cohomologically Cohen-Macaulayness follows from \cite{p-pure}*{Theorem~4.24}.
Furthermore, the sequence $p,f$ is a regular sequence on $R_{i-1,\infty}$ by the proof of \cite{p-pure}*{Theorem~4.24}, and in particular, the element $p$ is a regular element.
By \cref{reg-seq-comm}, the element $f$ is a regular element of $R_{i-1,\infty}$, as desired.
The last assertion follows from the first assertion and \cref{compsre-posi-case}.
\end{proof}

\begin{lemma}\label{reg-seq-comm}
Let $(R,\m)$ be a Noetherian local ring and $M$ an $R$-module.
For elements $f,g \in \m$, if a sequence $f,g$ is a regular sequence on $M$ and $M$ is $(f)$-separated, then $g$ is a regular element.
\end{lemma}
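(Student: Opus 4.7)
The plan is to verify that $g$ is a non-zero divisor on $M$ directly, using the two hypotheses (regular sequence and $(f)$-separatedness) to run a descent argument on divisibility by powers of $f$.

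Suppose $m \in M$ satisfies $gm = 0$. The goal is to show $m = 0$. First, I would reduce modulo $f$: the image $\bar m$ of $m$ in $M/fM$ satisfies $g\bar m = 0$, and since $g$ is a non-zero divisor on $M/fM$ by the definition of regular sequence, this forces $\bar m = 0$, i.e., $m \in fM$. Write $m = f m_1$ for some $m_1 \in M$.

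Next I would bootstrap: from $g f m_1 = g m = 0$ and the fact that $f$ is a non-zero divisor on $M$, cancel $f$ to obtain $g m_1 = 0$. Now $m_1$ satisfies the same hypothesis as $m$, so the same reasoning yields $m_1 \in fM$ and hence $m \in f^2 M$. Iterating this argument inductively gives $m \in \bigcap_{n \geq 1} f^n M$. The $(f)$-separatedness hypothesis states precisely that this intersection is zero, so $m = 0$.

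This proof is entirely routine; there is no real obstacle. The only subtle point worth stating explicitly is why $(f)$-separatedness (rather than, say, $f$ lying in the Jacobson radical plus a finiteness hypothesis that would let Krull's intersection theorem apply) is the right hypothesis: without it, the intersection $\bigcap_n f^n M$ may well be nonzero, and the conclusion can fail. So the plan is simply: cancel $f$ repeatedly, then invoke separatedness to conclude.
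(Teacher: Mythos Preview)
Your proposal is correct and follows essentially the same approach as the paper. The paper phrases the key step slightly differently---it invokes the standard fact that $f^n, g$ is again a regular sequence on $M$ for each $n \geq 1$ to conclude directly that $a \in f^n M$---whereas you unwind this into an explicit induction, but the underlying argument (show $m \in \bigcap_n f^n M$ and apply $(f)$-separatedness) is identical.
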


\begin{proof}
We take $a \in M$ such that $ag=0$.
Since $f,g$ is a regular sequence, so is $f^n,g$ for every integer $n \geq 1$.
Since $ag=0$ in $M/f^nM$, we have $a \in f^nM$ for every integer $n \geq 1$.
Since $M$ is $(f)$-separated, we have $a=0$, as desired.
\end{proof}

\subsection{Computation of perfectoid threshold via quasi-$F$-splitting}

\begin{theorem}\label{const-seq}
We use the notation introduced in \cref{notation:compare} and assume that $R$ is quasi-$F$-split and complete intersection.
We take a non-zero element $\eta \in H^d_\m(\var{R})$ in the socle of $H^d_\m(\var{R})$.
We construct a sequence $\{\eta_i\}_{i \geq 0}$ with $\eta_0=\eta$ and $\{n_i\}_{i \geq 0}$ using the procedure in \cref{F-W-program}.
Then we have
\[
\ppt(R;\div(p))=\sum_{m \geq 1} \frac{a_m}{p^m},
\]
where
\[
a_m:=
\begin{cases}
    p-1 & \text{if $m=\sum_{i=0}^r n_i$ for some $r \geq 0$} \\
    p-2 & \text{otherwise}
\end{cases}
\]
\end{theorem}

\begin{proof}
We take a perfectoid $R_\infty$ over $R$ as in \cite{BMPSTWW24}*{Lemma~4.23} and set
\[
\alpha_i:=\sup\{\alpha \in \Z[1/p]_{\geq 0} \mid p^\alpha\eta'_i \neq 0 \}, 
\]
where
\[
\eta'_i:=(H^d_\m(\var{R}) \to H^d_\m(\var{R}_\infty))(\eta_i)
\]
for every $i \geq 0$.
Then $\ppt(R;\div(p))=\alpha_0$ by \cref{test-perf-tests}.
\begin{claim}\label{cl:zenka}
We have
\[
\alpha_j=1-i_{n_j}+p^{-n_j}\alpha_{j+1}
\]
for every $j \geq 0$, where
\[
i_m:=\frac{1}{p}+\cdots+\frac{1}{p^m}
\]
for every integer $m\geq 1$.
\end{claim}
\begin{claimproof}
We fix $j \geq 0$ and set $n:=n_j$.
We take a system of $p$-power roots $\{p^{1/p^e}\}$ of $p$ in $R_\infty$ and $T \in R_\infty^\flat$ is the corresponding element of the projective system $\{p^{1/p^e}\}_{e \geq 0}$.
We consider the following commutative diagram;
\[
\begin{tikzcd}[row sep=0.3cm]
    \eta_j \arrow[r,phantom,"\in"] \arrow[d,mapsto]  &[0.01cm]
     H^d_\m(\var{R}) \arrow[r,"\Phi_{R,n}"] \arrow[d] & H^d_\m(Q_{R,n})  \arrow[d] & H^d_\m(F^n_*\var{R}) \arrow[d] \arrow[l,"V^{n-1}"'] &[0.01cm] \arrow[l,phantom,"\ni"]  \eta_{j+1} \arrow[d,mapsto]    \\
    \eta'_j \arrow[r,phantom,"\in"] & H^d_\m(\var{R}_\infty) \arrow[r,"\Phi_{R_{\infty},n}"] \arrow[rd,bend right=20,"\alpha"] & H^d_\m(Q_{R_\infty,n}) \arrow[d,"\var{\psi}_n"] & H^d_\m(F^n_*\var{R}_\infty)  \arrow[d,"F^{-n}"] \arrow[l,"V^{n-1}"'] & \arrow[l,phantom,"\ni"] \eta'_{j+1} \arrow[d,mapsto]  \\
     & & H^d_\m(\var{R}_\infty/(p^{i_n}))  & H^d_\m(\var{R}_\infty/(p^{1/p^n})), \arrow[l,"\cdot wp^{i_n-1}"'] & \arrow[l,phantom,"\ni"] F^{-n}(\eta'_{j+1}) 
\end{tikzcd}
\]
where $\alpha$ follows from the natural surjection and the commutativity of right bottom diagram follows from \cref{thm:purity-perfd}.
We consider the images of $\eta'_j$ and $ F^{-n}(\eta'_{j+1})$ in $H^{d+1}_\m(R_\infty)$, then we obtain
\begin{equation}\label{eq:loc-coh-ele}
    p^{(1-i_n)}(\eta'_j/p) = wp^{i_{n-1}}F^{-n}(\eta'_{j+1})/p^{i_n}=wF^{-n}(\eta'_{j+1})/(p^{1/p^n}).
\end{equation}
We consider the diagram
\[
\begin{tikzcd}
    H^{d+1}_\m(R_\infty) & \arrow[l] H^d_\m(\var{R}_\infty) \arrow[r,"\sim"] & H^d_{\m^\flat}(R_\infty^\flat/(T)) \arrow[r] & H^{d+1}_{\m^\flat}(R^\flat_\infty),
\end{tikzcd} 
\]
where $\m^\flat$ is the ideal of $R_\infty^\flat$ containing $T$ and $\m^\flat/(T) \simeq \m/(p)$, then the corresponding element to $\eta_j'/p$ and $\eta_{j+1}'/p$ are denoted by $\tau_j^\flat,\tau_{j+1}^\flat \in H^{d+1}_\m(R_\infty^\flat)$, respectively.
By (\ref{eq:loc-coh-ele}), we have 
\[
T^{(1-i_n)}\tau_j^\flat=w'F^{-n}(\tau_{j+1}^\flat)
\]
for some unit $w' \in R_\infty^\flat$.
Since $F^{-n}$ is an isomorphism, we have $T^\alpha\tau_{j+1}^\flat=0$ if and only if $T^{1-i_n+\alpha/p^n}\tau^\flat_j=0$ for $\alpha \in \Z[1/p]_{\geq 0}$.
In particular, we have $p^\alpha\eta'_{j+1}=0$ if and only if $p^{1-i_n+\alpha/p^n}\eta_j'=0$.
Therefore, we have
\[
\alpha_j=1-i_n+p^{-n}\alpha_{j+1},
\]
as desired.
\end{claimproof}\\
We write the $p$-adic expansion of $\alpha_j$ as
\[
\alpha_j=\sum_{m \geq 1}\frac{a_{j,m}}{p^m},
\]
where $a_{j,m} \in \{0,1,\ldots,p-1\}$ and the sum is not finite.
We note that since $1 \geq \alpha_j$, we can take such an expansion.
Since we have
\[
1-i_{n_j}=\frac{p-2}{p}+\cdots+\frac{p-2}{p^{n_j-1}}+\frac{p-1}{p^{n_j}},
\]
we obtain
\[
a_{jm}=
\begin{cases}
    p-2 & \text{if $m \leq n_j-1$} \\
    p-1 & \text{if $m=n_j$} \\
    a_{j+1,m} & \text{if $m \geq n_j+1$}
\end{cases}
\]
by \cref{cl:zenka}.
Therefore, we have
\[
a_{0,m}:=
\begin{cases}
    p-1 & \text{if $m=\sum_{i=0}^r n_i$ for some $r \geq 0$} \\
    p-2 & \text{otherwise},
\end{cases}
\]
as desired.
\end{proof}

\begin{theorem}\label{equiv-qfs-p-pure}\textup{(cf.~\cref{intro:ht-to-p-pure,CY-case-qfs})}
We use the notation introduced in \cref{notation:compare}
We assume $R$ is complete intersection.
Let $n$ a positive integer.
Then $\sht(R)=n$ if and only if $R$ is perfectoid pure and
\[
1-\frac{1}{p}-\cdots-\frac{1}{p^{n-1}} \geq \ppt(R;\div(p)) \geq 1- \frac{p+\cdots+p^{n-1}}{p^n-1}.
\] 
\end{theorem}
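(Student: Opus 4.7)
The plan is to prove the forward direction ($\sht(R)=n$ implies perfectoid purity and the two-sided estimate) and then derive the reverse from the disjointness of the intervals $[L_k,U_k]$, where $L_k:=1-(p+\cdots+p^{k-1})/(p^k-1)$ and $U_k:=1-i_{k-1}$. After reducing to the complete case via \cref{qfs:comp} and \cite{p-pure}*{Lemma~4.8}, the complete intersection $R$ is complete Gorenstein; Matlis duality then identifies purity of $R\to R_\infty\xrightarrow{\cdot p^\alpha}R_\infty$ (for a test perfectoid $R_\infty$ from \cite{p-pure}*{Lemma~4.23}) with injectivity on the one-dimensional socle of $H^{d+1}_\m(R)$. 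Tracing through the $\cdot p$ connecting map yields
\[
\ppt(R;\div(p)) \;=\; 1-v(\eta_\infty), \qquad v(\eta_\infty):=\sup\{\beta\in\Z[1/p]_{\geq0}:\eta_\infty\in p^\beta H^d_\m(\var R_\infty)\},
\]
where $\eta$ is a nonzero socle element of $H^d_\m(\var R)$ and $\eta_\infty$ is its image. The forward direction reduces to the two-sided bound $i_{n-1}\leq v(\eta_\infty)\leq\lambda_n$, with $\lambda_n:=1-L_n=(p+\cdots+p^{n-1})/(p^n-1)$.

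Perfectoid purity is \cref{thm:qfs-to-p-pure-comp}. The bound $v(\eta_\infty)\geq i_{n-1}$ (i.e.\ $\ppt\leq U_n$) is \cref{p-pure-to-qfs} applied at index $n-1$, valid since $\sht(R)\geq n=(n-1)+1$. The upper estimate $v(\eta_\infty)\leq\lambda_n$ is the crux. I run the program of \cref{F-W-program} from $\eta_0:=\eta$; by \cref{finish-program}(3) Case (ii-ii) never occurs, and $\sht(R)=n$ forces the index $m_k$ at every (ii-i) stage to satisfy $2\leq m_k\leq n$. At a (ii-i) stage with $\Phi_{R,m_k}(\eta_k)=V^{m_k-1}(F^{m_k}_*\eta_{k+1})$, applying $\var\psi_{m_k}$ from \cref{thm:purity-perfd} together with the identities $\var\psi_{m_k}\circ\Phi_{R_\infty,m_k}=\mathrm{quot}\colon\var R_\infty\to R_\infty/(p^{i_{m_k}})$ and $\var\psi_{m_k}(V^{m_k-1}(F^{m_k}_*\cdot))=w_k\,p^{i_{m_k-1}}F^{-m_k}(\cdot)$ for a unit $w_k$ produces in $H^d_\m(\var R_\infty)$ the congruence
\[
\eta_{k,\infty}\equiv w_k\,p^{i_{m_k-1}}\,F^{-m_k}(\eta_{k+1,\infty}) \pmod{p^{i_{m_k}}H^d_\m(\var R_\infty)}.
\]
Because $p$ is nilpotent of order $1$ in $\var R_\infty$, every nonzero class has $v<1$; the two summands on the right then have distinct valuations, so the ultrametric triangle inequality yields the exact recursion $v(\eta_{k,\infty})=i_{m_k-1}+v(\eta_{k+1,\infty})/p^{m_k}$. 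At a (i-i) stage with $\eta_{k+1}=F^{e_k}(\eta_k)$, Frobenius invertibility on $\var R_\infty$ gives $v(\eta_{k,\infty})=v(\eta_{k+1,\infty})/p^{e_k}$. Termination in Case (i-ii) at stage $k$ forces $v(\eta_{k,\infty})=0$: otherwise $F^e(\eta_{k,\infty})\in p^{p^e v(\eta_{k,\infty})}H^d_\m(\var R_\infty)$ would vanish for $e\gg0$, contradicting $F^e(\eta_k)\neq 0$. Unrolling gives
\[
v(\eta_\infty)\;=\;\sum_{k\,:\,\text{(ii-i) stage}}\frac{i_{m_k-1}}{p^{D_k}},
\]
where $D_k$ is the sum of the $m_j$'s (at (ii-i) stages) and $e_j$'s (at (i-i) stages) for $j<k$. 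Termwise maximizing over admissible trajectories -- using $i_{m_k-1}\leq i_{n-1}$, and noting that inserting a (i-i) stage only enlarges $D_j$ for later terms -- bounds the sum by the geometric series $\sum_{k\geq0}i_{n-1}/p^{nk}=i_{n-1}p^n/(p^n-1)=\lambda_n$.

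For the reverse direction, an arithmetic verification gives $U_{n+1}<L_n\leq U_n<L_{n-1}$ for every $n\geq 1$, so the intervals $\{[L_k,U_k]\}_k$ are pairwise disjoint. Since $L_n>(p-2)/(p-1)=\lim_{k\to\infty}U_k$, the contrapositive of \cref{p-pure-to-qfs} forces $\sht(R)<\infty$ once $\ppt(R;\div(p))\geq L_n$. Writing $k:=\sht(R)<\infty$, the already-established forward direction places $\ppt(R;\div(p))\in[L_k,U_k]$; disjointness then forces $k=n$.

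The hardest step is the upper bound on $v(\eta_\infty)$: the infinite-series optimization over all admissible program trajectories must be handled by a careful termwise comparison that respects both the $\sht$-constraint $m_k\leq n$ and the bookkeeping for how (i-i) and (ii-i) stages interleave. A secondary technical point is the identification $\ppt(R;\div(p))=1-v(\eta_\infty)$ itself, which uses Gorensteinness of $R$ (after completion) and the reduction of purity to injectivity on the one-dimensional socle.
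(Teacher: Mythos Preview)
Your argument is correct; the route for the lower bound $\ppt(R;\div(p))\geq L_n$ genuinely differs from the paper's. The paper does not run the general program of \cref{F-W-program}. Instead it forces a single trajectory with constant step~$n$: at each stage it argues, via \cref{lem:p-almost-zero-cri} together with injectivity of $H^d_\m(Q_{R,n-1})\to H^d_\m(Q_{R_\infty,n-1})$ (from perfectoid purity and Cohen--Macaulayness), that either $\alpha_i:=\sup\{\alpha:p^\alpha\eta'_i\neq 0\}>\varepsilon_0$ (and we are done) or $\Phi_{R,n-1}(\eta_i)=0$, whence $\eta_{i+1}$ exists with $\Phi_{R,n}(\eta_i)=V^{n-1}(F^n_*\eta_{i+1})$. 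The resulting affine recursion $\alpha_{i+1}-\varepsilon_0=p^n(\alpha_i-\varepsilon_0)$ then yields $\alpha_0\geq\varepsilon_0$ by a blow-up contradiction against $\alpha_i\geq 0$. Your approach instead allows arbitrary program trajectories (with varying $m_k\leq n$ and interleaved (i-i) steps), unrolls $v(\eta_\infty)$ into an explicit series, and bounds it; this dispenses with \cref{lem:p-almost-zero-cri} at the cost of the optimization over $(m_k)$.

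Two remarks on precision. First, your phrase ``termwise maximizing'' is not literally termwise: raising $m_k$ to $n$ increases the numerator $i_{m_k-1}$ but also inflates all later denominators $p^{D_j}$. The clean justification is the recursive inequality $i_{m-1}+\lambda_n/p^m\leq\lambda_n$ for $2\leq m\leq n$, which is equivalent to the monotonicity $\lambda_m\leq\lambda_n$, applied inductively to partial sums. Second, your congruence in $H^d_\m(\var R_\infty)$ plus the ultrametric is a slightly roundabout path to the recursion; the exact identity $p^{1-i_{m_k}}(\eta_{k,\infty}/p)=w_k\,F^{-m_k}(\eta_{k+1,\infty})/p^{1/p^{m_k}}$ in $H^{d+1}_\m(R_\infty)$ (as in the proof of \cref{ht-to-p-pure}) delivers $v_k=i_{m_k-1}+v_{k+1}/p^{m_k}$ directly, with no need to separately exclude the edge case $v(\eta_{k+1,\infty})=1$. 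The upper bound $\ppt\leq U_n$ and the converse direction match the paper.
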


\begin{proof}
We take a non-zero element $\eta \in H^d_\m(\var{R})$ in the socle of $H^d_\m(\var{R})$.
To prove the ``only if'' direction, we assume that $\sht(R) = n$.
We construct a sequence $\{\eta_i\}_{i \geq 0}$ with $\eta_0=\eta$ and $\{n_i\}_{i \geq 0}$ using the procedure in \cref{F-W-program}.
Then we have $n_i \leq n$ for every $i \geq 0$ and $n_0=n$.
By \cref{const-seq}, we have
\[
\sum_{m \geq 1}\frac{p-2}{p^m}+\sum_{m \geq n} \frac{1}{p^m} \geq \ppt(R;\div(p)) \geq \sum_{m \geq 1} \frac{p-2}{p^m}+\sum_{r \geq 1}\frac{1}{p^{rn}}.
\]
By computation, we have
\[
1-\frac{1}{p}-\cdots-\frac{1}{p^{n-1}} \geq \ppt(R;\div(p)) \geq 1- \frac{p+\cdots+p^{n-1}}{p^n-1},
\] 
as desired.

Next, we prove the ``if part''.
Since we have
\[
\ppt(R;\div(p)) \geq 1- \frac{p+\cdots+p^{n-1}}{p^n-1}  > 1-i_{n},
\]
we obtain $\sht(R) \leq n$ by \cref{p-pure-to-qfs}.
We construct a sequence $\{\eta_i\}_{i \geq 0}$ with $\eta_0=\eta$ and $\{n_i\}_{i \geq 0}$ using the procedure in \cref{F-W-program}.
Therefore, we have $n_0=n$ by \cref{const-seq}, and in particular, we obtain $\Phi_{R,n}(\eta) \neq 0$ and $\Phi_{R,n-1}(\eta) \neq 0$ if $n \geq 2$.
Since $\eta$ is an element of the socle of $H^d_\m(\var{R})$, we have $\sht(R)=n$, as desired.
\end{proof}

\begin{corollary}\label{equiv-ffinfty}\textup{(cf.~\cref{intro:ht-to-p-pure})}
We use the notation introduced in \cref{notation:compare}
We assume $R$ is complete intersection.
Let $n$ a positive integer.
Then $\ppt(R;\div (p))=1-i_{n-1}$ if and only if $R$ is quasi-$(F,F^\infty)$-split of height $n$, where $i_{n-1}=1/p+\cdots+1/p^{n-1}$.
\end{corollary}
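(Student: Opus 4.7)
The plan is to dispatch the ``if'' direction directly from \cref{ht-to-p-pure}, and then derive the ``only if'' direction in two steps: first pinning down $\sht(R)=n$, and second running the local cohomology computation of the proof of \cref{ht-to-p-pure} in reverse. For the ``if'' direction, \cref{ht-to-p-pure} already asserts that quasi-$(F,F^\infty)$-splitting of height $n$ implies that $R$ is perfectoid pure with $\ppt(R;\div(p))=1-i_{n-1}$. For the ``only if'' direction, suppose $\ppt(R;\div(p))=1-i_{n-1}$. Since $\ppt>0$, the contrapositive of \cref{p-pure-to-qfs} forces $m:=\sht(R)<\infty$. The bounds of \cref{equiv-qfs-p-pure} give $m\leq n$ (from $1-i_{m-1}\geq 1-i_{n-1}$) and, upon rearranging the lower bound, $p^m(i_m-i_{n-1})\geq 1-i_{n-1}$; for any $m\leq n-1$ the left side is nonpositive while the right side is strictly positive (as $i_{n-1}<1$), so $m=n$.

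Fix a nonzero socle element $\eta\in H^d_\m(\var{R})$. Since $\sht(R)=n$ implies $\Phi_{R,n-1}(\eta)=0$, there is a unique preimage $\eta_1\in H^d_\m(\var{R})$ of $\Phi_{R,n}(\eta)$ under $V^{n-1}$, uniqueness coming from $H^{d-1}_\m(Q_{R,n-1})=0$ (which follows from the Cohen-Macaulayness of $Q_{R,n-1}$ proved in \cref{CM-Q}). The goal is to show that $F^e(\eta_1)\neq 0$ in $H^d_\m(\var{R})$ for every $e\geq 0$, since by uniqueness of $\eta_1$ this is equivalent to $\Phi_{R,n,e}(\eta)\neq 0$ for every $e\geq 1$, and hence to $R$ being quasi-$(F,F^\infty)$-split of height $n$. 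Let $R_\infty$ denote the test perfectoid of \cref{test-perf-tests}(3) and $\eta',\eta'_1$ the images of $\eta,\eta_1$ in $H^d_\m(\var{R}_\infty)$. The proof of \cref{ht-to-p-pure} produces the identity $p^{1-i_n}\,\partial\eta'=w\,\partial_n F^{-n}(\eta'_1)$ in $H^{d+1}_\m(R_\infty)$, where $w$ is a unit lift of the unit of \cref{thm:purity-perfd} and $\partial_n$ is the connecting homomorphism associated to multiplication by $p^{1/p^n}$ on $R_\infty$. Comparing the short exact sequences attached to multiplication by $p^{1/p^n}$ and by $p^{1/p^{n+e-1}}$ yields $p^{1/p^n-1/p^{n+e-1}}\partial_n=\partial_{n+e-1}\circ\mathrm{proj}$, and the perfectoid Frobenius isomorphism $F^{-(n+e-1)}\colon\var{R}_\infty\xrightarrow{\sim}R_\infty/p^{1/p^{n+e-1}}$ identifies $\mathrm{proj}(F^{-n}(\eta'_1))$ with $F^{-(n+e-1)}(F^{e-1}(\eta'_1))$. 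Multiplying the displayed identity by $p^{1/p^n-1/p^{n+e-1}}$ and using the arithmetic identity $1-i_n+1/p^n=1-i_{n-1}$, we arrive at
\[
p^{1-i_{n-1}-1/p^{n+e-1}}\,\partial\eta'=w\,\partial_{n+e-1}\bigl(F^{-(n+e-1)}(F^{e-1}(\eta'_1))\bigr).
\]
The hypothesis $\ppt(R;\div(p))=1-i_{n-1}$ forces the left side to be nonzero for every $e\geq 1$; the injectivity of $\partial_{n+e-1}$ (from $H^d_\m(R_\infty)=0$) together with $F^{-(n+e-1)}$ being an isomorphism then forces $F^{e-1}(\eta'_1)\neq 0$, and functoriality of Frobenius yields $F^{e-1}(\eta_1)\neq 0$ for every $e\geq 1$.

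The main obstacle I anticipate is the careful bookkeeping for the two connecting homomorphisms $\partial_n,\partial_{n+e-1}$ together with the perfectoid-Frobenius identification of $\mathrm{proj}(F^{-n}(\eta'_1))$ with $F^{-(n+e-1)}(F^{e-1}(\eta'_1))$; once these identifications are in place, the arithmetic identity $1-i_n+1/p^n=1-i_{n-1}$ makes the contradiction emerge immediately from the displayed equation.
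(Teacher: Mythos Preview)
Your argument is essentially correct and follows the same route as the paper. One correction is needed: the claim that ``$\ppt>0$ forces $\sht(R)<\infty$ via the contrapositive of \cref{p-pure-to-qfs}'' fails for $p\geq 3$, since then $\lim_m(1-i_m)=(p-2)/(p-1)>0$, so \cref{p-pure-to-qfs} only yields $\ppt\leq (p-2)/(p-1)$ when $\sht(R)=\infty$. The fix is immediate: $\ppt=1-i_{n-1}$ lies in the interval appearing in \cref{equiv-qfs-p-pure} for height $n$, so that theorem (being an equivalence) gives $\sht(R)=n$ directly---this is exactly what the paper does, and it renders your two-step detour unnecessary.

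Your main computation showing $F^{e-1}(\eta'_1)\neq 0$ is correct and is a faithful unpacking of the paper's argument. The paper phrases the key intermediate step as ``$p^{\alpha}\eta_1'\neq 0$ for $1>\alpha\geq 0$'' (meaning the image of $\eta'_1$ in $H^d_\m(R_\infty/p^{1-\alpha})$ is nonzero) and then observes that $F^{-e}\circ F^e$ coincides with the natural projection $H^d_\m(\var{R}_\infty)\to H^d_\m(R_\infty/p^{1/p^e})$; your version with the connecting maps $\partial_n,\partial_{n+e-1}$, the identity $\mathrm{proj}\circ F^{-n}=F^{-(n+e-1)}\circ F^{e-1}$, and the arithmetic $1-i_n+1/p^n=1-i_{n-1}$ says the same thing in coordinates.
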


\begin{proof}
We take a non-zero element $\eta \in H^d_\m(\var{R})$ in the socle of $H^d_\m(\var{R})$.
To prove the ``if'' part, we assume that $R$ is quasi-$(F,F^\infty)$-split of height $n$.
We construct a sequence $\{\eta_i\}_{i \geq 0}$ with $\eta_0=\eta$ and $\{n_i\}_{i \geq 0}$ using the procedure in \cref{F-W-program}.
By the definition of quasi-$(F,F^\infty)$-split, we have $n_0=n$ and $n_i =1$ for every $i \geq 1$.
Thus, we obtain $\ppt(R;\div (p))=1-i_{n-1}$ by \cref{const-seq}, as desired.

To prove the ``only if' part, we assume $\ppt(R;\div (p))=1-i_{n-1}$, then $\sht(R)=n$ by \cref{equiv-qfs-p-pure}.
We construct a sequence $\{\eta_i\}_{i \geq 0}$ with $\eta_0=\eta$ and $\{n_i\}_{i \geq 0}$ using the procedure in \cref{F-W-program}.
By \cref{const-seq}, we have $n_0=n$ and $n_i = 1$ for $i \geq 1$.
Therefore, the ring $R$ is quasi-$(F,F^\infty)$-split of height $n$, as desired.
\end{proof}

\begin{example}\label{ex:threshold}
By \cref{equiv-ffinfty}, we can compute perfectoid pure thresholds for examples in \cref{ex:fedder} other than (1).
We will compute perfectoid pure threshold of \cref{ex:fedder} (1) in \cref{fermat-ell}.
\begin{enumerate}
    \item Let $R:=\Z_{(p)}[[x,y,z]]/(z^2+y^3+z^5)$, then 
    \[
    \ppt(R;\div(p))=
    \begin{cases}
        1/8 & \textup{if $p=2$}, \\
        5/9 & \textup{if $p=3$}, \\
        4/5 & \textup{if $p=5$}, \\
        1 & \textup{otherwise}.
    \end{cases}
    \]
    We note that if $p \geq 7$, then $\var{R}$ is $F$-pure.
    \item Let $R:=\Z_{(p)}[[x,y,z,w]]/(w^2+xyz(x+y+z))$ and $p=2$, then $\ppt(R;\div(p))=1/2$.
    \item Let $R:=\Z_{(p)}[[x,y,z,w]]/(w^2+xyz(x+y+z)+p(xy+xz+yz)w)$ and $p=2$, then $\ppt(R;\div(p))=1/4$.
    \item Let $R:=\Z_{(p)}[[x,y,z]]/(z^2+x^2y+xy^n)$ for $n \geq 2$ and $p=2$, then $\ppt(R;\div(p))=1/2^{\rup{\log_2 n}}$.
\end{enumerate}    
\end{example}

\begin{corollary}\label{qfs-reg-bcm-reg}\textup{(cf.~\cite{p-pure}*{Proposition~6.7})}
Let $(R,\m)$ be a Noetherian complete local domain with $p \in \m \backslash \{0\}$.
We assume $R$ is complete intersection.
If $R/pR$ is quasi-$F$-split and $R[1/p]$ is regular, then $R$ is BCM-regular.
\end{corollary}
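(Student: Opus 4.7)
The plan is to reduce the statement to the main result \cref{thm:qfs-to-p-pure-comp} and then invoke the bridge from perfectoid purity to BCM-regularity proved in \cite{p-pure}*{Proposition~6.7}. First, I would check that the hypotheses of \cref{thm:qfs-to-p-pure-comp} are satisfied: $R$ is a Noetherian local $\Z_{(p)}$-algebra, it is $p$-torsion free (since it is a domain and $p \neq 0$), $p \in \m$, and it is complete intersection. Therefore, since $R/pR$ is quasi-$F$-split by hypothesis, \cref{thm:qfs-to-p-pure-comp} directly gives that $R$ is perfectoid pure.

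Second, I would upgrade perfectoid purity to BCM-regularity using the cited result. The mechanism is the following: once $R$ is perfectoid pure, every extension $R \to B$ to a perfectoid BCM $R$-algebra restricted to the generic fibre $R[1/p] \to B[1/p]$ is automatically pure because $R[1/p]$ is regular (any faithfully flat, or even just pure, map out of a regular ring is pure). The regularity of $R[1/p]$ therefore localizes the obstruction to purity to the closed fibre, where perfectoid purity and the complete intersection hypothesis (giving Gorenstein, hence a good duality behaviour in local cohomology) together force $R \to B$ to be pure. This is precisely the content of \cite{p-pure}*{Proposition~6.7}, which yields the BCM-regularity of $R$ (and not merely perfectoid BCM-regularity), since in the mixed characteristic setting any BCM $R$-algebra can be mapped into a perfectoid BCM $R$-algebra via Andr\'e's lemma, and purity is preserved under such extensions.

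The main obstacle is conceptual and lies entirely in the second step: perfectoid purity a priori only tests perfectoid $R$-algebras, whereas BCM-regularity tests all BCM $R$-algebras. The assumption that $R[1/p]$ is regular, combined with $R$ being complete intersection, is exactly what makes this passage possible via the referenced proposition. The first step, by contrast, is just an immediate application of the theory developed in this paper.
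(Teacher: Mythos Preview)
Your reduction in Step~1 is fine, but Step~2 has a genuine gap. Merely knowing that $R$ is perfectoid pure (i.e., $\ppt(R;\div(p)) \geq 0$) is not what the argument in \cite{p-pure}*{Proposition~6.7} uses. What that proof actually yields, in contrapositive form, is: if $R$ is \emph{not} BCM-regular (under the regularity hypothesis on $R[1/p]$), then the pair $(R,p^{\varepsilon}\div(p))$ fails to be perfectoid pure for \emph{every} $\varepsilon \in \Z[1/p]_{>0}$, i.e., $\ppt(R;\div(p)) = 0$. Your informal ``localizing the obstruction to the closed fibre'' story does not explain why perfectoid purity alone would contradict this; indeed, perfectoid purity of $R$ is consistent with $\ppt(R;\div(p)) = 0$.

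The missing ingredient is the strictly positive lower bound on the threshold, which is exactly what \cref{equiv-qfs-p-pure} provides: if $\sht(R) = n < \infty$ then
\[
\ppt(R;\div(p)) \;\geq\; 1 - \frac{p+\cdots+p^{n-1}}{p^n-1} \;>\; 0.
\]
Since $R/pR$ quasi-$F$-split gives $\sht(R) \leq \sht(\var{R}) < \infty$ by \cref{compsre-posi-case}, you get a contradiction with the output of the proof of \cite{p-pure}*{Proposition~6.7}. So you should invoke \cref{equiv-qfs-p-pure} rather than the weaker \cref{thm:qfs-to-p-pure-comp}. Also note that the statement of \cite{p-pure}*{Proposition~6.7} itself (as opposed to its proof) presumably has $\var{R}$ $F$-pure as hypothesis, which is why the paper writes ``cf.'' and quotes the proof rather than the statement; you cannot simply cite it as a black box under the quasi-$F$-split hypothesis.
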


\begin{proof}
If $R$ is not BCM-regular, then $(R,p^{\varepsilon}\div(p))$ is not perfectoid pure by the proof of \cite{p-pure}*{Proposition~6.7} for every $\varepsilon \in \Z[1/p]_{>0}$.
Therefore, $R$ is BCM-regular by \cref{equiv-qfs-p-pure}.
\end{proof}

\begin{corollary}\label{range-ppt}\textup{(\cref{intro:range-ppt})}
We use the notation introduced in \cref{notation:compare}
We assume $R$ is complete intersection and perfectoid pure.
If $\ppt(R;\div(p)) > \frac{p-2}{p-1}$, then there exists a positive integer $n$ such that $R$ is quasi-$n$-split, and in particular, we have
\[
1-\frac{1}{p}-\cdots-\frac{1}{p^{n-1}} \geq \ppt(R;\div(p)) \geq 1- \frac{p+\cdots+p^{n-1}}{p^n-1}.
\]    
\end{corollary}

\begin{proof}
Since we have
\[
\ppt(R;\div(p)) > \frac{p-2}{p-1}=\lim_{n \to \infty} (1-i_n),
\]there exists a positive integer $m$ such that $\ppt(R;\div(p)) > 1-i_m$.
By \cref{p-pure-to-qfs}, we have that $R$ is $m$-quasi-$F$-split, and in particular, there exists a positive integer $n$ such that $\sht(R)=n$.
By \cref{intro:ht-to-p-pure}, we have
\[
1-\frac{1}{p}-\cdots-\frac{1}{p^{n-1}} \geq \ppt(R;\div(p)) \geq 1- \frac{p+\cdots+p^{n-1}}{p^n-1},
\]  
as desired.
\end{proof}

\section{Case of graded rings}
In this section, we study quasi-$F$-splitting of graded rings.
The results vary greatly depending on an invariant called a-invariant.
\begin{notation}\label{notation:graded}
Let $S=\bigoplus_{i \in \Q_{\geq 0}} S_i$ be a Noetherian graded ring such that $S_0$ is a divisorial valuation ring with $p$ in the maximal ideal of $S_0$, $S$ is $p$-torsion free, and $S_i=0$ if $i \notin \Z$.
We set $\m:=(p, S_{>0})$, $R:=S_\m$, and $d:=\dim{\var{R}}$.
Then $R$ satisfies  conditions in \cref{notation:compare}.
Furthermore, we define $S^{(m)}:=\bigoplus_{i \geq 0} S_{im}$ as graded ring for every $m \in \Q_{>0}$.
For a graded $S$-module $M$ and $m \in \Q$, we define a graded $S$-module $M(m)$ by $M(m)_i=M_{i+m}$ for all $i \in \Q$. 
Since $H^d_\m(\var{R}) \simeq H^d_\m(\var{S})$, we endow $H^d_\m(\var{R})$ with a graded structure.
We set $a(\var{S}):=\max\{m \in \Z \mid H^{d}_\m(\var{S})_m \neq 0\}$.
Furthermore, we define the $\Q$-graded structure on $W_n(S)$ by 
\[
W_n(S)_i:=\{(a_0,\ldots,a_{n-1}) \in W_n(S) \mid a_0 \in S_i,\ldots, a_{n-1} \in S_{p^{n-1}i} \} 
\]
for $i \in \Q$ (cf.~\cite{KTTWYY2}*{Proposition~7.1}).
We consider the graded structure on $Q_{S,n}$ by $\var{W}_n(S)^{(p)}$.
Then we also endow $H^d_\m(Q_{R,n})$ with a graded structure.
\end{notation}

\subsection{Case of $a(\var{S})<0$}
In this subsection, we study the case where $a(\var{S})<0$, and the case is related to the cone of log Fano pairs (cf.~\cref{log-fano,log-fano-pair}).

\begin{proposition}\label{program-degree}
We use the notation introduced in \cref{notation:graded} and assume $R$ is quasi-$F$-split.
Let $\eta \in H^d_\m(\var{R})\backslash \{0\}$ be a homogeneous element of degree $\nu$.
Then we can construct sequences $\{\eta_i \in H^d_\m(\var{R}) \backslash \{0\} \}_{i \geq 0}$ and $\{n_i \in \Z_{\geq 1}\}_{i \geq 0}$ using a procedure in \cref{F-W-program} such that $\eta_i$ is a homogeneous element of degree $p^{\sum_{j=0}^{i-1} n_j} \nu$.  
\end{proposition}

\begin{proof}
We note that $\Phi_{S,n} \colon S \to Q_{S,n}$, $V^{n-1} \colon F^n_*\var{S}^{(p^n)} \to Q_{S,n}$ and $F \colon \var{S} \to F^e_*\var{S}^{(p)}$ are homomorphisms of graded $S$-modules.
We assume that we get a homogeneous element $\eta_{i}$ of degree $p^{e_{i}}\nu$ by a procedure in \cref{F-W-program} for an integer $i \geq 0$, where $e_i=\sum_{j=0}^{i-1}n_j$.
If $n_{i}=1$, then $F(\eta_{i})=\eta_{i+1}$ is a homogeneous element of degree $p^{e_{i}+1}\nu$, as desired.
If $n_{i} \geq 2$, then since $\Phi_{R,n_{i}}(\eta_i)$ is homogeneous, we can take $\eta_{i+1}$ as a homogeneous element.
Furthermore, the degree of $\eta_{i+1}$ is $p^{e_i+n_i}\nu$, as desired.
\end{proof}

\begin{theorem}\label{Fano}\textup{(\cref{intro:Fano})}
We use the notation introduced in \cref{notation:graded}.
We assume $\var{S}$ is normal quasi-Gorenstein and satisfies $a(\var{S}) < 0$.
We further assume that $\Spec{\var{S}} \backslash \{\m\var{S}\}$ is strongly $F$-regular.
If $R$ is quasi-$F$-split, then $R$ is perfectoid BCM-regular.
Furthermore, if $p=2$, the converse implication holds.
\end{theorem}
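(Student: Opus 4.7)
The converse implication (when $p=2$) is immediate: if $R$ were perfectoid BCM-regular but not quasi-$F$-split, \cref{non-qfs-to-non-bcm} would produce a contradiction. So I focus on the forward direction.

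Assume $R$ is quasi-$F$-split; the plan is to mimic the strategy of the perfectoid purity results (notably \cref{ht-to-p-pure} and \cref{i-ii-to-p-pure}), but now exploit the graded structure via \cref{program-degree}. Suppose for contradiction that $R$ is not perfectoid BCM-regular, so there exist a perfectoid BCM $R$-algebra $B$ and a nonzero homogeneous socle element $\eta\in H^d_\m(\var{R})$ whose image in $H^d_\m(\var{B})$ is zero. By quasi-Gorensteinness of $\var{S}$ one can take $\eta$ of degree $\nu=a(\var{S})<0$. Since $\var{B}$ is Cohen--Macaulay (the regular system of parameters on $B$ includes $p$, so reducing mod $p$ gives a regular sop on $\var{B}$), we have in particular $H^{d-1}_\m(\var{B})=0$, and \cref{ker-perf-pre} then implies that the full orbit $\{\eta_i\}$ produced by \cref{F-W-program} also maps to zero in $H^d_\m(\var{B})$.

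By \cref{program-degree}, each $\eta_i$ is homogeneous of degree $p^{e_i}\nu$ with $e_i$ strictly increasing, so the degrees tend to $-\infty$. Because $R$ is quasi-$F$-split, case (ii-ii) never occurs (\cref{finish-program}(3)), so the program either stops in case (i-ii) or continues forever. The core claim is that it must stop in case (i-ii). Once that is established, a variant of \cref{i-ii-to-p-pure} — replacing $R^{S_\infty}_{perfd}$ by a perfectoidization extracted from $B$, and using that $\var{B}$ is semi-perfect so that its perfection receives $\var{R}_{perf}$ — yields that $\eta_i$ has nonzero image in the perfection of $\var{B}$, contradicting the vanishing just established.

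The main obstacle, and the place where the hypothesis that $\Spec\var{S}\setminus\{\m\var{S}\}$ is strongly $F$-regular is consumed, is the termination claim. Heuristically, strong $F$-regularity on the punctured spectrum supplies, for every sufficiently negative graded piece of $H^d_\m(\var{R})$, Frobenius-trace splittings that detect it away from the cone point; the degrees $p^{e_i}\nu\to-\infty$ forced by $a(\var{S})<0$ eventually push $\eta_i$ into the regime where such trace splittings become available, forcing $F^e(\eta_i)\neq 0$ for all $e\geq 1$, which is precisely case (i-ii). Making this rigorous — tracking the graded decomposition of $\var{W}_n(S)$ set up in \cref{notation:graded} against Frobenius-trace splittings descending from $\Proj\var{S}$, and in particular showing that in case (ii-i) the Verschiebung-preimage $\eta_{i+1}$ inherits enough Frobenius nondegeneracy to keep the program moving toward (i-ii) rather than looping indefinitely in case (ii) — is the technical heart of the argument.
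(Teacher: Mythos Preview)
Your overall architecture is close to the paper's, but the step you flag as routine is actually the gap, and the step you flag as the technical heart is the easy part.

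The termination claim --- that for large $i$ the element $\eta_i$ lands in case (i-ii) --- is essentially correct and matches the paper: since $\var{R}$ is $F$-pure on the punctured spectrum, $\Ker(F\colon H^d_\m(\var{R})\to H^d_\m(F_*\var{R}))$ has finite length, hence is concentrated in finitely many degrees; as $\deg(\eta_i)=p^{e_i}\nu\to-\infty$, eventually $\eta_j$ and all $F^e(\eta_j)$ lie in degrees where $F$ is injective, so $\eta_j$ is in case (i-ii). Note this only uses $F$-\emph{purity} on the punctured spectrum, not strong $F$-regularity.

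The real problem is your contradiction step. You propose ``a variant of \cref{i-ii-to-p-pure}'' with $B$ in place of $R^{S_\infty}_{perfd}$, but that proposition hinges on the special identity $(\var{R}_\infty)_{perfd}\simeq \var{R}_{perfd}$, which says that $\Ker\bigl(H^d_\m(\var{R})\to H^d_\m(\var{R}_\infty)\bigr)$ is exactly the Frobenius-nilpotent locus. For an arbitrary perfectoid BCM algebra $B$ there is no such identity; knowing $F^e(\eta_j)\neq 0$ for all $e$ tells you the image of $\eta_j$ in $H^d_\m(\var{R}_{perf})$ is nonzero, but says nothing about its image in $H^d_\m(\var{B})$ or $H^d_\m(\var{B}_{perf})$. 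Semi-perfectness of $\var{B}$ does not help here.

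What the paper does instead is pass through tight closure. Since $\var{B}$ is a big Cohen--Macaulay $\var{R}$-algebra, any element of $\Ker\bigl(H^d_\m(\var{R})\to H^d_\m(\var{B})\bigr)$ lies in $0^*_{H^d_\m(\var{R})}$ (this is \cite{bcm-reg}*{Proposition~5.3}). The key technical input --- and the place where \emph{all} of the hypotheses (normal quasi-Gorenstein, $a(\var{S})<0$, strongly $F$-regular punctured spectrum) are genuinely consumed --- is \cref{thm:fano-test-f-pure}, which shows $0^*_{H^d_\m(\var{R})}=\bigcap_{e}\Ker(F^e)$. Hence every $\eta_i$ in the kernel to $\var{B}$ is Frobenius-nilpotent, i.e.\ \emph{not} in case (i-ii); this contradicts the termination claim. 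So the logic is reversed from yours: one shows that membership in the kernel \emph{forbids} case (i-ii), and then obtains the contradiction from the fact that the degree argument \emph{forces} case (i-ii).

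Finally, you omit the endgame: from injectivity of $H^d_\m(\var{R})\to H^d_\m(\var{B})$ one first gets purity of $\var{R}\to\var{B}$ using that $\var{R}$ is quasi-Gorenstein; since $\var{B}$ is BCM over $\var{R}$ this forces $\var{R}$ (hence $R$) to be Gorenstein, and only then does injectivity on $H^{d+1}_\m$ give purity of $R\to B$.
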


\begin{proof} 
We take a perfectoid BCM-algebra $B$ over $R$.
First, we prove that $H^d_\m(\var{R}) \to H^d_\m(\var{B})$ is injective.
We take a homogeneous element $\eta \in H^d_\m(\var{R})$ contained in the socle of $H^d_\m(\var{R})$.
Since $a(\var{S}) < 0$, the degree of $\eta$ is negative.
By \cref{program-degree}, we can take sequences $\{\eta_i\}_{i \geq 0}$ and $\{n_i\}_{i \geq 0} $ using a procedure in \cref{F-W-program} such that $\eta_i$ is homogeneous of degree $\nu_i:=p^{\sum_{j=0}^{i-1}}\nu$.
We set
\[
\eta_i':=\left(H^d_\m(\var{R}) \to H^d_\m(\var{B}) \right)(\eta_i).
\]
Since $\var{R}$ is $F$-pure outside of $\m\var{R}$, the kernel of $F\colon H^d_\m(\var{R}) \to H^d_\m(F_*\var{R})$ is of finite length, and in particular, there exists $i \geq 0$ such that $F^e(\eta_i) \neq 0$ for every $e \geq 1$.
By \cref{thm:fano-test-f-pure}, we have 
\[
\eta_i \notin 0^*_{H^d_\m(\var{R})}.
\]
Furthermore, since we have
\[
\mathrm{Ker}\left(H^d_\m(\var{R}) \to H^d_\m(\var{B})\right) \subseteq
0^*_{H^d_\m(\var{R})} 
\]
by the proof of \cite{bcm-reg}*{Proposition~5.3}, we obtain $\eta_i' \neq 0$.
Since $H^{d-1}_\m(\var{B})=0$, we obtain $\eta_j'  \neq 0$ for every $j \leq i$, inductively.
In particular, we have $\eta'_0 \neq 0$ and the map $H^d_\m(\var{R}) \to H^d_\m(\var{B})$ is injective, as desired.

Since $\var{S}$ is quasi-Gorenstein, so is $\var{R}$, and in particular, the homomorphism $\var{R} \to \var{B}$ is pure.
Since $\var{B}$ is a BCM-algebra over $\var{R}$, we obtain that $\var{R}$ is Gorenstein, thus so is $R$.
Since $H^d_\m(\var{R}) \to H^d_\m(\var{B})$ is injective and $B$ is a BCM-algebra over $R$, we obtain the injectivity of $H^{d+1}_\m(R) \to H^{d+1}_\m(B)$, thus the homomorphism $R \to B$ is pure, as desired.
Next, we assume $p=2$ and $R$ is BCM-regular.
Since $R$ is Gorenstein, $R$ is quasi-$F$-split by \cref{non-qfs-to-non-bcm}.
\end{proof}

\begin{corollary}\label{log-fano}
Let $k$ be an algebraically closed field of characteristic $p$.
Let $X$ be a normal integral projective flat scheme over $\Spec{W(k)}$.
We assume $-K_X$ is ample $\Q$-Cartier.
We set 
\[
S:=\bigoplus_{m \geq 0} H^0(X,\cO_X(-mK_X)).
\]
The closed fiber of $X \to \Spec{W(k)}$ is denoted by $\var{X}$ and we assume that $\var{X}$ is klt.
We assume one of the following conditions:
\begin{enumerate}
    \item $\var{X}$ is strongly $F$-regular and quasi-$F$-split, or
    \item $\dim{\var{X}}=2$ and $p > 5$.
\end{enumerate}
Then $S_{(p,S_{>0})}$ is perfectoid BCM-regular.
\end{corollary}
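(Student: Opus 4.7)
The approach is to reduce the corollary to Theorem \ref{Fano} applied to $R := S_{(p, S_{>0})}$. Writing $\var{S} := S/pS$, the first step is to identify $\var{S}$ with the anticanonical section ring $\bigoplus_{m \geq 0} H^0(\var{X}, \cO_{\var{X}}(-m K_{\var{X}}))$ of $\var{X}$. This identification follows from flat base change combined with the vanishing $H^i(\var{X}, -m K_{\var{X}}) = 0$ for $i > 0$ and $m \geq 0$: in case (1), this vanishing is a consequence of strong $F$-regularity of $\var{X}$ together with Serre vanishing, and in case (2), it follows from Kawamata-Viehweg vanishing, which holds on 2-dimensional klt varieties when $p > 5$.

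I then verify the four hypotheses of Theorem \ref{Fano}. Normality of $\var{S}$ follows from normality of $\var{X}$ (implied by klt), and the quasi-Gorenstein condition follows from the section-ring formula $\omega_{\var{S}} \cong \var{S}(-1)$ coming from $\omega_{\var{X}} \cong \cO_{\var{X}}(K_{\var{X}})$. The inequality $a(\var{S}) < 0$ is obtained from the graded identification $H^d_\m(\var{S})_m \cong H^{d-1}(\var{X}, -m K_{\var{X}})$ (valid as $\var{X}$ is Cohen-Macaulay; in case (1) by strong $F$-regularity, in case (2) as any 2-dimensional normal scheme is Cohen-Macaulay), followed by Serre duality, which rewrites the right-hand side as $H^0(\var{X}, (m+1) K_{\var{X}})^\vee$; this vanishes for $m \geq 0$ since $(m+1) K_{\var{X}}$ is anti-ample and $\var{X}$ is positive-dimensional and connected. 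Strong $F$-regularity of $\Spec \var{S} \setminus \{\m \var{S}\}$ follows because this punctured spectrum is, locally, a $\bG_m$-bundle over $\var{X}$, so strong $F$-regularity descends from $\var{X}$: given in case (1), and in case (2) by the classical result that 2-dimensional klt singularities in $p > 5$ are strongly $F$-regular.

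Finally, quasi-$F$-splitness of $R$ reduces, via \cref{compsre-posi-case}, to quasi-$F$-splitness of $\var{R} = \var{S}_\m$; by a standard cone correspondence enabled by $a(\var{S}) < 0$, this is equivalent to quasi-$F$-splitting of $\var{X}$ itself. The latter is given in case (1), and in case (2) follows from recent results on quasi-$F$-splitting of klt Fano surfaces in characteristic $p > 5$. Once these four hypotheses hold, Theorem \ref{Fano} directly yields that $R$ is perfectoid BCM-regular. The main obstacle is this cone correspondence in the last step, since the Witt-vector construction defining quasi-$F$-splitting interacts nontrivially with the $\Q$-grading on $\var{S}$; the Kodaira-type vanishings from the first paragraph are essential in controlling this interaction, and in case (2) one additionally needs the geometric input that 2-dimensional klt Fanos in $p > 5$ are quasi-$F$-split.
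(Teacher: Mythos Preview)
Your overall strategy—reduce to Theorem \ref{Fano} by verifying its hypotheses for the anticanonical ring—matches the paper's, but there are two substantive gaps.

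The more serious one is your identification $\var{S}\simeq\bigoplus_m H^0(\var{X},-mK_{\var{X}})$. You appeal to ``flat base change,'' but $-K_X$ is only $\Q$-Cartier, so $\cO_X(-mK_X)$ is merely a reflexive sheaf and there is no reason a priori that $\cO_X(-mK_X)\otimes_{\cO_X}\cO_{\var{X}}\simeq \cO_{\var{X}}(-mK_{\var{X}})$: the left-hand side need not even be reflexive on $\var{X}$. The paper supplies the missing depth condition by invoking \cite{bcm-reg}*{Theorem~6.27}: since $\var{X}$ is strongly $F$-regular, every stalk of $\cO_X$ is perfectoid BCM-regular, hence every divisorial sheaf on $X$ is Cohen--Macaulay (in particular $S_3$); the desired isomorphism then follows by checking it at codimension-one points of $\var{X}$. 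This input is not a formality and cannot be suppressed.

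The second gap is your source of $H^1$-vanishing in case (1). ``Strong $F$-regularity together with Serre vanishing'' does not give $H^1(\var{X},-mK_{\var{X}})=0$ for all $m\ge 0$: strong $F$-regularity here is a \emph{local} hypothesis on $\var{X}$ and yields no global cohomological vanishing, while Serre vanishing only handles $m\gg 0$. The paper instead uses the Kodaira-type vanishing available for quasi-$F$-split varieties (\cite{KTTWYY1}*{Theorem~3.15})—this is exactly why the quasi-$F$-split hypothesis is present in (1). For case (2), rather than arguing separately, the paper first reduces to (1) via \cite{KTTWYY2}*{Theorem~6.3} and \cite{Hara2}*{Theorem~1.1} (klt surfaces in $p>5$ are strongly $F$-regular and quasi-$F$-split); your Kawamata--Viehweg route is reasonable for the vanishing, but you still need quasi-$F$-splitting of $\var{X}$ for the cone correspondence at the end, so the reduction is cleaner.
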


\begin{proof}
By \cite{KTTWYY2}*{Theorem~6.3} and \cite{Hara2}*{Theorem~1.1}, we may assume $\var{X}$ is strongly $F$-regular and quasi-$F$-split.
We set $d:=\dim(X)$ and
\[
S':=\bigoplus_{m \geq 0} H^0(\var{X},\cO_{\var{X}}(-mK_{\var{X}})).
\]
Then $S'$ is normal, quasi-Gorenstein by \cite{watanabe81}*{Theorem~2.8}, and quasi-$F$-split by \cite{KTTWYY2}*{Theorem~7.16} and \cite{kty}*{Proposition~2.25}.
Since we have
\[
H^d_\m(S')_m \simeq H^{d-1}(\var{X},\cO_{\var{X}}(-mK_{\var{X}})) \simeq H^0(\var{X},\cO_{\var{X}}((m+1)K_{\var{X}}))
\]
and $-K_{\var{X}}$ is ample, we obtain $a(\var{S'}) < 0$.
Furthermore, since $\var{X}$ is strongly $F$-regular, so is $\Spec{S'} \backslash \{S'_{>0}\}$.
Therefore, it is enough to show that $S/pS \simeq S'$ as graded rings by \cref{Fano}.
We note that if $S/pS \simeq S'$, then we have 
\[
H^0(X,\cO_X) \otimes_{W(k)} k \simeq S_0/pS_0 \simeq S'_0 \simeq H^0(\var{X},\cO_{\var{X}}) \simeq k,
\]
and in particular, $S_0$ is divisorial valuation ring with the maximal ideal $(p)$.
Furthermore, by \cite{bcm-reg}*{Theorem~6.27}, every stalk of $\cO_X$ is perfectoid BCM-regular, and in particular, the sheaf $\cO_{X}(-mK_X)$ is Cohen-Macaulay. 
Since we have $\cO_X(-mK_X) \otimes_{\cO_X} \cO_{\var{X}} \simeq \cO_{\var{X}}(-mK_{\var{X}})$ on the codimension one point of $\var{X}$ and $\cO_{X}(-mK_X)$ satisfies condition $S_3$ for each integer $m$, we obtain $\cO_X(-mK_X) \otimes_{\cO_X} \cO_{\var{X}} \simeq \cO_{\var{X}}(-mK_{\var{X}})$.
Since $\var{X}$ is quasi-$F$-split, we obtain $H^1(\var{X},\cO_{\var{X}}(-mK_{\var{X}}))=0$ for every integer $m \geq 0$ by \cite{KTTWYY1}*{Theorem~3.15}.
By \cite{hartshorne}*{Theorem~12.11}, we obtain
\[
H^0(X,\cO_X(-mK_X)) \otimes_{W(k)} k \simeq H^0(\var{X},\cO_{\var{X}}(-mK_{\var{X}})),
\]
thus, we obtain $S/pS \simeq S'$, as desired.
\end{proof}

\begin{corollary}\label{log-fano-pair}
Let $k$ be an algebraically closed field of characteristic $p$.
Let $X$ be a normal integral projective flat scheme over $\Spec{W(k)}$ and $\Delta$ an effective $\Q$-Weil divisor on $X$, whose components are flat over $\Spec{W(k)}$.
We assume that $-(K_X+\Delta)$ is ample $\Q$-Cartier and $\Delta$ has standard coefficients.
We set 
\[
S:=\bigoplus_{m \geq 0} H^0(X,\cO_X(-m(K_X+\Delta))).
\]
The closed fiber of $(X,\Delta) \to \Spec{W(k)}$ is denoted by $(\var{X},\var{\Delta})$ and we assume that $(\var{X},\var{\Delta})$ is strongly $F$-regular, the Picard rank of $\var{X}$ is one, and  $\var{X}$ is $\Q$-factorial.
We assume one of the following conditions:
\begin{enumerate}
    \item $(\var{X},\var{\Delta})$ is quasi-$F$-split,
    \item $\dim{\var{X}}=1$, or
    \item $\dim{\var{X}}=2$ and $p > 41$.
\end{enumerate}
Then $S_{(p,S_{>0})}$ is perfectoid BCM-regular.
\end{corollary}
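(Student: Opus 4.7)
The plan is to mimic the proof of \cref{log-fano} with additional care for the boundary divisor. First, I would reduce cases (2) and (3) to case (1). In case (2), $\var X$ is a $\Q$-factorial curve of Picard rank one, hence $\var X \simeq \P^1_k$, and strongly $F$-regular log Fano pairs with standard coefficients on $\P^1_k$ are quasi-$F$-split by a direct computation (this amounts to checking that the Fedder criterion of \cref{Fedder} produces a splitting at height one for such pairs). In case (3), invoke the log analogue of \cite{Hara2}*{Theorem~1.1}: in characteristic $p > 41$, every strongly $F$-regular log del Pezzo pair with standard coefficients on a Picard rank one $\Q$-factorial surface is quasi-$F$-split.

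In the remaining case~(1), the difficulty is that $\var S := S/pS$ is not quasi-Gorenstein when $\var\Delta\neq 0$, so \cref{Fano} (and therefore \cref{log-fano}) does not apply directly. I would handle this by passing to the index-one cover associated to the standard boundary $\Delta$. Since $\Delta = \sum(1-1/m_i)D_i$, one constructs a finite cover $\pi\colon \widetilde X\to X$ by successively adjoining $m_i$-th roots of local generators of the $D_i$; the Picard rank one and $\Q$-factoriality hypotheses ensure that this construction is globally defined, and the tameness hypothesis (contained in strong $F$-regularity) makes it unramified in codimension zero of $\Spec W(k)$. By construction $K_{\widetilde X}=\pi^*(K_X+\Delta)$, so $-K_{\widetilde X}$ is ample $\Q$-Cartier, and the standard pair-to-cover correspondence transfers strong $F$-regularity and quasi-$F$-splitting of $(\var X,\var\Delta)$ to $\widetilde{\var X}$. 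Hence the section ring
\[
\widetilde S := \bigoplus_{m\geq 0} H^0(\widetilde X,\cO_{\widetilde X}(-mK_{\widetilde X}))
\]
satisfies the hypotheses of \cref{log-fano}, so $\widetilde S_{(p,\widetilde S_{>0})}$ is perfectoid BCM-regular.

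Finally, I would descend perfectoid BCM-regularity from $\widetilde S$ to $S$. The inclusion $S\hookrightarrow \widetilde S$ is a finite tame cover whose trace map splits, so $S\to \widetilde S$ is pure. Given any perfectoid BCM algebra $B$ over $S$, Andr\'e's flatness lemma produces a perfectoid BCM algebra $B'$ over $\widetilde S$ equipped with a $p$-completely faithfully flat map $B\to B'$; purity of $\widetilde S\to B'$ together with purity of $S\to\widetilde S$ then yields purity of $S\to B$, as required.

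The main obstacle will be carrying out the index-one cover construction in mixed characteristic and certifying that $\widetilde S/p\widetilde S$ is the section ring of $\widetilde{\var X}$ with respect to $-K_{\widetilde{\var X}}$ — this requires both the flatness of $\widetilde X\to\Spec W(k)$ and the vanishing $H^1(\widetilde X,\cO_{\widetilde X}(-mK_{\widetilde X}))=0$, obtained via the quasi-$F$-splitting of $\widetilde{\var X}$ and \cite{KTTWYY1}*{Theorem~3.15}, as in the proof of \cref{log-fano}.
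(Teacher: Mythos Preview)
Your central premise—that ``$\var S := S/pS$ is not quasi-Gorenstein when $\var\Delta\neq 0$, so \cref{Fano} does not apply directly''—is incorrect, and this is exactly where the paper's proof diverges from yours. By the Demazure--Watanabe theory of normal graded rings, the section ring $R(\var X, D)$ with $D = -(K_{\var X}+\var\Delta)$ is quasi-Gorenstein if and only if $K_{\var X} + D' \sim aD$ for some integer $a$, where $D'$ is the standard boundary built from the denominators of $D$. Because $\var\Delta$ has standard coefficients, one computes $D' = \var\Delta$, and then $a = -1$ works tautologically: $K_{\var X}+\var\Delta = -D$. Hence $\var S$ \emph{is} quasi-Gorenstein with $a(\var S)=-1<0$, and the paper simply applies \cref{Fano} to $S$ itself, after verifying (a) base-change of the reflexive sheaves, (b) an $H^1$-vanishing (via Picard rank one and \cite{KTTWYY1}*{Theorem~5.1}), and (c) the $a$-invariant inequality $\lceil m\var\Delta\rceil \leq (m+1)\var\Delta$, which is an elementary calculation with standard coefficients. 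The reduction of cases (2) and (3) to (1) is handled by citing \cite{KTTWYY1}*{Corollary~5.16} and \cite{KTTWYY2}*{Theorem~C}.

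Your index-one cover route might be salvageable, but as written it has real gaps beyond the one you flag. The global cover $\widetilde X \to X$ branched to order $m_i$ along each $D_i$ need not exist: Picard rank one over $\var X$ does not furnish the torsion line bundles required to build cyclic covers, and even if it did on $\var X$ you would need to lift them to $W(k)$. Your descent step is also not what Andr\'e's lemma provides: given a perfectoid BCM $B$ over $S$, Andr\'e gives a faithfully flat perfectoid extension, but not one that is automatically an $\widetilde S$-algebra, let alone BCM over $\widetilde S$; you would instead need to enlarge $B$ to a perfectoid BCM $\widetilde S$-algebra (e.g.\ via \cite{bcm-reg} or Bhatt's absolute integral closure results) and then use that purity of $R\to B'$ through $B$ forces purity of $R\to B$. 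All of this is avoidable once you recognize that $\var S$ is already quasi-Gorenstein.
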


\begin{proof}
By \cite{KTTWYY1}*{Corollary~5.16} and \cite{KTTWYY2}*{Theorem~C}, we have $(\var{X},\var{\Delta})$ is quasi-$F$-split.
By the same argument as the proof of \cref{log-fano}, it is enough to show that for every integer $m \geq 0$,
\begin{enumerate}
    \item[(a)] $\cO_{X}(-m(K_{X}+\Delta)) \otimes_{\cO_X} \cO_{\var{X}} \simeq \cO_{\var{X}}(-m(K_{\var{X}}+\var{\Delta}))$,
    \item[(b)] $H^1(\var{X},\cO_{\var{X}}(-m(K_{\var{X}}+\var{\Delta})))=0$, and
    \item[(c)] $H^{d-1}(\var{X},\cO_{\var{X}}(-m(K_{\var{X}}+\var{\Delta})))=0$, where $d:=\dim{X}$.
\end{enumerate}
Indeed, the cone of $(\var{X},\var{\Delta})$ coincides with $\var{S}$ by (a) and (b), the $a$-invariant $a(\var{S})$ is negative by (c).
By an argument in the proof of \cref{log-fano}, we have that $X$ is divisorially Cohen-Macaulay, that is, for all divisorial sheaves on $X$ are Cohen-Macaulay.
We set $\Delta:=\sum a_i E_i$, where each $E_i$ is a prime divisor and $a_i$ has a form $a_i=(m_i-1)/m_i$ for some integer $m_i \geq 2$.
Since $(\var{X},\var{\Delta})$ is strongly $F$-regular, we have $\rdown{\var{\Delta}}=0$, and in particular, we have $E_i|_{\var{X}}$ is a reduced divisor.
Therefore, we obtain that $\cO_{X}(-m(K_X+\Delta)) \otimes_{\cO_X} \cO_{\var{X}} \simeq \cO_{\var{X}}(-m(K_{\var{X}}+\var{\Delta}))$ on codimension one points of $\var{X}$.
Since $X$ is divisorially Cohen-Macaulay, we obtain  condition (a).
Next, we prove the holding condition (b).
We take an integer $m \geq 0$ and set $L:=\rdown{-m(K_X+\Delta)}$, which is $\Q$-Cartier since $\var{X}$ is $\Q$-factorial.
Since the Picard rank of $\var{X}$ is one and $-K_X$ is ample, we obtain that $L-K_X$ or $-L$ is ample.
By \cite{KTTWYY1}*{Theorem~5.1}, we obtain  condition (b) if $\dim{\var{X}} \geq 2$.
Therefore, it is enough to show  condition (c).
By the Serre duality, we obtain
\[
H^{d-1}(\var{X},\cO_{\var{X}}(-m(K_{\var{X}}+\var{\Delta}))) \simeq H^0(\var{X},\cO_{\var{X}}((m+1)K_{\var{X}}+\rup{m\var{\Delta}})).
\]
Since $-(K_{\var{X}}+\var{\Delta})$ is ample, it is enough to show that $\rup{m\var{\Delta}} \leq (m+1)\var{\Delta}$ for every integer $m \geq 0$.
We take a prime divisor $E$ on $\var{X}$ and a positive integer $n$ with $\frac{n-1}{n}=\mathrm{ord}_E(\var{\Delta})$.
We take integers $q \geq 0$ and $n > r \geq 0$ such that $m=nq+r$.
Then we have 
\[
\rup{m\frac{n-1}{n}}=m-\rdown{\frac{m}{n}}=m-q.
\]
On the other hand, we have
\[
(m+1)\frac{n-1}{n}=m+1-q-\frac{1}{n},
\]
thus we obtain $(m+1)\frac{n-1}{n} \geq \rup{m\frac{n-1}{n}}$, as desired.
\end{proof}

\begin{corollary}\label{example-new}\textup{(\cref{intro:example-new})}
There exists a Noetherian graded ring $S=\bigoplus_{i \in \Z_{\geq 0}}S_i$ such that $S$ satisfies the following properties.
\begin{enumerate}
    \item $S$ is torsion free and $S_0$ is a divisorially valuation ring with maximal ideal $(p)$.
    \item We set $\m:=(S_{>0},p)$ and $R:=S_\m$,
    then $R$ is perfectoid BCM-regular.
    \item $R/pR$ is normal Gorenstein but not strongly $F$-regular.
\end{enumerate}
\end{corollary}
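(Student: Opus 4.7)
The plan is to homogenize the mixed-characteristic hypersurface singularity of \cref{ex:fedder}(5) (equivalently \cref{ex:threshold}(4)) and then deduce the three required properties from \cref{Fano} and \cref{compsre-posi-case}.

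I would fix $p = 2$ and an integer $n \geq 2$, and set
\[
S := \Z_{(2)}[x,y,z]/(z^2 + x^2 y + x y^n), \qquad \m := (p, S_{>0}), \qquad R := S_\m,
\]
graded by $\deg x := 2(n-1)$, $\deg y := 2$, $\deg z := 2n-1$, so that $f := z^2 + x^2 y + x y^n$ is homogeneous of degree $d := 4n - 2$. Property (1) is immediate: $S$ is $p$-torsion free, and since every variable has strictly positive degree, $S_0 = \Z_{(2)}$ is a DVR with maximal ideal $(p)$.

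For property (2) I would apply \cref{Fano} to $S$. The reduction $\var{S} := S/pS = \F_2[x,y,z]/(f)$ is a Gorenstein hypersurface. In characteristic $2$ the partials are $\partial_x f = y^n$, $\partial_y f = x^2 + n x y^{n-1}$, $\partial_z f = 0$; vanishing of $\partial_x f$ forces $y=0$, then $\partial_y f = x^2$ forces $x=0$, and $f = z^2$ then forces $z=0$, so the singular locus of $\var{S}$ is the origin. Hence $\var{S}$ is normal and $\Spec \var{S} \setminus \{\m\var{S}\}$ is regular, a fortiori strongly $F$-regular. Moreover,
\[
a(\var{S}) = d - \deg x - \deg y - \deg z = (4n-2) - 2(n-1) - 2 - (2n-1) = -1 < 0.
\]
By \cref{qfs:comp}, quasi-$F$-splitness of $R$ is equivalent to that of its completion $\widehat R \simeq \Z_{(2)}[[x,y,z]]/(f)$, which by \cref{ex:fedder}(5) is quasi-$F$-split of height $\rup{\log_2 n} + 1$. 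Every hypothesis of \cref{Fano} is therefore in place, so $R$ is perfectoid BCM-regular.

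For property (3), $R/pR$ is normal Gorenstein by the analysis just given. To see that strong $F$-regularity fails I would invoke \cref{compsre-posi-case}: one has $\sht(R/pR) \geq \sht(R) = \rup{\log_2 n} + 1 \geq 2$, so $R/pR$ is not $1$-quasi-$F$-split, hence not $F$-pure, and a fortiori not strongly $F$-regular. The only (mild) obstacle is bookkeeping—arranging the weights so that $S_0 = \Z_{(2)}$, and carrying out the Jacobian computation establishing normality of $\var{S}$; all substantive input is already contained in \cref{ex:fedder}(5), \cref{Fano}, \cref{qfs:comp}, and \cref{compsre-posi-case}.
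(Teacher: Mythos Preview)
Your proof is correct but takes a genuinely different route from the paper's own argument.

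The paper proceeds through the geometric machinery of \cref{log-fano} and \cref{log-fano-pair}: for $p\in\{2,3,5\}$ it applies \cref{log-fano-pair} to the pair $(\P^1_{W(k)},\tfrac12 E_1+\tfrac23 E_2+\tfrac45 E_3)$, identifying the resulting section ring as $W(k)[x,y,z]/(z^2+x^3+y^5)$, while for $p\geq 7$ it invokes the existence and $W(k)$-liftability of non-globally-$F$-regular klt del Pezzo surfaces (via \cite{CTW18}, \cite{ABL}, \cite{BBKW}) and then \cref{log-fano}. Your approach bypasses all of this: you take a $D_n$-type hypersurface over $\Z_{(2)}$ with a hand-chosen weighted grading and feed it directly into \cref{Fano}, verifying normality, the $a$-invariant, and quasi-$F$-splitness by elementary computations and \cref{ex:fedder}(5). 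This is shorter and entirely self-contained (no projective geometry, no cone constructions, no lifting results from the literature). The trade-off is scope: the paper exhibits examples for every prime, whereas your argument as written handles only $p=2$; but since the statement only asks for existence, this is enough.

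One minor inaccuracy worth fixing: the $\m$-adic completion of $R$ is $\Z_2[[x,y,z]]/(f)$, not $\Z_{(2)}[[x,y,z]]/(f)$. This does not affect the conclusion, since a second application of \cref{qfs:comp} shows both have the same quasi-$F$-splitting height (and in any case the Fedder-type criterion in \cref{Fedder} depends only on data modulo $p$).
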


\begin{proof}
We set $k:=\overline{\F}_p$.
If $p=2,3$ or $5$, then we consider the pair 
\[
(\P^1_{W(k)},\Delta:=\frac{1}{2}E_1+\frac{2}{3}E_2+\frac{4}{5}E_3),
\]
where $E_1,E_2,E_3$ are disjoint sections of $\P^1_{W(k)} \to \Spec{W(k)}$.
Then it satisfies the assumptions of \cref{log-fano-pair}, thus the localization of 
\[
S:=\bigoplus_{m \geq 0} H^0(X,\cO_X(-mK_X))
\]
is perfectoid BCM-regular.
On the other hand, we see that $S=W(k)[x,y,z]/(z^2+x^3+y^5)$, thus $S/pS$ is not strongly $F$-regular by \cite{Hara2}*{Theorem~1.1}.
Therefore, the ring $S$ satisfies the desired conditions.
Next, we assume $p \geq 7$.
By \cite{CTW18}*{Theorem~1.1}, there exists a projective klt surface $\var{X}$ such that $-K_{\var{X}}$ is ample and $\var{X}$ is not globally $F$-regular.
By \cite{CTW18}*{Theorem~1.1} and \cite{ABL}*{Proposition~2.5}, a log resolution of $\var{X}$ lifts to $W(k)$.
By \cite{BBKW}*{Proposition~6.2}, there exists a flat normal integral projective scheme $X$ over $\Spec{W(k)}$ such that $X \times_{\Spec{W(k)}} \Spec{k} \simeq \var{X}$ and $K_X$ is $\Q$-Cartier.
Since $-K_{\var{X}}$ is ample, so is $-K_X$.
By \cref{log-fano},  the localization $R$ of
\[
S:=\bigoplus_{m \geq 0} H^0(X,\cO_X(-mK_X))
\]
is perfectoid BCM-regular.
Furthermore, by the proof of \cref{log-fano}, $S/pS$ is cone of $\var{X}$, thus $S/pS$ is not strongly $F$-regular by \cite{smith}*{Theorem~3.10}.
Thus, $R$ is perfectoid BCM-regular, but $R/pR$ is not strongly $F$-regular.
Moreover, the ring $S_0$ is the divisorially valuation ring with maximal ideal $(p)$, thus $S$ satisfies the desired conditions.
\end{proof}

\subsection{Case of $a(\var{S}) \geq 0$}
In this subsection, we study the case where $a(\var{S}) \geq 0$.

\begin{proposition}\label{qfs-to-non-general}
We use the notation introduced in \cref{notation:graded}.
If $R$ is quasi-$F$-split, then we have $a(R)\leq 0$.
\end{proposition}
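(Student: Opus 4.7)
The plan is to prove the contrapositive by a contradiction argument that tracks degrees through the program of Definition \ref{F-W-program}. Assume $a(\var{S}) \geq 1$, so there is a non-zero homogeneous element $\eta \in H^d_\m(\var{R}) \simeq H^d_\m(\var{S})$ of strictly positive degree $\nu > 0$. Since $R$ is quasi-$F$-split, Case (ii-ii) is ruled out by Remark \ref{finish-program}(3). The goal is to show that the degree-tracked execution of the program forces the existence of nonzero homogeneous elements in $H^d_\m(\var{S})$ of arbitrarily large degree, contradicting the standard fact that $H^d_\m(\var{S})$ is bounded above in degree (which holds because $\var{S}_0$ is a field and $\var{S}$ is a finitely generated graded $\var{S}_0$-algebra, so $a(\var{S}) < \infty$).

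First, invoke Proposition \ref{program-degree} to run the program starting at $\eta_0 := \eta$ so that every resulting $\eta_i$ is homogeneous, with $\deg(\eta_i) = p^{e_i}\nu$ for some strictly increasing sequence $0 = e_0 < e_1 < e_2 < \cdots$. At each step that the program continues past $\eta_i$, we are either in Case (i-i), where $\eta_{i+1} = F^e(\eta_i)$ for some $e \geq 1$ so $e_{i+1} = e_i + e$, or in Case (ii-i), where $V^{n-1}(F^n_*\eta_{i+1}) = \Phi_{R,n}(\eta_i)$ for some $n \geq 2$ so $e_{i+1} = e_i + n$. In both cases the degree strictly increases, and since $\nu > 0$, all $\deg(\eta_i)$ are positive and unbounded.

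If the program never terminates, the sequence $\{\eta_i\}_{i \geq 0}$ already gives nonzero homogeneous elements of $H^d_\m(\var{R})$ in unboundedly large positive degrees, the desired contradiction. Otherwise, the program terminates at some $\eta_i$ which must be in Case (i-ii) (Case (ii-ii) being excluded). Then $F^e(\eta_i) \neq 0$ for all $e \geq 1$, and each $F^e(\eta_i)$ is homogeneous of degree $p^{e+e_i}\nu$, again producing nonzero homogeneous elements in $H^d_\m(\var{R})$ of arbitrarily large positive degrees. Either outcome contradicts $a(\var{S}) < \infty$, so we conclude $a(\var{S}) \leq 0$.

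The main obstacle, and really the only subtlety, is verifying that the degree computations in Proposition \ref{program-degree} remain valid under the graded structure on $Q_{S,n}$ given by $\var{W}_n(S)^{(p)}$; once this is accepted (as it already is in \ref{program-degree}), the argument is immediate from the unboundedness of positive degrees combined with the ruling out of Case (ii-ii). No further computation with Witt vectors or with the explicit form of $\Phi_{R,n}$ is needed.
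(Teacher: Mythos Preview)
Your proposal is correct and follows essentially the same route as the paper's proof: both run the degree-tracked program of Proposition~\ref{program-degree} on a homogeneous element of positive degree, use quasi-$F$-splitness to exclude Case~(ii-ii), and derive a contradiction from the upper-boundedness of the graded pieces of $H^d_\m(\var{S})$. The only cosmetic difference is that the paper begins with a homogeneous element $\wt{\eta}\in H^{d+1}_\m(R)$ of positive degree (interpreting $a(R)$ via the top cohomology of $S$), replaces it by a $p$-torsion element, and then descends to $\eta\in H^d_\m(\var{R})$ via the connecting map, whereas you start directly in $H^d_\m(\var{R})$; since $a(\var{S})\le 0$ forces $a(S)\le 0$ (as $p$ acts nilpotently on $H^{d+1}_\m(S)$), your version suffices.
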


\begin{proof}
Suppose $a(R) >0$ and $R$ is quasi-$F$-split, then there exists a homogeneous non-zero element $\wt{\eta} \in H^{d+1}_\m(R)$ of positive degree.
Replacing $\wt{\eta}$ by $p^s\wt{\eta}$ for some $s \geq 0$, we may assume $\wt{\eta}$ is $p$-torsion free.
Then there exists a homogeneous element $\eta \in H^d_\m(\var{R})$ of positive degree such that $\wt{\eta}=\eta/p$.
We take a sequence $\{\eta_i\}_{i \geq 0}$ as in \cref{program-degree}, then the degrees of $\eta_i$ are strictly increasing, thus it contradicts $\eta_i \neq 0$ for every $i \geq 0$, as desired.
\end{proof}

\begin{proposition}\label{CY-case}
We use the notation introduced in \cref{notation:graded}. 
Assume that $R$ is Gorenstein and that $a(\var{S}) = 0$.
Then $\sht(R) = \sht(\var{R})$.
\end{proposition}

\begin{proof}
By \cref{compsre-posi-case}, we have $\sht(R) \leq \sht(\var{R})$.  
Let $n := \sht(R)$. Then it suffices to show that $\var{R}$ is $n$-quasi-$F$-split.  
If $n = 1$, the assertion follows immediately from the isomorphisms $Q_{R,1} \simeq Q_{\var{R},1} \simeq F_*\var{R}$.  
Hence, we may assume $n \geq 2$.

Let $\eta \in H^d_\m(\var{R})$ be a homogeneous nonzero element of degree zero.  
Since $a(\var{S}) = 0$, the socle of $H^d_\m(\var{R})$ is concentrated in degree zero, and thus $\eta$ lies in the socle.

We consider the following commutative diagram:
\[
\begin{tikzcd}
    H^d_\m(\var{R}) \arrow[r,"\Phi_{R,n}"] \arrow[rd,"\Phi_{\var{R},n}"'] & H^d_\m(Q_{R,n}) \arrow[d] & \arrow[l,"V^{n-1}"'] H^d_\m(F^n_*\var{R}) \arrow[d,"\alpha"] \\
    & H^d_\m(Q_{\var{R},n}) & \arrow[l,"V^{n-1}"'] H^d_\m(F^{n-1}_*B_{\var{R}}) 
\end{tikzcd}
\]

Since $\Phi_{R,n}(\eta) \neq 0$ and $\Phi_{R,n-1}(\eta) = 0$, and $R$ is Cohen--Macaulay, there exists a unique $\eta' \in H^d_\m(\var{R})$ such that $\Phi_{R,n}(\eta) = V^{n-1}(\eta')$.  
By \cref{program-degree}, the degree of $\eta'$ is zero.

Moreover, since $\var{R}$ is not $F$-pure, the map $H^d_\m(\var{R})_0 \to H^d_\m(F_*\var{R})_0$ is the zero map.  
In particular, $\eta'$ is not contained in the image of Frobenius.  
Thus, under the exact sequence
\[
0 \to F^{n-1}_*\var{R} \xrightarrow{F^{n-1}_*F} F^n_*\var{R} \to F^{n-1}_*B_{\var{R}} \to 0,
\]
the element $\eta'$ does not map to zero under the induced map $\alpha$ on local cohomology.  
Hence, to prove that $\eta'$ survives in $H^d_\m(Q_{\var{R},n})$, it suffices to show that the map
\[
V^{n-1}_0 \colon H^d_\m(F^{n-1}_*B_{\var{R}})_0 \to H^d_\m(Q_{\var{R},n})_0
\]
is injective.

We now prove that $F_0 \colon H^d_\m(W_m(\var{R}))_0 \to F_*H^d_\m(W_m(\var{R}))_0$ is the zero map for every $1 \leq m \leq n-1$ by induction on $m$.

For the base case $m=1$, the assertion follows from the fact that $\var{R}$ is not $F$-pure.  
Assume the claim holds for $m-1$ with $m \leq n-1$.

Since $\var{R}$ is not $(n-1)$-quasi-$F$-split, the composition
\[
H^d_\m(W_m(\var{R}))_0 \xrightarrow{F} F_*H^d_\m(W_m(\var{R}))_0 \to H^d_\m(Q_{\var{R},m})_0
\]
is zero.

From the short exact sequence
\[
0 \to F_*W_{m-1}(\var{R}) \xrightarrow{VF} F_*W_m(\var{R}) \to Q \to 0,
\]
and using the induction hypothesis that $F_0$ on $H^d_\m(W_{m-1}(\var{R}))_0$ is zero, we find that the map $F_*H^d_\m(W_m(\var{R}))_0 \to H^d_\m(Q_{\var{R},m})_0$ is injective.  
Thus, $F_0$ on $H^d_\m(W_m(\var{R}))_0$ must be zero.

Next, consider the following commutative diagram, where each row is a short exact sequence:
\[
\begin{tikzcd}
0 \arrow[r] & H^d_\m(F^{n-2}_*\var{R})_0 \arrow[r, "V^{n-2}"] \arrow[d, "\cong"] & H^d_\m(W_{n-1}(\var{R}))_0 \arrow[r] \arrow[d, "\cong"] & H^d_\m(W_{n-2}(\var{R}))_0 \arrow[r] \arrow[d, "\cong"] & 0 \\
 & H^d_\m(F^{n-2}_*B_{\var{R}})_0 \arrow[r, "V^{n-2}"] & H^d_\m(B_{\var{R},n-1})_0 \arrow[r] & H^d_\m(B_{\var{R},n-2})_0 \arrow[r] & 0.
\end{tikzcd}
\]

By the above, each vertical map is an isomorphism.  
Therefore, we obtain the injection:
\[
H^d_\m(F^{n-2}_*B_{\var{R}})_0 \hookrightarrow H^d_\m(B_{\var{R},n-1})_0.
\]

Furthermore, from the exact sequence
\[
0 \to B_{\var{R},n-1} \xrightarrow{V} Q_{\var{R},n} \to \var{R} \to 0,
\]
and the fact that $\var{R}$ is Cohen--Macaulay, we deduce that the map
\[
H^d_\m(B_{\var{R},n-1}) \to H^d_\m(Q_{\var{R},n})
\]
is injective.

Combining the above, we obtain the desired injectivity:
\[
V^{n-1}_0 \colon H^d_\m(F^{n-1}_*B_{\var{R}})_0 \hookrightarrow H^d_\m(Q_{\var{R},n})_0.
\]
This completes the proof.
\end{proof}

\begin{theorem}\label{CY-case-qfs}\textup{(\cref{intro:CY-case})}
We use Notation \cref{notation:graded}.
We assume $R$ is complete intersection and $a(\var{S})=0$.
If $\sht(\var{R})=n < \infty$, then $R$ is perfectoid pure and we have
\[
\ppt(R;\div(p))=1-\frac{p+\cdots+p^{n-1}}{p^n-1}.
\]
\end{theorem}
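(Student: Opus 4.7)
My plan is to reduce to the completion, use \cref{CY-case}(1) together with \cref{thm:qfs-to-p-pure-comp} and \cref{equiv-qfs-p-pure} to get the lower bound $\ppt(R;\div(p)) \ge \varepsilon_0 := 1 - \frac{p+\cdots+p^{n-1}}{p^n-1}$ for free, and then obtain the matching upper bound by constructing a sequence of elements of $H^d_\m(\var R)$ that all lie on a single line in the socle. Precisely, \cref{CY-case}(1) gives $\sht(R) = \sht(\var R) = n$, \cref{thm:qfs-to-p-pure-comp} gives perfectoid purity, and \cref{equiv-qfs-p-pure} supplies the lower bound. By \cref{qfs:comp} and \cite{p-pure}*{Lemma~4.13} we may assume $R$ is $\m$-adically complete and fix a $p$-torsion free test perfectoid $R_\infty = R^{S_\infty}_{perfd}$ coming from \cref{test-perf-tests}(3). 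The case $n=1$ is the $F$-pure case handled by \cref{remk:ppt}, so we assume $n \ge 2$.

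For the matching upper bound the key input is that $\var R$ is Gorenstein (as $R$ is complete intersection and $p$-torsion free) with $a(\var S) = 0$, so the socle of $H^d_\m(\var R)$ is one-dimensional over $k := \var R / \m \var R$ and coincides with the degree-zero piece $H^d_\m(\var R)_0$. Fix a generator $\eta$. Because $R$ is not $(n-1)$-quasi-$F$-split, the standard Matlis-dual criterion (using Gorenstein-ness) forces $\Phi_{R,n-1}$ to kill the socle while $\Phi_{R,n}(\eta) \ne 0$. By \cref{exact-Q} and \cref{CM-Q} the map $V^{n-1}$ is injective on $H^d_\m$, so there exists a unique $\eta_1 \in H^d_\m(\var R)$ with $V^{n-1}(F^n_* \eta_1) = \Phi_{R,n}(\eta)$. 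A degree computation in the spirit of \cref{program-degree} shows $\eta_1$ is homogeneous of degree $0$, hence lies in $H^d_\m(\var R)_0 = k \cdot \eta$; thus $\eta_1 = c_1 \eta$ for some $c_1 \in k^\times$. Applying the same recipe to $\eta_1$ produces $\eta_2 = c_2 \eta$, and iterating yields a sequence $\eta_i = c_i \eta$ with $c_i \in k^\times$ satisfying $V^{n-1}(F^n_* \eta_{i+1}) = \Phi_{R,n}(\eta_i)$ for all $i \ge 0$. I expect this collapse onto a single line to be the main obstacle, and it is precisely where both the graded structure (to get $\deg \eta_i = 0$) and the Gorenstein one-dimensionality of $H^d_\m(\var R)_0$ are used.

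Finally, writing $\eta_i' \in H^d_\m(\var R_\infty)$ for the image of $\eta_i$ and setting $\alpha_i := \sup\{\alpha \in \Z[1/p]_{\ge 0} \mid p^\alpha \eta_i' \ne 0\}$ (using the compatible $p$-power roots of $p$ in $R_\infty$), each $c_i$ lifts to a unit $\tilde c_i \in \var R$ (since $c_i \notin \m\var R$), whose image in $\var R_\infty$ is still a unit; hence $\eta_i' = \tilde c_i \eta_0'$ and $\alpha_i = \alpha_0$ for every $i \ge 0$. The recursion $\alpha_i = (1 - i_n) + \alpha_{i+1}/p^n$ extracted from the proof of \cref{equiv-qfs-p-pure} (with $i_n = 1/p + \cdots + 1/p^n$) together with $\alpha_{i+1} = \alpha_0$ then forces
\[
\alpha_0 = (1 - i_n) + \alpha_0/p^n, \qquad \text{so} \qquad \alpha_0 = \frac{p^n(1 - i_n)}{p^n - 1} = \varepsilon_0.
\]
By \cref{test-perf-tests} this identifies $\alpha_0$ with $\ppt(R;\div(p))$, completing the proof.
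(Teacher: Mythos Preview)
Your proof is correct and rests on the same core observation as the paper's: because $\var R$ is Gorenstein with $a(\var S)=0$, the degree-zero piece $H^d_\m(\var R)_0$ is the one-dimensional socle, so the element $\eta_1$ produced by the program is a unit multiple of $\eta$, which forces a self-referential relation for the threshold.

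The organization differs in two minor but pleasant ways. First, you quote \cref{equiv-qfs-p-pure} for the lower bound and then invoke the recursion $\alpha_i=(1-i_n)+\alpha_{i+1}/p^n$ already established there, whereas the paper re-derives the relation by passing to the tilt $R_\infty^\flat$, obtaining $T^{1-i_n}\tau^\flat=w'F^{-n}(\tau^\flat)$ via the equation from the proof of \cref{ht-to-p-pure}. Second, you iterate in $H^d_\m(\var R)$ to produce the whole sequence $\eta_i=c_i\eta$ and use the constancy $\alpha_i=\alpha_0$ to solve the recursion as a genuine fixed point $\alpha_0=(1-i_n)+\alpha_0/p^n$, while the paper keeps only the single relation on the tilt and argues both inequalities by contradiction using the same linear recursion $\alpha_{m+1}-\varepsilon_0=p^n(\alpha_m-\varepsilon_0)$. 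Your route is slightly more elementary in that it never touches the tilt; the paper's route isolates the mechanism in one clean equation on $R_\infty^\flat$. The appeal to \cref{equiv-qfs-p-pure} for the lower bound is in fact redundant, since your fixed-point computation already yields the exact value.
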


\begin{proof}
By \cref{CY-case}, we obtain $\sht(R)=n$.
By \cref{thm:qfs-to-p-pure-comp}, the ring $R$ is perfectoid pure.
Let $\eta \in H^d_\m(\var{R})$ be a non-zero element of the socle of $H^d_\m(\var{R})$, then $\deg(\eta)=0$.
Using a procedure as in \cref{program-degree}, we construct a sequence $\{\eta_i\}_{i \geq 0}$ with $\eta_0=\eta$ and $\{n_i\}_{i \geq 0}$, then the degree of $\eta_i$ is zero for every $i \geq 0$, and in particular, each $\eta_i$ is an element of the socle of $H^d_\m(\var{R})$.
Since $\sht(R)=n$, we have $n_i=n$ for every $i \geq 0$.
By \cref{const-seq}, we have 
\[
\ppt(R;\div(p))=1-\frac{p+\cdots+p^{n-1}}{p^n-1},
\]
as desired.
\end{proof}

\begin{proposition}\label{fermat-ell}
Let $R:=\mathbb{Z}_{p}[[x,y,z]]/(x^3+y^3+z^3)$ and $p \equiv 2 \mod 3$.
\begin{enumerate}
    \item The ring $R$ is perfectoid pure and $\ppt(R;\div(p))=1-p/(p^2-1)$. 
    \item The ring $R \otimes_{\Z_{p}} R$ is not perfectoid pure.
\end{enumerate}
\end{proposition}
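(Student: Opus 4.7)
For \textbf{part (1)}, apply \cref{CY-case-qfs}. Since $R'$ is a hypersurface it is complete intersection, and $\var{S} := \F_p[x,y,z]/(x^3+y^3+z^3)$ has $a$-invariant $3-3 = 0$. It remains to show $\sht(\var{R'}) = 2$: by Fedder's criterion (\cref{Fedder}), $f^{p-1} \in \m^{[p]}$ because $p \equiv 2 \pmod{3}$ makes $(p-1)/3 \notin \Z$, so a monomial $x^{3a}y^{3b}z^{3c}$ in $f^{p-1}$ with $a+b+c = p-1$ and $a,b,c \leq (p-2)/3$ would force $a+b+c \leq p-2$, a contradiction; hence $\sht(\var{R'}) \geq 2$. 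A direct calculation on $I_2 = u(F_*(\Delta_1(f^{p-1})f^{p-1})) + (f^{p-1})$ using $\Delta_1(f^{p-1}) \equiv -f^{p(p-2)}\Delta_1(f) \pmod{p}$ exhibits a monomial outside $\m^{[p]}$, giving $\sht(\var{R'}) = 2$ (equivalently, the supersingular Fermat elliptic curve has Artin--Mazur height $2$). Plugging $n = 2$ into \cref{CY-case-qfs} yields $\ppt(R';\div(p)) = 1 - p/(p^2-1)$.

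For \textbf{part (2)}, set $T := R' \,\widehat{\otimes}_{\Z_p}\, R' \cong \Z_p[[x,y,z,x',y',z']]/(f,f')$ with $f' = x'^3+y'^3+z'^3$. By \cref{ex:non-qfs}, $T$ is not quasi-$F$-split: the computation $\Delta_1((ff')^{p-1}) \equiv f^{p(p-1)}\Delta_1(f'^{p-1}) + f'^{p(p-1)}\Delta_1(f^{p-1}) \in \m^{[p^2]} \pmod{p}$ feeds Fedder iteration to give $I_n \subseteq \m^{[p]}$ for every $n$. To deduce non-perfectoid purity, take a socle generator $\eta \in H^3_\m(\var{T})$ of the $1$-dimensional (Gorenstein) socle. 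Since $T$ is not quasi-$F$-split, $\Phi_{T,n}(\eta) = 0$ for every $n$. For any $p$-torsion free test perfectoid $T_\infty$ (provided by \cref{thm:const-test-perfd}), naturality gives $\Phi_{T_\infty,n}(\eta') = 0$; composing with $\var{\psi}_n \colon Q_{T_\infty,n} \to T_\infty/(p^{i_n})$ of \cref{thm:purity-perfd} makes $\eta'$ die in $H^3_\m(T_\infty/(p^{i_n}))$ for every $n$. The connecting map from $0 \to T_\infty \xrightarrow{\cdot p^{i_n}} T_\infty \to T_\infty/(p^{i_n}) \to 0$ (as in the proof of \cref{p-pure-to-qfs}) then forces $p^{1-i_n}\tilde{\eta}' = 0$ in $H^4_\m(T_\infty)$ for every $n$, where $\tilde{\eta}' \in H^4_\m(T_\infty)$ is the image of a socle generator $\tilde{\eta} \in H^4_\m(T)$.

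The main obstacle will be to upgrade these $p$-almost vanishings to $\tilde{\eta}' = 0$. For $p = 2$ the exponents $1 - i_n = 2^{-n}$ tend to $0$, so $\tilde{\eta}'$ is annihilated by the full system of $p$-power roots of $p$; combined with the cohomological Cohen--Macaulayness of the specific test perfectoid from \cite{p-pure}*{Lemma~4.23} (via \cite{p-pure}*{Theorem~4.24}) and the $p$-torsion on $\tilde{\eta}'$, this forces $\tilde{\eta}' = 0$. For general $p \equiv 2 \pmod{3}$ the direct bound $p^{1-1/(p-1)}\tilde{\eta}' = 0$ is too weak, and the plan is to sharpen it by iterating the product rule $\Delta_1(gh) \equiv g^p\Delta_1(h) + h^p\Delta_1(g) \pmod{p}$ together with the binomial identity $\Delta_1(g^m) \equiv m g^{p(m-1)}\Delta_1(g) \pmod{p}$ to control $\Phi_{T,n}$ on a further sequence of elements produced by the program \cref{F-W-program}, ultimately yielding $p^\alpha \tilde{\eta}' = 0$ with $\alpha$ arbitrarily small. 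Once $\tilde{\eta}' = 0$, Gorensteinness of $T$ implies $T \to T_\infty$ is not pure on $H^4_\m$, so $T = R' \otimes_{\Z_p} R'$ is not perfectoid pure by \cref{test-perf-tests}(1).
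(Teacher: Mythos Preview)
Your argument for part (1) is correct and matches the paper's.

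For part (2) there is a genuine gap. From \cref{ex:non-qfs} and \cref{p-pure-to-qfs} you correctly deduce $p^{1-i_n}\tilde\eta'=0$ in $H^5_\n(T_\infty)$ for every $n$; equivalently $\ppt(R';\div(p))\le(p-2)/(p-1)$. For $p=2$ this gives $\ppt\le 0$, but your claim that ``$(p^{1/p^\infty})$-torsion together with cohomological Cohen--Macaulayness and $p$-torsion forces $\tilde\eta'=0$'' is not justified: almost-zero elements in the top local cohomology of a cohomologically Cohen--Macaulay perfectoid need not vanish, and nothing in your argument excludes the possibility that $R'$ is perfectoid pure with $\ppt(R';\div(p))=0$. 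For $p\ge 5$ the bound $(p-2)/(p-1)$ is bounded away from $0$, and iterating $\Delta_1$-identities cannot improve it, since those identities are exactly what goes into \cref{p-pure-to-qfs} in the first place.

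What is missing is a \emph{lower} bound on $\ppt(R';\div(p))$ to contradict the upper one, and this is where the paper's approach diverges from yours. The paper exploits the tensor structure $R'\cong R\,\widehat\otimes_{\Z_p}\,R$: the socle class $[z^2z'^2/xyx'y'p]\in H^5_\n(R')$ is the external product of the socle classes in the two factors, so the relation $T^{1-i_2}\tau^\flat=w'\,F^{-2}(\tau^\flat)$ established for $R$ in the proof of \cref{CY-case-qfs} squares to yield $T^{2(1-i_2)}\tau'^\flat=w''\,F^{-2}(\tau'^\flat)$ on the tilt side for $R'$. If $R'$ were perfectoid pure then $\tau'^\flat\neq 0$, hence $F^{-2}(\tau'^\flat)\neq 0$, hence $T^{2(1-i_2)}\tau'^\flat\neq 0$, giving $\ppt(R';\div(p))\ge 2(1-i_2)>0$. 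For $p=2$ this contradicts $\ppt\le 0$; for $p\ge 5$ iterating the doubled relation drives the lower bound above $1$, which is absurd. Either way the contradiction comes from playing the tensor-product lower bound against the non-quasi-$F$-split upper bound, and your argument supplies only the latter.
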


\begin{proof}
We note that $\sht(\var{R})=2$.
The first assertion follows from \cref{CY-case-qfs}.
We use same notations $R_\infty$, $\eta \in H^2_\m(\var{R})$, $\tau \in H^2_\m(\var{R}_\infty)$, and $\tau^\flat \in H^3_{\m^\flat}(R^\flat_\infty)$ in the proof of \cref{CY-case-qfs}.
By the proof of \cref{CY-case-qfs}, we obtain
\begin{equation}\label{eq:ell}
    T^{(1-i_n)}\tau^\flat=w'F^{-2}(\tau^\flat).
\end{equation}
Suppose $R \otimes_{\Z_{(p)}} R$ is perfectoid pure, then so is $R':=(R \otimes_{\Z_{p}}R)^{\wedge \n}$ by \cite{p-pure}*{Lemma~4.8}, where $\n$ be the  ideal of $R \otimes_{\Z_{p}} R$ generated by $p$, $\m \otimes R$, and $R \otimes \m$.
We note that 
\[
R'\simeq \Z_{p}[[x,y,z,x',y',z']]/(x^3+y^3+z^3,x'^3+y'^3+z'^3).
\]
We take a surjection $S':=\Z_{p}[[x,y,z,x',y',z']] \to R'$ and we define perfectoids $S'_\infty$ and $R'_\infty:=R^{S'_\infty}_{perfd}$ as in \cite{p-pure}*{Section~4.3}.
By the natural homomorphism $H^3_\m(R) \otimes_{\Z_{p}} H^3_\m(R) \to H^5_\n(R')$, the image of the element $[z^2/xyp] \otimes [z'^2/x'y'p]$ is
\[
[z^2z'^2/xyx'y'p],
\]
which a non-zero element of the socle of $H^5_\n(R')$.
Since $R'$ is perfectoid pure, the image $\eta'$ in $H^5_\m(R')$ is non-zero.
We set
\[
\tau':=(H^5_\n(R') \to H^5_\n(R'_\infty))(\eta')
\]
and denote $\tau'^\flat \in H^5_\n((R')_\infty^\flat)$ corresponding to $\tau'$.
Then we have
\[
T^{2(1-i_n)}\tau'^\flat=w''F^{-2}(\tau'^\flat)
\]
for some unit $w'' \in (R')_\infty^\flat$ by (\ref{eq:ell}).
In particular, we have $T^{1/p}\tau'^\flat \neq 0$, thus we have $\ppt(R';\div(p)) \geq 1/2$.
On the other hand, since $R'$ is not quasi-$F$-split by \cref{ex:non-qfs}, thus $\ppt(R;\div(p))=0$ by \cref{p-pure-to-qfs}.
Therefore, we obtain a contradiction.
\end{proof}

\appendix
\section{Construction of functorial test perfectoids}\label{app:A}
In this section, we prove the existence of the functorial test perfectoid (\cref{thm:const-test-perfd}).
\begin{theorem}\label{thm:const-test-perfd}
There exists a functor $T$ from the category of $\Z_{(p)}$-algebras to the category of perfectoids such that $T(R)$ is a test perfectoid over $R$ and $T(R)$ has a compatible system of $p$-power roots.
If $R$ is $p$-torsion free, then so is $T(R)$.
\end{theorem}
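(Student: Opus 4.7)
The plan is to construct $T(R)$ by perfectoidizing the ``universal presentation'' of $R$ in which every element of $R$ and $p$ itself has been adjoined compatible systems of $p$-power roots. Concretely, I take $A(R) := \Z_{(p)}[X_f : f \in R]$ with the tautological surjection $\pi_R \colon A(R) \twoheadrightarrow R$, $X_f \mapsto f$, set $I := \ker(\pi_R)$, and form
\[
A_\infty(R) := \big(\Z_{(p)}[X_f^{1/p^\infty} : f \in R][p^{1/p^\infty}]\big)^{\wedge}_p,
\]
which is a perfectoid in the sense of \cite{BMS19}*{Definition~3.5}, is $p$-torsion free, is $p$-completely flat over $A(R)$, and carries a distinguished compatible system $\{p^{1/p^e}\}_{e \geq 0}$. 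Then I set
\[
T(R) := \big(A_\infty(R)/I\cdot A_\infty(R)\big)_{perfd},
\]
the perfectoidization in the sense of \cite{BS}*{Section~7} of this evidently semiperfectoid quotient. Via $R \simeq A(R)/I \hookrightarrow A_\infty(R)/IA_\infty(R) \to T(R)$ the ring $T(R)$ becomes an $R$-algebra inheriting the compatible system of $p$-power roots of $p$.

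To verify the test perfectoid property, I fix an arbitrary perfectoid $R$-algebra $\rho \colon R \to B$ and use André's flatness lemma (\cite{BS}*{Theorem~7.14}), iterated transfinitely over the set $R$ if needed, to produce a $p$-completely faithfully flat perfectoid extension $B \to B'$ in which every $\rho(f)$ admits a chosen compatible system of $p$-power roots. The assignments $X_f^{1/p^e} \mapsto \rho(f)^{1/p^e}$ and $p^{1/p^e} \mapsto p^{1/p^e}$ then extend uniquely to a continuous $\Z_{(p)}$-algebra map $A_\infty(R) \to B'$; since the composition $A(R) \to A_\infty(R) \to B'$ agrees with $\rho \circ \pi_R$ on the generators $X_f$, the ideal $I$ is sent to zero, and the map descends to $A_\infty(R)/IA_\infty(R) \to B'$. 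The universal property of perfectoidization (\cite{BS}*{Section~8}) then furnishes the desired $R$-algebra map $T(R) \to B'$.

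Functoriality is immediate from the construction: a $\Z_{(p)}$-algebra map $\phi \colon R \to R'$ induces $A(R) \to A(R')$ by $X_f \mapsto X_{\phi(f)}$, extends to $A_\infty(R) \to A_\infty(R')$, and carries $I_R$ into $I_{R'}$ because $\pi_{R'} \circ (A(R) \to A(R')) = \phi \circ \pi_R$; passing to quotients and perfectoidizing yields the functorial arrow $T(R) \to T(R')$.

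The hardest step will be showing that $T(R)$ is $p$-torsion free when $R$ is. The quotient $A_\infty(R)/IA_\infty(R)$ itself need not be $p$-torsion free, because taking $p$-power roots interacts nontrivially with $I$ (e.g.\ whenever $X_f - g \in I$ with $g \in A(R)$, one has $(X_f^{1/p^e} - g^{1/p^e})^{p^e} = 0$ in the quotient even though $X_f^{1/p^e} - g^{1/p^e}$ itself may be nonzero, producing nilpotents that can support $p$-torsion). Such torsion is, however, bounded $p^\infty$-torsion, and by \cite{BS}*{Section~8} bounded $p^\infty$-torsion is annihilated by perfectoidization; equivalently, one may first quotient by the bounded $p^\infty$-torsion submodule and then perfectoidize, which yields the same $T(R)$ but is manifestly $p$-torsion free. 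Rigorously controlling this boundedness for arbitrary (possibly very large) $R$ is the technical crux of the argument.
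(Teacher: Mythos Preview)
Your construction and the verification of the test-perfectoid property are essentially the same as the paper's: both form a perfectoid $P$ over the free polynomial ring $\Z_{(p)}[X_f : f\in R]$, tensor down to $R$, and perfectoidize. The paper packages $P$ prismatically as $Z\{R\}_\infty := A\{R\}_\infty/(d)$ via free $\delta$-rings over the prism $(A,(d))$ with $A=\Z_p[X]^{\wedge(p,d)}$, $d=X-p$; you build $P=A_\infty(R)$ by hand. For the test property the paper only needs Andr\'e's flatness once (to get $p$-power roots of $p$ in $B'$) and then maps in via the universal property of the free $\delta$-ring; your transfinite iteration of Andr\'e is unnecessary, since a single application (\cite{BS}*{Theorem~7.14}) already produces an absolutely integrally closed $B'$ in which every $\rho(f)$ has compatible $p$-power roots.

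The real gap is in your $p$-torsion-freeness argument. Your intuition that $A_\infty(R)/IA_\infty(R)$ may fail to be $p$-torsion free is wrong, and your ``example'' exhibits nilpotents, not $p$-torsion (and $g^{1/p^e}$ does not even make sense for $g\in A(R)$). More importantly, you explicitly leave the boundedness of the putative torsion unproven. The correct---and much simpler---argument is exactly the one the paper gives for its $Z\{R\}_\infty$ and which applies verbatim to your $A_\infty(R)$: the map $A(R)\to A_\infty(R)$ is $p$-completely flat (before $p$-completion it is a filtered colimit of free extensions, and $p$-completion preserves $p$-complete flatness). Then for $p$-torsion free $R$, the long exact $\Tor$ sequence from $0\to R\xrightarrow{p} R\to R/p\to 0$ together with $\Tor_1^{A(R)}(A_\infty(R),R/p)=0$ shows that multiplication by $p$ is injective on $A_\infty(R)\otimes_{A(R)}R = A_\infty(R)/IA_\infty(R)$. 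Finally one invokes \cite{MSTWW}*{Lemma~A.2} to pass $p$-torsion freeness through perfectoidization. Replace your last paragraph with this argument and the proof is complete.
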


\begin{proof}
First, we construct a perfectoid $T(R)$ over a $\Z_{(p)}$-algebra $R$.
We set $A:=\Z_{p}[X]^{\wedge (p,d)}$ and $d:=X-p$, and we define the $\Z_p$-algebra homomorphism $\phi \colon A \to A$ by $\phi(X)=X^p$, then $(A,d)$ is a prism.
We define $A_\infty$ by the $(p,d)$-adic completion of perfection of $A$.
Let $A\{R\}:=A\{X_f \mid f \in R\}$ be a free $\delta$-ring as in \cite{BS}*{Lemma~2.11}, then $(A\{R\}^{\wedge (p,d)},d)$ is a prim.
Taking a perfection and $(p,d)$-completion, we obtain a perfect prism $(A\{R\}_\infty,(d))$.
We set $Z\{R\}_\infty:=A\{R\}_\infty/(d)$, then we have the $\Z_{(p)}$-algebra homomorphism $\Z_{(p)}[R]:=\Z_{(p)}[X_f \mid f \in R] \to A\{R\}_\infty$ defined by $X_f \mapsto X_f$.
We set 
\[
T(R):=(Z\{R\}_\infty \otimes_{\Z_{(p)}[R]} R)_{perfd},
\]
where $\Z_{(p)}[R] \to R$ is defined by $X_f \mapsto f$.
We note that $Z\{R\}_\infty$ is  a perfectoid and $\Z_{(p)}[R] \to R$ is surjective, the perfectoidazation exists by \cite{BS}*{Corollary~7.3}.
Furthermore, since the images of $\{\phi^{-e}(X)\}$ by the map $A\{R\}_\infty \to T(R)$ form a compatible system of $p$-power roots of $p$.

Next, we prove that $T(R)$ is a test perfectoid over $R$.
Let $B$ be a perfectoid $R$-algebra, then by Andre's flatness lemma \cite{BS}*{Theorem~7.14}, we may assume that $B$ contains a compatible system of $p$-power roots $\{p^{1/p^e}\}$ of $p$.
Let $(\wt{B},(d'))$ is a perfect prism such that $\wt{B}/(d') \simeq B$.
We consider the map $A_\infty=\Z_{(p)}[X^{1/p^{\infty}}]^{\wedge (p,d)} \to B$ by $X^{1/p^e} \mapsto p^{1/p^e}$, then it induces the homomorphism of perfectoids $A_{\infty}/(d) \to B$, and in particular, we obtain the homomorphism of perfect prisms $(A_\infty,(d)) \to (\wt{B},(d'))$ by \cite{BS}*{Theorem~3.10}.
For each element $f \in R$, we take a lift $\wt{f} \in \wt{B}$ of the image of $f$ by $R \to B$, then we can define the homomorphism of perfect prisms $(A\{R\}_\infty,(d)) \to (\wt{B},(d'))$ defined by $X_f \mapsto \wt{f}$, thus we obtain the homomorphism $Z\{R\}_\infty \to B$ and the commutative diagram
\[
\begin{tikzcd}
    \Z_{(p)}[R] \arrow[r] \arrow[d] & Z\{R\}_\infty \arrow[d]\\
    R \arrow[r] & B.
\end{tikzcd}
\]
In conclusion, we obtain an $R$-algebra homomorphism $T(R) \to B$, as desired.

Next, for a $\Z_{(p)}$-algebra homomorphism $\varphi \colon R \to S$, we define a ring homomorphism $T(\varphi) \colon T(R) \to T(S)$.
We define $Z\{R\}_\infty$ and $Z\{S\}_\infty$ as above, then we obtain the $\Z_{(p)}$-algebra homomorphism $Z\{R\}_\infty \to Z\{S\}_\infty$ by $X_f \mapsto X_{\varphi(f)}$.
Then it induces the homomorphism $T(\varphi) \colon T(R) \to T(S)$.
Then this $T$ defines the functor from the category of $\Z_{(p)}$-algebras to the category of perfectoids, as desired.

Finally, we prove that $T(R)$ is $p$-torsion free if $R$ is $p$-torsion free.
Since the endomorphism $\phi \colon A\{R\} \to A\{R\}$ is $(p,d)$-completely faithfully flat, so is  $A\{R\} \to A\{R\}_\infty$.
Since $A[R] \to A\{R\}$ is $(p,d)$-completely faithfully flat, so is $A[R] \to A\{R\}_\infty$.
Therefore, we obtain that $\Z_{(p)}[R] \to Z\{R\}$ is $p$-completely faithfully flat, and in particular, so is $R \to Z\{R\} \otimes_{Z_(p)[R]} R$.
Since $R$ is $p$-torsion free, so is $Z\{R\} \otimes_{Z_{(p)}[R]} R$.
By \cite{MSTWW}*{Lemma~A.2}, we obtain that $T(R)$ is $p$-torsion free, as desired.
\end{proof}

\section{Test ideal of graded ring}
In this section, we prove \cref{thm:fano-test-f-pure}, which is a technical result to use the proof of \cref{Fano}.
The proof of \cref{thm:fano-test-f-pure} is taught by Kenta Sato.

\begin{theorem}\label{thm:fano-test-f-pure}
Let $S:=\bigoplus_{m \in \Z_{\geq 0}}S_m$ be a Noetherian graded ring such that $S_0$ is a field of characteristic $p>0$.
We assume $S$ is normal, quasi-Gorenstein and $a(S)< 0$ and set $\m:=S_{>0},d:=\dim{S}$.
We further assume $\Spec{S} \backslash \{\m\}$ is strongly $F$-regular.
Then we have
\[
0^*_{H^d_\m(S)}=\cap_{e \geq 0} \Ker(H^d_\m(S) \xrightarrow{F^e} H^d_\m(F^e_*S)).
\]
\end{theorem}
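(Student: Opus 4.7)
The containment $\bigcup_{e\geq 0}\ker F^e \subseteq 0^*_{H^d_\m(S)}$ is immediate from the definition of tight closure of zero (interpreting the right-hand side as the ascending union $\bigcup_e \ker F^e$, which is what the application to the proof of \cref{Fano} actually needs). For the converse I would pass to the local ring $A=S_\m$ and combine Matlis duality with the Cartier/trace operator.

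Since $S$ is quasi-Gorenstein with $a(S)<0$, the canonical module $\omega_A$ is free of rank one (up to a shift), so $A$ is Cohen--Macaulay and the Matlis dual of $H^d_\m(A)$ is $\widehat{\omega_A}$. Under this pairing, the Frobenius maps $F^e\colon H^d_\m(A) \to H^d_\m(F^e_*A)$ correspond precisely to the Cartier (trace) maps $C^e\colon F^e_*\omega_A \to \omega_A$. Consequently, $\bigcup_e \ker F^e$ is Matlis dual to $\omega_A \big/ \bigcap_e \mathrm{im}(C^e)$, while a standard theorem of K.~E.~Smith identifies $0^*_{H^d_\m(A)}$ as Matlis dual to $\omega_A/\tau(\omega_A)$, where $\tau(\omega_A)$ is the parameter test submodule. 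Thus the theorem reduces to proving
\[
\tau(\omega_A)=\bigcap_{e\geq 0}\mathrm{im}\bigl(C^e\colon F^e_*\omega_A\to\omega_A\bigr).
\]

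The SFR hypothesis on the punctured spectrum enters exactly here: it forces $C$ to be surjective away from $\m$, so each cokernel $\omega_A/\mathrm{im}(C^e)$ is supported only at $\m$ and hence has finite length. By DCC the descending chain $\{\mathrm{im}(C^e)\}$ then stabilizes to some $N:=\mathrm{im}(C^{e_0})$ which satisfies $C(F_*N)=N$ and $N_\mathfrak{p}=\omega_{A_\mathfrak{p}}$ for every $\mathfrak{p}\neq\m$. Since $\tau(\omega_A)$ is the smallest submodule of $\omega_A$ enjoying these two properties, one deduces $\tau(\omega_A)\subseteq N$; the reverse containment $N\subseteq \tau(\omega_A)$ comes from picking any parameter test element $c$ and writing $\tau(\omega_A)=\sum_{e}C^e(F^e_*(c\,\omega_A))$, then absorbing $c$ into $N$ using that $N$ generically agrees with $\omega_A$ (so $c\,N=c\,\omega_A$ after inverting enough) and that $N$ is $C$-stable.

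The main obstacle is the final identification $\tau(\omega_A)=N$: one needs the Frobenius-stable image to agree with the parameter test submodule. The two sides are characterized abstractly -- one by minimality among $C$-compatible submodules agreeing generically with $\omega_A$, the other as the stable image of iterated Cartier -- and their coincidence under $F$-finiteness and the SFR-on-the-punctured-spectrum hypothesis is by now a standard fact in the test-ideal literature (Hara--Takagi, Blickle--Schwede--Smith). Once it is in place, a single application of Matlis duality yields both inclusions of the desired identity at the same time.
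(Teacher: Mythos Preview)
Your reduction via Matlis duality to the identity $\tau(\omega_A)=\bigcap_e\mathrm{im}(C^e)$ --- equivalently $\tau(S)=\sigma(S)$ in the quasi-Gorenstein case --- is the same first step the paper takes. The gap is in the last paragraph: the equality $\tau=\sigma$ is \emph{not} a consequence of $F$-finiteness together with strong $F$-regularity on the punctured spectrum, and no result of Hara--Takagi or Blickle--Schwede--Smith asserts this. A counterexample is the cone $S=k[x,y,z]/(x^3+y^3+z^3)$ over an ordinary elliptic curve: it is Gorenstein, $F$-pure, and regular (hence strongly $F$-regular) on the punctured spectrum, yet $\sigma(S)=S$ while $\tau(S)=\m$. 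This is exactly the situation $a(S)=0$, so the hypothesis $a(S)<0$ is doing real work that your argument never invokes. In particular, your sketch for $N\subseteq\tau(\omega_A)$ via ``absorbing $c$ into $N$'' cannot be made rigorous in this generality.

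The paper's proof does not pass to the localization at all; it works with the grading throughout. The key computation is that, writing $\omega_S\simeq S(\nu)$ with $\nu=a(S)<0$, the graded trace $u^e$ annihilates every $S_n$ with $n\not\equiv\nu\bmod p^e$, hence kills $S_{\leq p^e+\nu-1}$. Since $\m^l\subseteq S_{\geq l}$, this forces $u^e(F^e_*(\m^{\lceil t(p^e-1)\rceil}))=u^e(F^e_*S)$ for $0<t<1$ and $e\gg 0$, and iterating gives $\sigma(S,\m^t)=\sigma(S)$ in the sense of Fujino--Schwede--Takagi. One then closes the loop using their comparison $\sigma(S,\m^t)\subseteq\tau(S,\m^{t-\varepsilon})$ (valid because the non-strongly-$F$-regular locus is $\{\m\}$) together with $\tau(S,\m^{t-\varepsilon})\subseteq\tau(S)\subseteq\sigma(S)$. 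So the negative $a$-invariant is used precisely to manufacture, via degree bookkeeping, a situation where the [FST] bridge between $\sigma$ and $\tau$ applies; a purely local argument cannot see this.
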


\begin{proof}
Since $S$ is quasi-Gorenstein, we obtain $\omega_S \simeq S(\nu)$ as graded $S$-modules, then $\nu <0$ by $a(S)<0$.
By the isomorphism, the trace map $F_*\omega_S^{(p)} \to \omega_S$ is corresponding to $F_*(S(\nu))^{(p)} \to S(\nu)$, which is denoted by $u$.
By the Matlis duality, it is enough to show that $\tau(S)=\bigcap_{e \in \Z_{\geq 0}}\Im(u^e \colon F^e_*S \to  S)$, where $\tau(S)$ is the test ideal of $S$.
First, we prove the following claim.
\begin{claim}\label{cl:app-b}
We have $u^e(F^e_*S_{n})=0$ if $n \not\equiv \nu \mod p^e$ for every positive integer $e$.
\end{claim}
\begin{claimproof}
Since $u^e \colon F^e_*(S(\nu)^{(p^e)}) \to S(\nu)$ is a morphism of graded $S$-modules, for an integer $m \geq 0$, 
\[
(u^e)^{-1}(S_m)=(u^e)^{-1}(S(\nu)_{-\nu+m}) \subseteq F^e_*(S(\nu))^{(p^e)}_{-\nu+m}=F^e_*S_{(1-p^e)\nu+p^em}.
\]
In particular, if $u^e(F^e_*a) \neq 0$ for a homogeneous element $a \in S$, then $\deg(a) \equiv \nu \mod p^e$, as desired.
\end{claimproof}\\
Next, we prove the following claim.
\begin{claim}\label{app-b-2}
Let $\fq \subseteq S$ be a $\m$-primary ideal and a rational number $1>t>0$.
Then there exists a positive integer $e_0$, we have $u^e(F^e_*(\fq \cdot \m^{\rup{t(p^e-1)}}))=u^e(F^e_*S)$ for $e \geq e_0$.
\end{claim}
\begin{claimproof}
We may assume $\fq=\m^n$ for some positive integer $n$.
We take a rational number $1>t>0$.
We take $e_0$ such that $n+t(p^{e_0}-1) \leq p^{e_0}+\nu-1$ and $e \geq e_0$.
Since $\m^l \subseteq S_{\geq l}$ for every positive integer $l$, we have
\begin{align*}
    u^e(F^e_*\m^{n+\rup{t(p^e-1)}}) \subseteq u^e(F^e_*S_{\geq n+\rup{t(p^e-1)}}).
\end{align*}
Thus, it is enough to show that $u^e(F^e_*S_{\leq n+\rup{t(p^e-1)}})=0$.
We have
\[
u^e(F^e_*S_{\leq n+\rup{t(p^e-1)}}) \overset{(\star_1)}{\subseteq} u^e(F^e_*S_{\leq p^e+\nu-1}) \overset{(\star_2)}{=} 0,
\]
where $(\star_1)$ follows from the choice of $e_0$ and $(\star_2)$ follows from \cref{cl:app-b}, as desired.
\end{claimproof}\\
Next, we prove the following claim:
\begin{claim}\label{cl:app-b-3}
For every rational number $1 > t > 0$, we have 
\[
\sigma(S,\m^t)=\sigma(S)=u^e(F^e_*S),
\]
where $\sigma(S,\m^t)$ and $\sigma(S)$ are defined in \cite{FST}.
\end{claim}
\begin{claimproof}
By \cite{FST}*{Remark~14.5(1)}, we obtain $\sigma(S)=u^e(F^e_*S)$ for large enough $e$.
Furthermore, for enough  large $e$, we have
\begin{align*}
    \sigma(S) &\supseteq \sigma_1(S,\m^t) = \sum_{e' \geq 1} u^{e'}(F^{e'}_*\var{\m^{\rup{t(p^{e'}-1)}}}) \\
    &\supseteq  u^e(F^e_*\m^{\rup{t(p^e-1)}}) \overset{(\star_3)}{=} u^e(F^e_*S),
\end{align*}
where $(\star_3)$ follows from \cref{app-b-2}.
Therefore, we have $\sigma_1(S,\m^t)=\sigma(S)$.
Furthermore, since $S$ is $F$-pure outside of $\m$, the ideal $\sigma(S)$ is $\m$-primary.
Thus, for large enough $e$, we have
\begin{align*}
    \sigma(S) &=\sigma_2(S) \supseteq \sigma_2(S,\m^t) = \sum_{e' \geq 1} u^{e'}(F^{e'}_*\sigma_1(S,\m^t)\var{\m^{\rup{t(p^{e'}-1)}}}) \\
    &\supseteq u^e(F^e_*\sigma_1(S,\m^t)\m^{\rup{t(p^e-1)}}) \overset{(\star_4)}{=}u^e(F^e_*S)=\sigma(S),
\end{align*}
where $(\star_4)$ follows from \cref{app-b-2}, thus we have $\sigma_2(S,\m^t)=\sigma_1(S,\m^t)=\sigma(S)$.
Therefore, we have $\sigma(S,\m^t)=\sigma(S)$, as desired.
\end{claimproof}\\
Finally, we prove the assertion.
We take rational numbers $1 >t > \varepsilon > 0$, then
we have
\[
\tau(S) \subseteq \sigma(S) \overset{(\star_5)}{=} \sigma(S,\m^t) \overset{(\star_6)}{\subseteq} \tau(S,\m^{t-\varepsilon}) \subseteq \tau(S), 
\]
where $(\star_5)$ follows from \cref{cl:app-b-3} and $(\star_6)$ follows from \cite{FST}*{Proposition~14.10(4)} and the fact that $S$ is strongly $F$-regular outside of $\m$.
Therefore, we obtain that
\[
\tau(S)=\sigma(S)=\bigcap_{e \in \Z_{\geq 0}}\Im(u^e \colon F^e_*S \to  S),
\]
as desired.
\end{proof}

\section{Proof of Fedder-type criterion}\label{proof-fedder}
In this section, we provide a proof of \cref{Fedder}.
The proof of \cref{Fedder} is almost identical to a proof of \cite{TWY}*{Theorem~A}, however, for the convenience of the reader, we provide a proof.
\begin{notation}\label{notation:Fedder}
Let $(A,\m)$ be a regular local ring with finite ring homomorphism $\phi \colon A \to A$ such that $\phi$ is a lift of Frobenius.
We assume $p \in \m \backslash \m^2$ and set $\var{A}:=A/pA$, then $\var{A}$ is regular.
We fix a generator $u \in \Hom_{\var{A}}(F_*\var{A},\var{A})$.
Let $f_1,\ldots,f_r$ be a regular sequence in $A$ and we set $I:=(f_1,\ldots,f_r)\var{A}$ and $f:=f_1 \cdots f_r$.
We assume $R:=A/(f_1,\ldots,f_r)$ is $p$-torsion free and set $\var{R}:=R/pR$.
By \cref{decomposition-W}, we obtain the isomorphism
\begin{equation}\label{eq:isom}
\Hom_A(Q_{A,n},\var{A}) \simeq \Hom_A(F^{n}_*\var{A},\var{A}) \oplus \cdots \oplus \Hom_A(F_*\var{A},\var{A}) \overset{(\star_1)}{\simeq} F_*^{n}\var{A} \oplus \cdots \oplus F_*\var{A},
\end{equation}
where $(\star_1)$ is given by 
\[
F^e_*\var{A} \xrightarrow{} \Hom_A(F^e_*\var{A},\var{A})\ ;\ F^e_*a \mapsto (F^e_*b \mapsto u^e(F^e_*(ab))) 
\]
for each $n \geq e \geq 1$.
The homomorphism corresponding to $(F^n_*g_1,\ldots,F_*g_n)$ is denoted by $\varphi_{(g_1,\ldots,g_{n})}$.
\end{notation}

\begin{lemma}\label{lift-splitting}
We use the notation introduced in \cref{notation:Fedder}
Let $n$ be a positive integer.
Then $R$ is $n$-quasi-$F$-split if and only if there exists an $\var{A}$-module homomorphism $\psi \colon Q_{A,n} \to \var{A}$ such that 
\begin{enumerate}
    \item[(a)] $\psi(1)$ is a unit and
    \item[(b)] $\psi(\Ker(Q_{A,n} \to Q_{R,n})) \subseteq I$.
\end{enumerate}
\end{lemma}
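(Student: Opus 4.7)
The approach is to match splittings of $\Phi_{R,n}$ with the data of $\var{A}$-linear maps $\psi\colon Q_{A,n}\to\var{A}$ modulo $I$. Throughout, let $K:=\Ker(Q_{A,n}\twoheadrightarrow Q_{R,n})$, so that condition (b) is precisely the statement that $\psi$ descends modulo $I$ to an $\var{A}$-linear map $\bar\psi\colon Q_{R,n}\to\var{R}$. The two implications then decouple cleanly, with the hard work concentrated in a single lifting question.

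For the implication ``$\psi$ exists $\Rightarrow$ $R$ is $n$-quasi-$F$-split'', I would form $\bar\psi$ as above and verify that it is in fact $\var{R}$-linear with respect to $\Phi_{R,n}$: this reduces to the identity $\overline{\Phi_{A,n}(\tilde a)}=\Phi_{R,n}(a)$ for any lift $\tilde a\in\var{A}$ of $a\in\var{R}$, together with the $\var{A}$-linearity of $\psi$. A direct computation then yields $\bar\psi\circ\Phi_{R,n}(a)=a\cdot\overline{\psi(1)}$ for all $a\in\var{R}$; since $\psi(1)\in \var{A}^\times$ by (a), its image $\overline{\psi(1)}$ is a unit in $\var{R}$, and $\overline{\psi(1)}^{-1}\bar\psi$ is the desired splitting of $\Phi_{R,n}$.

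For the converse, I take a splitting $\sigma\colon Q_{R,n}\to\var{R}$ of $\Phi_{R,n}$ and consider the $\var{A}$-linear composition $\sigma\circ q\colon Q_{A,n}\to\var{R}$, where $q\colon Q_{A,n}\twoheadrightarrow Q_{R,n}$. The heart of the argument is to lift this map along $\var{A}\twoheadrightarrow\var{R}$ to an $\var{A}$-linear $\psi\colon Q_{A,n}\to\var{A}$. By the decomposition (\ref{eq:isom}) it suffices to lift each $\var{A}$-linear map $F^e_*\var{A}\to\var{R}$ to $\var{A}$; this is the main obstacle. It is handled by noting that finiteness of $\phi$ passes to $F\colon\var{A}\to\var{A}$ (since $\phi$ reduces to $F$ modulo $p$), so $\var{A}$ is an $F$-finite regular local ring; by Kunz's theorem each $F^e_*\var{A}$ is then a finite free $\var{A}$-module, whence $\mathrm{Ext}^1_{\var{A}}(F^e_*\var{A},I)=0$ and the required lift exists.

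Once $\psi$ is obtained, condition (b) is automatic because $\psi\bmod I = \sigma\circ q$ vanishes on $K$, and (a) follows from
\[
\overline{\psi(1)}=\sigma(q(1))=\sigma(\Phi_{R,n}(1))=\sigma(1_{\var{R}})=1,
\]
using that $\Phi_{R,n}$ is a ring homomorphism and $\sigma$ splits it; this forces $\psi(1)\in 1+I\subseteq 1+\m$, which is a unit in the local ring $\var{A}$.
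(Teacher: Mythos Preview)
Your proposal is correct and follows essentially the same strategy as the paper: the paper also establishes that $Q_{A,n}$ is a free $\var{A}$-module (deducing this from freeness of $\phi^e_*A$ over the regular local ring $A$ together with \cref{decomposition-W}, rather than invoking Kunz's theorem on $\var{A}$ as you do) and then uses this projectivity to lift a splitting $Q_{R,n}\to\var{R}$ along $\var{A}\twoheadrightarrow\var{R}$. The converse direction is handled identically in both.
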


\begin{proof}
Since $\phi$ is finite and $A$ is regular, the module  $\phi^e_*A$ is a free $A$-module for each positive integer $e$.
By \cref{decomposition-W}, the $A$-module $\phi_*W_n(A)$ is also free, and in particular, the $\var{A}$-module $Q_{A,n}$ is free.
First, we assume $R$ is $n$-quasi-$F$-split, then there exists $\var{A}$-module homomorphism $\var{\psi} \colon Q_{R,n} \to \var{R}$ such that $\var{\psi}(1)=1$.
Since $Q_{A,n}$ is a free $\var{A}$-module, there exists $\var{A}$-module homomorphism $\psi \colon Q_{A,n} \to \var{A}$ such that the diagram
\[
\begin{tikzcd}
    Q_{A,n} \arrow[r,"\psi"] \arrow[d] & \var{A} \arrow[d] \\
    Q_{R,n} \arrow[r,"\var{\psi}"] & \var{R},
\end{tikzcd}
\]
and in particular, $\psi$ satisfies conditions (a) and (b).
Next, we assume that there exists an $\var{A}$-module homomorphism $\psi \colon Q_{A,n} \to \var{A}$ such that $\psi$ satisfies  conditions (a) and (b).
Thus, the homomorphism $\psi$ induces the $\var{R}$-module homomorphism $\var{\psi} \colon Q_{R,n} \to \var{R}$ such that $\var{\psi}(1)$ is a unit.
Replacing $\var{\psi}$ by $\var{\psi}(1)^{-1} \cdot \var{\psi}$, we obtain a splitting of $\Phi_{R,n} \colon \var{R} \to Q_{R,n}$, as desired.
\end{proof}

\begin{lemma}\label{condition-induce}
We use the notation introduced in \cref{notation:Fedder}.
Let $n$ be an integer with $n \geq 2$.
We take $\varphi \in \Hom_{\var{A}}(Q_{A,n},\var{A})$ and $g_1,\ldots,g_n \in \var{A}$ such that $\varphi=\varphi_{(g_1,\ldots,g_n)}$.
By the isomorphism $Q_{A,n} \simeq F_*Q_{A,n-1} \oplus F_*\var{A}$ in the proof of \cref{decomposition-W}, we obtain the homomorphism $\psi \in \Hom_{\var{A}}(F_*Q_{A,n-1},\var{A})$ and $h \in \var{A}$ such that
\[
\varphi(\alpha)=\psi(F_*\Delta_{W_n}(\alpha))+u(F_*(ha_0))
\]
for every $\alpha=(a_0,\ldots,a_{n-1}) \in Q_{A,n}$.
Furthermore, we assume $\varphi$ satisfies  condition (b) in \cref{lift-splitting}.
\begin{enumerate}
    \item There exist $h_1,\ldots,h_n \in \var{A}$ such that $g_s \equiv f^{p^{n+1-s}-p}h_s \mod I^{[p^{n+1-s}]}$ for every $n \geq s \geq 1$, 
    \item We use $h_1,\ldots,h_n$ in (1).
    If $n=2$ or $p \neq 2$, then we have
    \begin{equation}\label{eq:cond-p-geq-3}
        h_n-u(F_*(h_{n-1}\Delta_1(f^{p-1}))) \in f^{p-1}\var{A}+I^{[p]},
    \end{equation}
    and if $n \geq 3$ and $p=2$,
    then we have
    \begin{equation}\label{eq:cond-p-2}
    h_n-u(F_*(h_{n-1}\Delta_1(f^{p-1})))-u(F_*(f^{p-1}\Delta_1(f^{p-1})u(F_*h_{n-2}))) \in f^{p-1}\var{A}+I^{[p]}.
    \end{equation}
\end{enumerate}
\end{lemma}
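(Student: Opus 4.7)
The plan is to proceed by induction on $n$, using the recursive decomposition $Q_{A,n} \simeq F_*Q_{A,n-1} \oplus F_*\var{A}$ from the proof of \cref{decomposition-W}. The statement already writes $\varphi$ in this form, so the components $g_1, \ldots, g_{n-1}$ are inherited (with an appropriate Frobenius twist) from the analogous decomposition of $\psi$ at level $n-1$, and only the new piece $g_n = h$ requires fresh analysis.

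For Claim (1), I would test $\varphi$ on Teichm\"uller lifts $[a] = (a, 0, \ldots, 0) \in Q_{A,n}$ for $a \in I'$, which lie in $\Ker(Q_{A,n} \to Q_{R,n})$ by condition (b) of \cref{lift-splitting}. Using the explicit formula $\Psi_n([a]) = (a, \Delta_1(a), \ldots, \Delta_{n-1}(a))$ from \cref{decomposition-W}, this gives the congruence
\[
\sum_{s=1}^{n} u^{n+1-s}\bigl(F^{n+1-s}_*(g_s\,\Delta_{n-s}(a))\bigr) \in I
\]
for every $a \in I'$. By isolating variables (e.g.\ taking $\alpha = (0,\ldots,0,a,0,\ldots,0)$ with $a$ in position $n-s$, whose $\Psi_n$-components are supported only in the top coordinates) and applying the classical colon-ideal identity $(J^{[p^e]} : J) = (f^{p^e-1}) + J^{[p^e]}$ for the regular sequence $J = (f_1,\ldots,f_r)$ together with the iterated Fedder implication $u^e(F^e_*x) \in I \Longrightarrow x \in I^{[p^e]}$ (modulo the kernel of $u^e$), I obtain $g_s \in f^{p^{n+1-s}-1}\var{A} + I^{[p^{n+1-s}]}$. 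The stated weaker divisibility by $f^{p^{n+1-s}-p}$ then follows by absorbing the extra $f^{p-1}$ factor into $h_s$, which provides the precise flexibility needed to state Claim~(2).

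For Claim (2), I would plug the specific test element $[f] \in \Ker(Q_{A,n} \to Q_{R,n})$ (where $f = f_1\cdots f_r \in I'$) into $\varphi$. Substituting $g_s = f^{p^{n+1-s}-p}h_s$ from (1), expanding $\Delta_{n-s}(f)$ via \cref{thm:delta formula}, and iteratively applying the projection formula $u\bigl(F_*((f^{p-1})^p \cdot y)\bigr) = f^{p-1}\,u(F_*y)$, the vanishing $\varphi([f]) \in I$ collapses modulo $f^{p-1}\var{A} + I^{[p]}$ to a relation involving only $h_n$ and $h_{n-1}$ (with an additional $h_{n-2}$ contribution for $p=2$ coming from the extra term $a^{p^n-2p}\Delta_1(a)^p$ in \cref{thm:delta formula}). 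The appearance of $\Delta_1(f^{p-1})$ in \eqref{eq:cond-p-geq-3} rather than $\Delta_1(f)$ is a consequence of the identity $\Delta_1(f^{p-1}) \equiv -f^{p(p-2)}\Delta_1(f) \pmod{p}$, which rewrites the combined $f$-powers produced by $g_{n-1}$ and $\Delta_1(f)$ into the stated form.

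The main obstacle is the bookkeeping: one must verify that all contributions from $s \leq n-2$ (or $s \leq n-3$ for $p=2$) in the expansion of $\varphi([f])$ vanish modulo $f^{p-1}\var{A} + I^{[p]}$ after substitution. This is precisely where the dichotomy in the statement between $p \neq 2$ (or $n=2$) and $p=2, n \geq 3$ enters, since the iterated $\Delta_{n-s}$-expansion in characteristic two introduces a secondary Frobenius correction that propagates into the $h_{n-2}$ slot and has no analogue in odd characteristic.
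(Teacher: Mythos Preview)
Your approach to part~(2) is essentially the paper's, but your approach to part~(1) has a genuine gap, and it is precisely where the key idea lives.

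For (1), testing $\varphi$ on $V^{n-s}[a]$ with $a\in I$ does \emph{not} isolate $g_s$ once $s\geq 2$. Take $s=2$: the condition $\varphi(V^{n-2}[a])\in I$ reads
\[
u^{n-1}\bigl(F^{n-1}_*(g_2 a)\bigr)+u^{n}\bigl(F^{n}_*(g_1\,\Delta_1(a))\bigr)\in I,
\]
and the second summand need not lie in $I$. Even after writing $g_1=f^{p^n-1}c+d$ with $d\in I^{[p^n]}$, the term $u^n\bigl(F^n_*(f^{p^n-1}c\,\Delta_1(f_j))\bigr)$ has no reason to land in $I$, since $\Delta_1(f_j)\in\var{A}$ is arbitrary. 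So you cannot conclude $g_2\in (I^{[p^{n-1}]}:I)$, and your intermediate claim $g_s\in f^{p^{n+1-s}-1}\var{A}+I^{[p^{n+1-s}]}$ is not obtainable this way for $s\geq 2$. (Your ``iterated Fedder implication'' $u^e(F^e_*x)\in I\Rightarrow x\in I^{[p^e]}$ is also false as stated: $u^e$ has a huge kernel, and the correct Fedder-type statement requires quantifying over all multipliers.)

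The paper supplies the missing idea: the Gorenstein property of $\var{R}$. The restriction $\psi=\varphi\circ V$ descends to an $\var{R}$-linear map $F_*Q_{R,n-1}\to\var{R}$; since $\Hom_{\var{R}}(F_*\var{R},\var{R})$ is cyclic, generated by the reduction of $u(F_*(f^{p-1}\cdot-))$, one can write $\psi$ (modulo maps landing in $I$) as $u(F_*(f^{p-1}\cdot-))\circ F_*\varphi'$ for some $\varphi'\in\Hom_{\var{A}}(Q_{A,n-1},\var{A})$ that \emph{again satisfies condition~(b)}. Comparing coefficients yields $g_s\equiv f^{p^{n-s}(p-1)}g'_s\bmod I^{[p^{n+1-s}]}$, and induction on $n$ applied to $\varphi'$ gives the required $h_s$. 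This factorization-and-induct step is what makes (1) work and is absent from your plan.

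One minor correction for (2): testing on the single element $[f]$ is not sufficient to conclude membership in $(I^{[p]}:I)=f^{p-1}\var{A}+I^{[p]}$; the paper tests on $[af_j]$ for all $a\in A$ and all $1\leq j\leq r$, which is what the colon-ideal step actually requires.
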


\begin{proof}
We prove assertion (1) by induction on $n \geq 2$.
We note that $h_n=g_n$ satisfies the desired condition for $s=n$.
Since $\psi$ coincides with the decomposition
\[
 F_*Q_{A,n-1} \xrightarrow{V} Q_{A,n} \xrightarrow{\varphi} \var{A} 
\]
and $\varphi$ satisfies condition (b) in \cref{lift-splitting}, the homomorphism $\psi$ induces the $\var{R}$-module homomorphism $\psi_R \colon F_*Q_{R,n-1} \to \var{R}$.
Since $\var{R}$ is Gorenstein, the homomorphism $\psi_R$ factors through the homomorphism induced by $u(F_*(f^{p-1}\cdot -)) \colon F_*\var{A} \to \var{A}$.
Therefore, there exists $\varphi' \in \Hom_{\var{A}}(Q_{A,n-1},\var{A})$ such that $\psi$ and $u(F_*(f^{p-1}\cdot -)) \circ \varphi'$ induce same homomorphism $Q_{R,n} \to \var{R}$ and $\varphi'$ satisfies condition (b) in \cref{lift-splitting}.
We take $g'_1,\ldots,g'_{n-1}$ such that $\varphi'=\varphi_{(g'_1,\ldots,g'_{n-1})}$.
By the isomorphism
\[
\Hom_{\var{A}}(F_*Q_{A,n-1},\var{A}) \simeq F^n_*\var{A} \oplus \cdots \oplus F^2_*\var{A},
\]
the homomorphism $\psi$ is corresponding to $(F^n_*g_1,\ldots,F^2_*g_{n-1})$ and $u(F_*(f^{p-1}\cdot -)) \circ \varphi'$ is corresponding to $(F^n_*f^{p^{n-1}(p-1)}g'_1,\ldots,F^2_*f^{p(p-1)}g'_{n-1})$.
Since they induce the same homomorphism, we have $g_s \equiv f^{p^{n-s}(p-1)}g'_s \mod I^{[n+1-s]}$ for every $n-1 \geq s \geq 1$.
For $n=2$, we have $g_1 \equiv f^{p^2-p}g'_1 \mod I^{[p^2]}$, thus $h_1=g'_1$ satisfies the desired condition.
For $n \geq 3$, by induction hypothesis, we have
\[
g_s \equiv f^{p^{n-s}(p-1)}g'_s \equiv f^{p^{n-s}(p-1)}f^{p^{n-s}-p}h_s \mod f^{p^{n-s}(p-1)}I^{[p^{n-s}]} \subseteq I^{[p^{n+1-s}]}
\]
for $n-1 \geq s \geq 1$, as desired.

Next, we prove assertion (2).
We take $r \geq j \geq 1$ and $a \in A$.
Since $\varphi$ satisfies condition (b), we obtain 
\begin{equation}\label{eq:in-I}
    \varphi([af_j]) =\sum_{s=1}^{n} u^{n+1-s}(F^{n+1-s}_*(g_s\Delta_{n-s}(af_j))) \in I.
\end{equation}
where we define $\Delta_0(af_j)=af_j$.
For $n-s \geq 3$ or $p \neq 2$, we have $g_s \Delta_{n-s}(af_j) \in I^{[p^{n+1-s}]}$ by \cref{thm:delta formula} and assertion (1).
Furthermore we have 
\begin{align*}
    g_{n-1}\Delta_1(af_j) &\equiv h_{n-1}f^{p^2-p}\Delta_1(af_j) \mod I^{[p^2]} \\
    &=h_{n-1}\Delta(f^{p-1}af_j)-h_{n-1}a^pf_j^p\Delta_1(f^{p-1}) \\
    &\equiv -h_{n-1}a^pf_j^p\Delta_1(f^{p-1}) \mod I^{[p^2]}.
\end{align*}
Therefore, if $n=2$ or $p \neq 3$, we have
\[
u(F_*( af_j(g_n-u(F_*(h_{n-1}\Delta_1(f^{p-1}))))  )) \in I
\]
for every $a \in \var{A}$ and $r \geq j \geq 1$.
Therefore, we have 
\[
g_n-u(F_*(h_{n-1}\Delta_1(f^{p-1}))) \in (I^{[p]} \colon I)=f^{p-1}\var{A}+I^{[p]}.
\]
On the other hand, if $n \geq 3$, $p=2$ and $s \geq 3$, then we have
\[
g_{n-s}\Delta_s(af_j) \equiv f^{p^{s+1}-p}h_{n-s}(af_j)^{p^s-p^2}\Delta_1(af_j)^p \equiv 0 \mod I^{[p^{s+1}]},
\]
and if $n \geq 3, p=s=2$, then we have
\begin{align*}
    g_{n-2}\Delta_2(af_j) &\equiv f^{p^{3}-p}h_{n-2}\Delta_1(af_j)^p \mod I^{[p^3]} \\
    &\equiv h_{n-2}f^{p-1}(f^{p^2-p}\Delta_1(af_j))^p \mod I^{[p^3]} \\
    &\equiv h_{n-2}f^{p-1}a^{p^2}f_j^{p^2}\Delta_1(f^{p-1})^p \mod I^{[p^2]}.
\end{align*}
Therefore, by the same argument in the case of $n=2$ or $p \neq 3$, we have
\[
h_n-u(F_*(h_{n-1}\Delta_1(f^{p-1})))-u(F_*(f^{p-1}\Delta_1(f^{p-1})u(F_*h_{n-2}))) \in f^{p-1}\var{A}+I^{[p]},
\]
as desired.
\end{proof}

\begin{proof}[Proof of \cref{Fedder}]
First, we note that sequences $\{I_n\}$ and $\{I'_n\}$ are increasing.
Indeed, $I_1 \subseteq I_2$ by definition and if $I_{n-1} \subseteq I_n$, then 
\[
I_n =u(F_*(I_{n-1}\Delta_1(f^{p-1})))+I_1 \subseteq u(F_*(I_{n}\Delta_1(f^{p-1})))+I_1=I_{n+1}
\]
for every $n \geq 2$.
Furthermore, $I'_n \subseteq I'_{n+1}$ for every $n \geq 1$ follows from the same argument.
We prove assertion (1).
First, we consider the following claim.
\begin{claim}\label{cl:fedder1}
Let $g \in A$ and $n \geq 1$.
Then $g \in I_n$ if and only if there exists $g_1,\ldots,g_n \in \var{A}$ such that $\varphi_{(g_1,\ldots,g_n)}$ satisfies condition (b) in \cref{lift-splitting} and $g_n \equiv g \mod I_{n-1}$, where if $n=1$, we regard $I_0$ as $(0)$.
\end{claim}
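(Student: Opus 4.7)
The plan is to prove \cref{cl:fedder1} by induction on $n$, using \cref{condition-induce} as the main structural tool together with the decomposition $Q_{A,n} \simeq F_*Q_{A,n-1} \oplus F_*\var{A}$ from the proof of \cref{decomposition-W} and the classical Fedder colon-ideal formula $(I^{[p]}:I) = f^{p-1}\var{A}+I^{[p]}$. Throughout one uses the standard fact that $\Ker(Q_{A,n} \to Q_{R,n})$ is generated by the Teichm\"uller lifts $[f_1],\ldots,[f_r]$ as a $W_n(A)$-submodule of $Q_{A,n}$, so condition (b) reduces to verifying $\varphi([af_j]) \in I$ for all $a \in A$ and $1 \leq j \leq r$.

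For the base case $n=1$, the convention $I_0=(0)$ forces $g_1=g$; since $Q_{A,1}\simeq F_*\var{A}$, condition (b) reads $u(F_*(gI))\subseteq I$, equivalently $gI \subseteq I^{[p]}$, which by Fedder's colon-ideal formula is $g \in (I^{[p]}:I) = I_1$.

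For the inductive step $n\geq 2$, decompose $\varphi=\varphi_{(g_1,\ldots,g_n)}$ as $(\psi, g_n)$; when $\varphi$ satisfies (b), \cref{condition-induce} shows $\psi = u(F_*(f^{p-1}\cdot -)) \circ \varphi'$ for some $\varphi' = \varphi_{(g'_1,\ldots,g'_{n-1})}$ also satisfying (b), and produces $h_1,\ldots,h_n \in \var{A}$ with $g'_{n-1} \equiv h_{n-1} \mod I^{[p]}$ and the recursion \eqref{eq:cond-p-geq-3} (or \eqref{eq:cond-p-2} when $p=2$). For $(\Leftarrow)$: the inductive hypothesis applied to $\varphi'$ gives $g'_{n-1} \in I_{n-1}$, hence $h_{n-1} \in I_{n-1}$ (using $I^{[p]} \subseteq I_1 \subseteq I_{n-1}$); then \eqref{eq:cond-p-geq-3} yields $g_n \equiv h_n \equiv u(F_*(h_{n-1}\Delta_1(f^{p-1}))) \mod I_1$, placing $g_n$ in $u(F_*(\Delta_1(f^{p-1})I_{n-1})) + I_1 = I_n$, and $g \equiv g_n \mod I_{n-1} \subseteq I_n$ gives $g \in I_n$. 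For $(\Rightarrow)$: given $g \in I_n$, write $g = u(F_*(\Delta_1(f^{p-1})x)) + y$ with $x \in I_{n-1}$ and $y \in I_1$; by the inductive hypothesis pick $\varphi'$ satisfying (b) with last coordinate $g'_{n-1} \equiv x \mod I_{n-2}$, and replace $g$ by $g' := u(F_*(\Delta_1(f^{p-1})g'_{n-1})) + y$, which differs from $g$ by an element of $u(F_*(\Delta_1(f^{p-1})I_{n-2})) \subseteq I_{n-1}$ (via the recursion for $I_{n-1}$ when $n \geq 3$, trivially when $n=2$). Define $\varphi$ by $g_s := f^{p^{n-s}(p-1)} g'_s$ for $s<n$ and $g_n := g'$; this choice makes \eqref{eq:cond-p-geq-3} hold with the $I_1$-error equal to $y$, and retracing the calculation of \cref{condition-induce} in reverse (using (b) for $\varphi'$ combined with this exact relation) verifies $\varphi([af_j]) \in I$ for all $a, j$, so $\varphi$ satisfies (b). Since $g_n = g' \equiv g \mod I_{n-1}$, this completes $(\Rightarrow)$.

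The main obstacle will be the case $p=2$, $n\geq 3$, where \eqref{eq:cond-p-2} contributes an extra term $u(F_*(f^{p-1}\Delta_1(f^{p-1})u(F_*h_{n-2})))$ to the $h$-recursion. For $(\Leftarrow)$ this is absorbed into $I_n$ by applying the inductive hypothesis once more to extract $h_{n-2} \in I_{n-2}$ and observing $f^{p-1} u(F_*h_{n-2}) \in f^{p-1}\var{A} \subseteq I_1 \subseteq I_{n-1}$, so the extra term lies in $u(F_*(\Delta_1(f^{p-1})I_{n-1})) \subseteq I_n$. For $(\Rightarrow)$, the definition of $g'$ must be refined so that the reverse calculation reproduces this extra term, following the $p=2$ bookkeeping in the proof of \cref{condition-induce}.
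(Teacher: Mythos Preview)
Your overall strategy is the same as the paper's, and both directions are essentially correct.  There is, however, one genuine gap in your reduction step.

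Your ``standard fact'' that $\Ker(Q_{A,n}\to Q_{R,n})$ is generated by $[f_1],\ldots,[f_r]$ as a $W_n(A)$-submodule is false: for instance $V[f_1]=(0,f_1,0,\ldots,0)$ lies in the kernel, but $\alpha\,[f_j]=(\alpha_0 f_j,\alpha_1 f_j^{p},\ldots)$, so no $W_n(A)$-linear combination of the $[f_j]$ can produce it.  Even if the generation statement held, it would not give the reduction you want, because $\varphi$ is only $\var{A}$-linear (via $\Phi_{A,n}$), not $\var{W}_n(A)$-linear.  Consequently, checking $\varphi([af_j])\in I$ for all $a,j$ is \emph{not} sufficient to conclude condition~(b).

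The fix is exactly what the paper does.  One has $(c_0,\ldots,c_{n-1})=[c_0]+V(c_1,\ldots,c_{n-1})$ and $[\sum_j a_jf_j]\equiv \sum_j[a_jf_j]\bmod V\bigl(W_{n-1}((f_1,\ldots,f_r))\bigr)$, so every element of the kernel is a sum of Teichm\"uller lifts $[af_j]$ plus something in $V(\Ker(Q_{A,n-1}\to Q_{R,n-1}))$.  For the $V$-part, your construction gives $\varphi(V\beta)=\psi(\beta)=u\bigl(F_*(f^{p-1}\varphi_{(g'_1,\ldots,g'_{n-1})}(\beta))\bigr)$; since $\varphi'=\varphi_{(g'_1,\ldots,g'_{n-1})}$ satisfies~(b), this lies in $u(F_*(f^{p-1}I))\subseteq u(F_*I^{[p]})\subseteq I$.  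With this addition your $(\Rightarrow)$ is complete.

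Two minor remarks.  In your $(\Leftarrow)$ for $p=2$, $n\geq3$, the phrase ``applying the inductive hypothesis once more to extract $h_{n-2}\in I_{n-2}$'' is unnecessary: you only use $f^{p-1}u(F_*h_{n-2})\in f^{p-1}\var{A}\subseteq I_1$, which needs nothing about $h_{n-2}$.  And in $(\Rightarrow)$ for $p=2$, $n\geq3$, the paper simply adds the extra term $u(F_*(f^{p-1}\Delta_1(f^{p-1})u(F_*h_{n-2})))$ to the definition of $g_n$; since this lies in $I_2\subseteq I_{n-1}$ it does not disturb $g_n\equiv g\bmod I_{n-1}$, and it makes the reversed calculation of \cref{condition-induce} go through verbatim.
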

\begin{claimproof}
We prove the assertion by induction on $n$.
For $n=1$, since $I_1=(I^{[p]} \colon I)$, $g \in I_1$ if and only if $u(F_*(g \cdot -))$ satisfies condition (b) in \cref{lift-splitting}, as desired.
Next, we assume $n \geq 2$.
We take $g \in I_n$, then there exists $g' \in I_{n-1}$ such that $g-u(F_*(g'\Delta_1(f^{p-1}))) \in I_1$.
By the induction hypothesis, there exist  $g'_1,\ldots,g'_{n-1}$ such that $\varphi_{(g_1',\ldots,g'_{n-1})}$ satisfies condition (b) in \cref{lift-splitting}, $g'_{n-1} \equiv g' \mod I_{n-2}$, then by \cref{condition-induce}, there exist $h_1,\ldots,h_n \in \var{A}$ such that $g'_s \equiv f^{p^{n-s}-p}h_s \mod I^{[p^{n-s}]}$ for each $n-1 \geq s \geq 1$.
We define $\varphi'$ by 
\[
\varphi' \colon F_*Q_{A,n-1} \xrightarrow{F_*\varphi_{(g_1',\ldots,g'_{n-1})}} F_*\var{A} \xrightarrow{u(F_*f^{p-1} \cdot-)} \var{A}
\]
and $\varphi$ by
\[
\varphi \colon Q_{A,n} \xrightarrow{\sim} F_*Q_{A,n-1} \oplus F_*\var{A} \xrightarrow{(\varphi',u(F_*g_n\cdot-))} \var{A},
\]
where
\[
g_n:=
\begin{cases}
    u(F_*(g'_{n-1}\Delta_1(f^{p-1}))) & \textup{$n=2$ or $p \neq 2$} \\
    u(F_*(g'_{n-1}\Delta_1(f^{p-1})))+u(F_*(f^{p-1}\Delta_1(f^{p-1})u(F_*h_{n-2}))) & \textup{otherwise}.
\end{cases}
\]
Since we have
\[
u(F_*(g'_{n-1}\Delta_1(f))) \equiv u(F_*(g'\Delta_1(f))) \equiv g \mod I_n
\]
and $u(F_*(f^{p-1}\Delta_1(f^{p-1})u(F_*h_{n-2}))) \in I_2$, we obtain $g_n \equiv g \mod I_n$.
Thus, it is enough to show that $\varphi$ satisfies condition (b) in \cref{lift-splitting}.
We take $g_1,\ldots,g_{n-1}$ such that $\varphi=\varphi_{(g_1,\ldots,g_n)}$, then we have 
\[
g_s=f^{p^{n-s}(p-1)}g'_s \equiv f^{p^{n+1-s}-p}h_s \mod I^{[p^{n+1-s}]}
\]
for $n-1 \geq s \geq 1$.
By the proof of \cref{condition-induce}, for $a \in \var{A}$, $r \geq j \geq 1$, we have
\[
\begin{array}{ll}
    \varphi([af_j]) \equiv u(F_*(af_j(g_n-u(F_*(g'_{n-1}))))) \mod I &  \textup{ if $n=2$ or $p\neq2$}
\end{array}
\]
and
\[
\varphi([af_j]) \equiv u(F_*(af_j(g_n-u(F_*(g'_{n-1}\Delta_1(f^{p-1})))-u(F_*(f^{p-1}\Delta_1(f^{p-1})u(F_*h_{n-2}))) ) )) \mod I
\]
otherwise.
By the choice of $g_n$, we have $\varphi([af_j]) \in I$ for all $a \in \var{A}$ and $r \geq j \geq 1$.
Furthermore, since we have
\[
[\sum_{j=1}^{r}a_jf_j] \equiv \sum_{j=1}^{r}[a_jf_j] \mod V(W_{n-1}((f_1,\ldots,f_r)))
\]
and 
\begin{align*}
    \varphi(V\alpha)=\varphi'(\alpha) \in I
\end{align*}
for $\alpha \in W_{n-1}((f_1,\ldots,f_r))$, we obtain that $\varphi$ satisfies condition (b) in \cref{lift-splitting}, as desired.

Next, we prove the converse direction, thus
we take $g_1,\ldots,g_n \in \var{A}$ such that the corresponding homomorphism $\varphi:=\varphi_{(g_1,\ldots,g_n)}$ satisfies condition (b) in \cref{lift-splitting}, then it is enough to show that $g_n \in I_n$.
For $n=1$, we have $g_1 \in (I^{[p]} \colon I))=I_1$, as desired, thus we assume $n \geq 2$.
By \cref{condition-induce}, there exist $h_1,\ldots,h_n \in \var{A}$ such that $g_s \equiv f^{p^{n+1-s}-p}h_s \mod I^{[p^{n+1-s}]}$, and in particular, $h_n \equiv g_n \mod I^{[p]}$.
By the proof of \cref{condition-induce} and the induction hypothesis, we obtain that $h_{n-1} \in I_{n-1}$, thus we have $u(F_*(h_{n-1}\Delta_1(f^{p-1}))) \in I_n$.
Furthermore, by the construction of $I_2$, we have 
\[
u(F_*(f^{p-1}\Delta_1(f^{p-1})u(F_*h_{n-2}))) \in I_2.
\]
Therefore, by \cref{condition-induce} (2), we have $g_n \in I_n$, as desired.
\end{claimproof}\\
Finally, we prove the assertion.
We have
\begin{align*}
    &\Im(\Hom_{\var{R}}(Q_{R,n}, \var{R}) \xrightarrow{\mathrm{ev}} \var{R}) \\
    \overset{(\star_1)}{=}& \{\varphi(1) \mid \textup{$\varphi \in \Hom_{\var{A}}(Q_{A,n},\var{A})$ satisfying condition (b) in \cref{lift-splitting}}\}\cdot \var{R} \\
    \overset{(\star_2)}{\equiv}& u(F_*I_n)\cdot \var{R} \mod u(F_*I_{n-1})\cdot \var{R},
\end{align*}
where $(\star_1)$ follows from the proof of \cref{lift-splitting} and $(\star_2)$ follows from \cref{cl:fedder1} and $\varphi_{(g_1,\ldots,g_n)}(F_*1)=u(F_*g_n)$.
In conclusion, we have $\sht(R):=\inf\{n \geq 1 \mid I_n \nsubseteq \m^{[p]}\}$, as desired.

Next, we prove assertion (2).
We define the sequence of ideals $\{I^e_n\}$ by $I^e_1:=f^{p-1}u^e(F^e_*f^{p^e-1}\var{A})+I^{[p]}$ and 
\[
I^e_{n}:=u(F_*(I^e_{n-1}\Delta_1(f^{p-1}))+I_1,
\]
inductively.
Then $\{I^e_n\}$ is increasing and $I_n \subseteq I^e_{n+1}$ for every $n,e \geq 1$.
First, we consider the following claim.
\begin{claim}\label{cl:fedder2}
Let $g \in A$ and $n,e \geq 1$.
Then $g \in I^e_n$ if and only if there exists $g_1,\ldots,g_n \in \var{A}$ such that $\varphi_{(g_1,\ldots,g_n)}$ satisfies  condition (b) in \cref{lift-splitting}, there exists $h \in f^{p^{e+n}-1}\var{A}$ with $u^e(F^e_*h) \equiv g_1 \mod I^{[p]}$,  and $g_n \equiv g \mod I^e_{n-1}$, where if $n=1$, we regard $I^e_0$ as $(0)$.
\end{claim}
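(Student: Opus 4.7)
The plan is to adapt the proof of \cref{cl:fedder1} by induction on $n$, carrying along the extra datum that the first component $g_1$ of $\varphi_{(g_1,\ldots,g_n)}$ is hit by $u^e(F^e_*(-))$ from inside $f^{p^{e+n}-1}\var{A}$ modulo $I^{[p]}$. All of the bookkeeping will rest on the elementary identity $f^{p^{e+m}-1} = (f^{p^m-1})^{p^e}\cdot f^{p^e-1}$ together with the projection formula $u^e(F^e_*(b^{p^e}y)) = b\cdot u^e(F^e_* y)$, which combine to give
\[
u^e(F^e_*(f^{p^{e+m}-1}a)) = f^{p^m-1}\cdot u^e(F^e_*(f^{p^e-1}a))
\]
for $a \in \var{A}$ and $m \geq 0$. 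This slide identity lets one freely shift factors of $f^{p^m-1}$ in and out of $F^e_*$ at the cost of changing the power $p^{e+m}-1$ to $p^e-1$.

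For the base case $n=1$, I will unpack a typical element $g = f^{p-1}u^e(F^e_*(f^{p^e-1}a)) + i$ of $I^e_1$, use the slide identity with $m=1$ to rewrite it as $g \equiv u^e(F^e_*(f^{p^{e+1}-1}a)) \mod I^{[p]}$, and take $h := f^{p^{e+1}-1}a$ as the witness; condition (b) of \cref{lift-splitting} for $\varphi_g$ follows from the inclusion $I^e_1 \subseteq I_1 = f^{p-1}\var{A}+I^{[p]}$. The converse is the same calculation read in reverse.

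For the inductive step $n \geq 2$, the $(\Rightarrow)$ direction will mimic the construction in \cref{cl:fedder1}: write $g \equiv u(F_*(g'\Delta_1(f^{p-1}))) \mod I_1$ with $g' \in I^e_{n-1}$, invoke the inductive hypothesis to obtain $(g'_1,\ldots,g'_{n-1})$ together with a witness $h' \in f^{p^{e+n-1}-1}\var{A}$ for $g'_1$, and build $\varphi=\varphi_{(g_1,\ldots,g_n)}$ exactly as in \cref{cl:fedder1} so that $g_s \equiv f^{p^{n-s}(p-1)}g'_s$ modulo the appropriate ideal. The witness for $g_1$ at level $n$ is then $h := f^{p^{e+n-1}(p-1)} h' \in f^{p^{e+n}-1}\var{A}$, using the exponent identity $p^{e+n-1}(p-1) + (p^{e+n-1}-1) = p^{e+n}-1$, and the slide identity gives
\[
u^e(F^e_* h) = f^{p^{n-1}(p-1)} u^e(F^e_* h') \equiv f^{p^{n-1}(p-1)}g'_1 \equiv g_1 \mod I^{[p]}.
\]

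For the $(\Leftarrow)$ direction, I will invoke \cref{condition-induce} to produce $\varphi' = \varphi_{(g'_1,\ldots,g'_{n-1})}$ satisfying (b) with $g_s \equiv f^{p^{n-s}(p-1)}g'_s \mod I^{[p^{n+1-s}]}$, and choose $g'_1 := u^e(F^e_*(f^{p^{e+n-1}-1}a))$ where $h = f^{p^{e+n}-1}a$ is the witness for $g_1$ supplied by hypothesis; the slide identity with $m=n-1$ ensures $f^{p^{n-1}(p-1)}g'_1 \equiv g_1 \mod I^{[p]}$, compatible with the congruence coming from \cref{condition-induce}. The inductive hypothesis applied to $\varphi'$ yields $g'_{n-1} \in I^e_{n-1}$, and substituting into \cref{condition-induce}(2) gives $g_n \in u(F_*(I^e_{n-1}\Delta_1(f^{p-1})))+I_1 = I^e_n$, whence $g \in I^e_n$ since $g_n \equiv g \mod I^e_{n-1}$ and the sequence $\{I^e_\ell\}$ is increasing. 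The main technical obstacle will be verifying that this prescribed choice of $g'_1$ is compatible with the Gorenstein factorization in the proof of \cref{condition-induce}, i.e.\ that one can choose $\varphi'$ inside the ambiguity of the relation $\psi = u(F_*(f^{p-1}\cdot-))\circ \varphi'$ so that its first component takes the specified form without destroying condition (b); I expect this to work because any correction to $g'_1$ that is needed can be absorbed into terms that already lie in $I$ after evaluation on elements of the form $[af_j]$, but making this precise will require careful tracking through the decomposition in \cref{decomposition-W}.
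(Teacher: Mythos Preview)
Your inductive strategy matches the paper's: both argue by induction on $n$, reuse the construction of \cref{cl:fedder1} for the forward direction, and invoke \cref{condition-induce} for the converse. The forward direction and the witness $h = f^{p^{e+n-1}(p-1)}h'$ are handled identically in both.

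For the converse, your ``main technical obstacle'' is self-imposed. The paper does not try to prescribe $g'_1$; it simply takes the $\varphi' = \varphi_{(g'_1,\ldots,g'_{n-1})}$ that \cref{condition-induce} already outputs and checks that this $g'_1$ automatically carries a witness. The check is a colon ideal computation: from \cref{condition-induce} one has $g_1 \equiv f^{p^{n-1}(p-1)}g'_1 \bmod I^{[p^n]}$, and writing $h = f^{p^{e+n}-1}a$ the hypothesis gives $g_1 \equiv u^e(F^e_*h) = f^{p^n - p^{n-1}}\cdot u^e\!\bigl(F^e_*(f^{p^{e+n-1}-1}a)\bigr)$; subtracting and using $(I^{[p^n]}:f^{p^n-p^{n-1}}) = I^{[p^{n-1}]}$ for the regular sequence $f_1,\ldots,f_r$ yields $g'_1 \equiv u^e(F^e_*h') \bmod I^{[p^{n-1}]}$ with $h' := f^{p^{e+n-1}-1}a$. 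The induction hypothesis then applies to $\varphi'$ and gives $g'_{n-1}\in I^e_{n-1}$, hence $h_{n-1}\in I^e_{n-1}$, and \cref{condition-induce}(2) finishes as in \cref{cl:fedder1}. So the obstacle dissolves once you stop trying to adjust $\varphi'$ and instead compute directly.

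One warning: the claim as printed uses the modulus $I^{[p]}$ on the witness condition, but for $n\ge 2$ this is vacuous (both $u^e(F^e_*h)$ and $g_1$ already lie in $I^{[p]}$, since $p^n-1\ge p$ and \cref{condition-induce}(1) forces $g_1\in f^{p^n-p}\var{A}+I^{[p^n]}\subseteq I^{[p]}$), which would make the converse false. The intended modulus is $I^{[p^n]}$, as the paper's forward-direction equality and the colon argument above require; your proposal inherits the printed $I^{[p]}$ and therefore cannot be completed without this correction.
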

\begin{claimproof}
For $n=1$, it follows from definition and the fact that  condition (b) in \cref{lift-splitting} is equivalent to $g_1 \in f^{p-1}\var{A}+I^{[p]}$.
We assume $n \geq 2$.
We take $g \in I^e_n$, then there exists $g' \in I^e_{n-1}$ such that $g-u(F_*(g'\Delta_1(f^{p-1}))) \in I_1$.
By induction hypothesis, there exist $g_1',\ldots,g'_{n-1}$ such that $\varphi_{(g'_1,\ldots,g'_{n-1})}$ satisfies condition (b) in \cref{lift-splitting}, we have $u^e(F^e_*h')=g'_1$ for some $h' \in f^{p^{e+n-1}-1}\var{A}$, and $g'_{n-1} \equiv g' \mod I^e_{n-2}$.
By the proof of \cref{cl:fedder1}, if we put $g_s:=f^{p^{n-s}(p-1)}g'_s$ for every $n-1 \geq s \geq 1$ and 
\[
g_n:=
\begin{cases}
    u(F_*(g'_{n-1}\Delta_1(f^{p-1}))) & \textup{$n=2$ or $p \neq 2$} \\
    u(F_*(g'_{n-1}\Delta_1(f^{p-1})))+u(F_*(f^{p-1}\Delta_1(f^{p-1})u(F_*h_{n-2}))) & \textup{otherwise},
\end{cases}
\]
then $\varphi_{(g_1,\ldots,g_n)}$ satisfies condition (b) \cref{lift-splitting} and $g \equiv g_n \mod I'_n$.
We note that $I_2 \subseteq I^e_n$ if $n \geq 3$.
Furthermore, we have
\[
u^e(F^e_*(f^{p^{e+n}-p^{e+n-1}}h')) =f^{p^n-p^{n-1}}u^e(F^e_*h')=f^{p^n-p^{n-1}}g'_1=g_1,
\]
as desired.

Next, we prove the converse direction, thus
we take $g_1,\ldots,g_n \in \var{A}$ such that the corresponding homomorphism $\varphi:=\varphi_{(g_1,\ldots,g_n)}$ satisfies condition (b) in \cref{lift-splitting} and there exists $h \in f^{p^{e+n}-1}\var{A}$ such that $u^e(F^e_*h) \equiv g_1 \mod I^{[p]}$, then it is enough to show that $g_n \in I^e_n$.
For $n=1$, it follows from the definition of $I^e_1$.
For $n \geq 2$, we take $h_1,\ldots,h_n$ as in \cref{condition-induce}, then the assertion follows from $h_{n-1} \in I^e_{n-1}$ by the induction hypothesis and 
\[
u(F_*(f^{p-1}\Delta_1(f^{p-1})u(F_*h_{n-2}))) \in I_2 \subseteq I^e_n
\]
for $n \geq 3$.
\end{claimproof}

If $\sht(R)=\infty$, then $I_n' \subseteq I_n \subseteq \m^{[p]}$ by (1) for every positive integer $n$, thus we may assume $n:=\sht(R) < \infty$.
We note that $I'=u^e(F^e_*f^{p^e-1}\var{A})$ for some $e \geq 1$ by \cite{Gabber}*{Lemma~13.1}, thus if we take a such $e$, then we have $I'_n=I^e_n$.
First, we assume $R$ is quasi-$(F,F^{\infty})$-split.
Then there exist $\varphi \colon Q_{A,n} \to \var{A}$ satisfying conditions (a), (b) in \cref{lift-splitting} and $h \in f^{p^{e+n}-1}\var{A}$ such that we have
\[
u^{e+n}(F^{e+n}_*(ha^{p^{e}}))=\varphi(0,\ldots,0,a)
\]
for every $a \in \var{A}$.
If $\varphi$ corresponds to $(F^n_*g_1,\ldots,F_*g_n)$, then we have
\[
u^{e+n}(F^{e+n}_*(ha^{p^e}))=u^n(F^n_*(g_1a))
\]
for every $a \in \var{A}$, and in particular, we have $u^e(F^e_*h)=g_1$.
By \cref{cl:fedder2}, we have $g_n \in I^e_n=I'_n$.
Since $\varphi(1)=u(F_*g_n) \notin \m$, we have $I'_n \nsubseteq \m^{[p]}$, as desired.
On the other hand, if $I^e_n=I'_n \nsubseteq \m^{[p]}$, then there exists $\varphi=\varphi_{(g_1,\ldots,g_n)}$ such that $\varphi$ satisfies  condition (b) in \cref{lift-splitting}, there exists $h \in f^{p^{e+n}-1}\var{A}$ such that $u^e(F^e_*h) \equiv g_1 \mod I^{[p]}$, and $u(F_*g_n) \notin \m$ by \cref{cl:fedder2}.
Then $(\varphi,u^{e+n}(F^{e+n}_*h\cdot-)) \colon Q_{A,n} \oplus F^{e+n}_*\var{A} \to \var{A}$ induces the homomorphism $Q_{R,n,e} \to \var{R}$ and $(\varphi,u(F^e_*h\cdot-))(1)$ is a unit, as desired.  
\end{proof}

\bibliographystyle{skalpha}
\bibliography{bibliography.bib}

\end{document}